\newcommand{\mcm}[3]{\newcommand{#1}[#2]{{\ensuremath{#3}}}} 
\mcm{\tuple}{1}{\langle #1 \rangle}
\mcm{\name}{1}{\ulcorner #1 \urcorner}
\mcm{\Nbb}{0}{\mathbb{N}}
\mcm{\Zbb}{0}{\mathbb{Z}}
\mcm{\Rbb}{0}{\mathbb{R}}
\mcm{\Cbb}{0}{\mathbb{C}}
\mcm{\Qbb}{0}{\mathbb{Q}}
\mcm{\Acal}{0}{\cal A}
\mcm{\Bcal}{0}{\cal B}
\mcm{\Ccal}{0}{\cal C}
\mcm{\Dcal}{0}{\cal D}
\mcm{\Ecal}{0}{\cal E}
\mcm{\Fcal}{0}{\cal F}
\mcm{\Gcal}{0}{\cal G}
\mcm{\Hcal}{0}{\cal H}
\mcm{\Ical}{0}{\cal I}
\mcm{\Jcal}{0}{\cal J}
\mcm{\Kcal}{0}{\cal K}
\mcm{\Lcal}{0}{\cal L}
\mcm{\Mcal}{0}{\cal M}
\mcm{\Ncal}{0}{\cal N}
\mcm{\Ocal}{0}{{\cal O}}
\mcm{\Pcal}{0}{{\cal P}}
\mcm{\Qcal}{0}{{\cal Q}}
\mcm{\Rcal}{0}{{\cal R}}
\mcm{\Scal}{0}{{\cal S}}
\mcm{\Tcal}{0}{{\cal T}}
\mcm{\Ucal}{0}{{\cal U}}
\mcm{\Vcal}{0}{{\cal V}}
\mcm{\Wcal}{0}{{\cal W}}
\mcm{\Xcal}{0}{{\cal X}}
\mcm{\Ycal}{0}{{\cal Y}}
\mcm{\Zcal}{0}{{\cal Z}}
\mcm{\Mfrak}{0}{\mathfrak M}
\mcm{\restric}{0}{\upharpoonright}
\mcm{\upset}{0}{\uparrow}
\mcm{\onto}{0}{\twoheadrightarrow}
\mcm{\smallNbb}{0}{{\small \mathbb{N}}}
\DeclareMathOperator{\preop}{op}
\mcm{\op}{0}{^{\preop}}
\newcommand{\theoremize}[2]{\newaliascnt{#1}{thm} \newtheorem{#1}[#1]{#2} \aliascntresetthe{#1}}
\theoremstyle{plain}
\newtheorem{thm}{Theorem}[section]
\theoremstyle{definition}
\theoremstyle{plain}
\title{Embedding simply connected \\ 2-complexes in 3-space\\ \Large II. Rotation systems}
\author{Johannes Carmesin
\medskip 
\\
  {University of Birmingham}
}
\newcommand{\sm}{\setminus}
\DeclareMathOperator{\degree}{deg}
\DeclareMathOperator{\Sbb}{\mathbb{S}}
\newcommand{\Sthree}{$\Sbb^3$}
\newcommand{\scom}{simplicial complex}
\mcm{\Fbb}{0}{\mathbb{F}}
\begin{document}

\maketitle

\begin{abstract}
 We prove that 2-dimensional simplicial complexes whose first homology group is trivial have  
topological embeddings in 3-space if and only if there are embeddings of their link graphs in 
the plane that are compatible at the edges and they are simply connected.
\end{abstract}

\section{Introduction}

This is the second paper in a series of five papers. In the first paper \cite{3space1} of this 
series we give an overview about this series as a whole. In this 
paper we give combinatorial characterisations for when certain simplicial complexes embed in 
3-space. This completes the proof of a 3-dimensional analogue of Kuratowski's characterisation 
of 
planarity for graphs, started in \cite{3space1}.

\vspace{.3cm}

A (2-dimensional) simplicial complex has a topological embedding in 
3-space if and only if it has a piece-wise linear embedding if and only if it has a differential 
embedding \cite{{Bin59},{Hatcher3notes},{Pap43}}.\footnote{However this is not equivalent to having 
a linear embedding, see \cite{Bre83}, and \cite{mtw_hardness} for further references. }
Perelman proved that every compact simply connected 3-dimensional 
manifold is isomorphic to the 3-sphere \Sthree\ \cite{{{Perelman1}, {Perelman3}, 
{Perelman2}}}.  In 
this paper we use Perelman's theorem to prove a combinatorial characterisation of which simply 
connected simplicial complexes can be topologically embedded into $\Sbb^3$ as follows. 

The \emph{link graph} at a vertex $v$ of a simplicial complex is the graph whose vertices 
are the edges incident with $v$ and 
whose edges are the faces incident with $v$ and the incidence relation is as in $C$, see 
\autoref{fig:intro}.    \begin{figure} [htpb]   
\begin{center}
   	  \includegraphics[height=3cm]{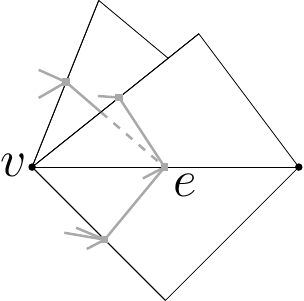}
   	  \caption{The link graph at the vertex $v$ is indicated in grey. The edge 
$e$ projects down to a vertex in the link graph. The faces incident with $e$ 
project down to edges. }\label{fig:intro}
\end{center}
   \end{figure}
Roughly, a \emph{planar rotation system} of a simplicial complex $C$ consists of cyclic 
orientations $\sigma(e)$ of the faces incident 
with each edge $e$ of $C$ such that there are embeddings in the plane of the link graphs 
such that at vertices $e$ 
the cyclic orientations of the incident edges agree with the cyclic orientations $\sigma(e)$.
It is easy to see that if a simplicial complex $C$ has a topological embedding into some oriented 
3-dimensional manifold, then it has a 
planar rotation system. Conversely, for simply connected simplicial complexes the 
existence of 
planar rotation systems is enough to characterise embeddability into \Sthree:

\begin{thm}\label{combi_intro}
 Let $C$ be a simply connected simplicial complex. Then $C$ has a topological embedding into 
$\Sbb^3$ if and only if $C$ has a planar rotation system. 
\end{thm}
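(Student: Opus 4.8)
The ``only if'' direction is the easy one, already indicated in the text: an embedding of $C$ into the oriented manifold $\mathbb{S}^3$ restricts, near each vertex $v$, to a cone over the link graph inside a small ball, so each link graph embeds in the $2$-sphere and hence in the plane, while an orientation of each edge together with the ambient orientation supplies the cyclic orientations $\sigma(e)$ and makes them compatible at the two endpoints of each edge. So the content is the ``if'' direction, and the plan is to manufacture from a planar rotation system a closed simply connected $3$-manifold and then invoke Perelman's theorem.

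First I would use the planar rotation system to build, by hand, a compact piecewise-linear $3$-manifold $M$ with boundary --- a ``regular neighbourhood of $C$'' assembled from local models --- together with an embedding $C\hookrightarrow M$ onto a spine. Concretely: a ball $B_v$ for each vertex $v$, a solid cylinder $e\times D^2$ for each edge $e$, and a slab $f\times[-1,1]$ for each face $f$; around an edge $e$ the slabs of the incident faces are glued to $e\times D^2$ in the cyclic order $\sigma(e)$, and on $\partial B_v$ the attaching regions of the incident cylinders and slabs are laid out following a fixed planar embedding of the link graph $L_v$ whose rotation at each of its vertices $e$ realises $\sigma(e)$. The definition of a planar rotation system is exactly what is needed for all of these gluings to be mutually consistent, so $M$ is a genuine $3$-manifold, $C$ sits inside $M$, and $M$ collapses onto $C$; in particular $\pi_1(M)\cong\pi_1(C)=1$.

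Next, since $\pi_1(M)=1$ the manifold $M$ is orientable, and then Poincar\'e--Lefschetz duality (the ``half lives, half dies'' principle) says that the image of $H_1(\partial M;\mathbb{Q})\to H_1(M;\mathbb{Q})$ has dimension $\tfrac12\dim H_1(\partial M;\mathbb{Q})$; as $H_1(M;\mathbb{Q})=0$ this forces $H_1(\partial M;\mathbb{Q})=0$, so every component of the closed orientable surface $\partial M$ is a $2$-sphere. Capping each of these spheres with a $3$-ball yields a closed $3$-manifold $\widehat M\supseteq M\supseteq C$ with, by van Kampen's theorem, $\pi_1(\widehat M)=\pi_1(M)=1$; by Perelman's theorem $\widehat M$ is homeomorphic to $\mathbb{S}^3$, and the composite $C\hookrightarrow M\subseteq\widehat M\cong\mathbb{S}^3$ is the desired embedding, piecewise linear and hence topological.

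Everything after the construction of $M$ is short, so I expect the main obstacle to be that construction: making the hand-built thickening precise and checking that the purely combinatorial data of a planar rotation system glues the local models into a manifold with no hidden obstruction --- the orientation bookkeeping around each edge and around each vertex is the delicate point --- and verifying that $C\hookrightarrow M$ is a homotopy equivalence. I would also need to deal with finiteness (a compactness argument, or the standing assumption that $C$ is finite) before Perelman's theorem can be applied.
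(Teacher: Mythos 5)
Your proof is correct, but it takes a genuinely different route from the paper's in the one place where the two could diverge. The paper does not thicken $C$: it builds a closed space $T(C,\Sigma)$ directly by gluing, onto each \emph{local surface} $S$ of the combinatorial data $(C,\Sigma)$, a handlebody $\hat S$ of the same genus, and then observes via Van Kampen that $\pi_1(T(C,\Sigma))$ is trivial because $\pi_1(S)\onto\pi_1(\hat S)$ for a handlebody --- so it never needs to know that the boundary pieces are spheres before invoking Perelman. You instead build the regular neighbourhood $M$ with boundary, prove $H_1(\partial M;\Qbb)=0$ by Poincar\'e--Lefschetz duality from $\pi_1(M)\cong\pi_1(C)=1$, conclude every boundary component is a sphere, and cap with balls. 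Both are sound; your duality step buys, without Perelman, exactly the statement the paper only establishes later (\autoref{loc_are_spheres}, proved there by a combinatorial Euler-formula double count over $\Fbb_p$, which is a stronger hypothesis-for-conclusion trade since it works for $p$-nullhomologous rather than simply connected complexes), while the paper's handlebody capping is the more economical path to \autoref{combi_intro} alone. A second difference: your thickening handles disconnected link graphs uniformly (place all components of $L(v)$ disjointly on $\partial B_v$), whereas the paper first reduces to the locally connected case by an induction on cut vertices (\autoref{is_loc_con} and \autoref{block-lem}); your route avoids that reduction. The burden you correctly flag --- verifying that the local models glue to a manifold with $C$ as a spine --- is real but is exactly the amount of work the paper itself spends in \autoref{is_manifold} checking that every point of its glued space has a ball neighbourhood, and the planarity of the rotation system (all link complexes are spheres) is precisely the condition that makes the vertex balls close up; so I see no gap, only the standard finiteness/compactness caveat you already noted before applying Perelman.
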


A related result has been proved by Skopenkov \cite{Skopenkov94}.
The main result of this paper is the following extension of \autoref{combi_intro}. 

\begin{thm}\label{combi_intro_extended}
 Let $C$ be a simplicial complex such that the first homology group $H_1(C,\Fbb_p)$ is trivial for 
some prime $p$. Then $C$ has a topological embedding into \Sthree\ if and only if $C$ is simply 
connected and it has a planar rotation system. 
\end{thm}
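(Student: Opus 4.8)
The plan is to derive \autoref{combi_intro_extended} from \autoref{combi_intro}, the only genuinely new content being a short piece of classical $3$-manifold topology. The backward implication is immediate: if $C$ is simply connected and has a planar rotation system, then \autoref{combi_intro} already yields a topological embedding into $\Sbb^3$, and the hypothesis on $H_1(C,\Fbb_p)$ is not used here at all. For the forward implication, suppose $C$ embeds topologically into $\Sbb^3$. As remarked in the introduction, an embedding into the orientable manifold $\Sbb^3$ produces a planar rotation system, so the real substance of the forward direction is to prove that $C$ is simply connected. I may assume $C$ is connected (the statement concerning each component separately), so it remains to show $\pi_1(C)=1$.

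First I would replace the embedding by a regular neighbourhood. Since for $2$-complexes topological, piecewise-linear and differentiable embeddings coincide (as cited in the introduction), I may take the embedding to be piecewise linear and let $N$ be a regular neighbourhood of $C$ in $\Sbb^3$. Then $N$ is a compact orientable $3$-manifold onto which $C$ deformation retracts, so $\pi_1(N)\cong\pi_1(C)$ and $H_1(N;\Fbb_p)\cong H_1(C;\Fbb_p)=0$; write $S=\partial N$, a closed orientable surface.

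The key step is to show that every component of $S$ is a $2$-sphere, and here I would invoke the ``half-lives-half-dies'' consequence of Poincar\'e--Lefschetz duality, which is valid over any field: for the compact orientable $3$-manifold $N$ one has \[ \dim_{\Fbb_p}\ker\bigl(H_1(S;\Fbb_p)\to H_1(N;\Fbb_p)\bigr)=\tfrac12\dim_{\Fbb_p}H_1(S;\Fbb_p). \] Because $H_1(N;\Fbb_p)=0$, the inclusion-induced map is zero, so its kernel is all of $H_1(S;\Fbb_p)$; comparing dimensions forces $\dim_{\Fbb_p}H_1(S;\Fbb_p)=0$. A closed orientable surface with trivial first homology over $\Fbb_p$ is a disjoint union of $2$-spheres, so $S$ consists of spheres.

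Finally I would use that $\Sbb^3$ is irreducible: by Alexander's theorem every tamely embedded $2$-sphere in $\Sbb^3$ bounds a ball. Capping off the spherical boundary components of $N$ with balls then yields a closed orientable $3$-manifold $\hat N$ with $\pi_1(\hat N)\cong\pi_1(N)$ by van Kampen; since these capping spheres exhibit $\Sbb^3$ as a connected sum having $\hat N$ as one summand, and $\Sbb^3$ is prime, $\hat N\cong\Sbb^3$. Hence $\pi_1(C)\cong\pi_1(N)\cong\pi_1(\Sbb^3)=1$, so $C$ is simply connected, completing the forward direction. I expect the third and fourth steps to be the main obstacle: converting the homological hypothesis $H_1(C,\Fbb_p)=0$ into the geometric statement that the neighbourhood boundary is spherical, and then controlling $N$ inside $\Sbb^3$. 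It is worth noting that this argument needs only Alexander's theorem and duality, not Perelman's theorem; the latter enters the whole scheme only through \autoref{combi_intro}.
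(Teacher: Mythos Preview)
Your argument is correct and takes a genuinely different route from the paper's.

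The paper reduces first to the locally connected case via cut vertices, and then proves the forward implication by going through the intermediate statement that all \emph{local surfaces} of the induced planar rotation system are spheres (\autoref{loc_are_spheres}). That step is established by a combinatorial Euler-formula computation comparing $C$ with its dual complex (\autoref{geq} and \autoref{euler_double_counting}), after which \autoref{is_simply_connected2} yields simple connectedness. The paper also sketches, in \autoref{alg_topo}, an alternative algebraic-topology argument, but that sketch still routes through the local-surface statement, using that $\Sbb^3\setminus\Sbb^1$ has nontrivial first homology.

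Your approach bypasses local surfaces entirely: you pass to a regular neighbourhood $N$, use half-lives-half-dies over $\Fbb_p$ to force $\partial N$ to be a union of spheres, and then invoke Alexander's theorem to identify $\hat N$ with $\Sbb^3$. This is shorter and conceptually clean; it also avoids the cut-vertex reduction, since the regular-neighbourhood argument needs no local-connectedness hypothesis. The trade-off is that you import standard but nontrivial $3$-manifold topology (Poincar\'e--Lefschetz duality in the form of half-lives-half-dies, and Alexander's theorem), whereas the paper's main proof is deliberately elementary and develops the dual-complex and local-surface machinery that is reused in later papers of the series. One small point worth tightening: your phrase ``$\Sbb^3$ is prime'' is doing the work of the statement that any connected summand of $\Sbb^3$ is $\Sbb^3$; this is most directly seen by noting (via Alexander) that each complementary component of $N$ has a single spherical boundary component and is therefore itself a ball, so $\hat N$ is literally $\Sbb^3$ rather than merely a summand of it.
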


This implies characterisations of topological embeddability into \Sthree\ for the classes of 
simplicial complexes with abelian fundamental group and simplicial complexes in general, see 
\autoref{beyond} for details.

The paper is organised as follows. After reviewing some 
elementary definitions in \autoref{basics}, in \autoref{prelims2}, we 
introduce rotation systems, related concepts and prove basic properties of them. In 
Sections \ref{sec4} and \ref{sec5} we prove \autoref{combi_intro}. 
The proof of \autoref{combi_intro_extended} in \autoref{sec6} makes use of \autoref{combi_intro}. 
Further extensions are derived in \autoref{beyond}. In \autoref{non-or} we discuss how one could 
characterise embeddability of 2-complexes in general 3-manifolds combinatorially. 

\section{Basic definitions}\label{basics}

In this short section we recall some elementary definitions that are important for this paper. 

A \emph{closed trail} in a graph is a cyclically ordered sequence $(e_n|n\in \Zbb_k)$ of distinct 
edges $e_n$ such that the 
starting vertex of $e_{n}$ is equal to the endvertex of $e_{n-1}$.
An (abstract) (2-dimensional) \emph{complex} is a graph\footnote{In this paper graphs are allowed 
to have 
parallel edges and loops.} $G$ together with a 
family of 
closed trails in $G$, called the \emph{faces} of the complex.
We denote complexes $C$ by triples $C=(V,E,F)$, where $V$ is the set of \emph{vertices}, $E$ the 
set of \emph{edges} and $F$ the set of 
faces. We assume furthermore that every vertex of a complex is incident with an edge and every edge 
is incident with a face. 
The \emph{1-skeleton} of a complex $C=(V,E,F)$ is the graph $(V,E)$.
A \emph{directed} complex is a complex together with a choice of 
direction at each of its edges and a choice of orientation at each of its faces.
For an edge $e$, we denote the direction chosen at $e$ by $\vec{e}$. 
For a face $f$, we denote the orientation chosen at $f$ by $\vec{f}$. 

Examples of complexes are (abstract) (2-dimensional) simplicial complexes.
In this paper all simplicial complexes are directed -- although we will not always say it 
explicitly. A \emph{(topological) embedding} of a simplicial complex $C$ into a topological space 
$X$ is an injective 
continuous map from (the geometric realisation of) $C$ into $X$.
In our notation we suppress the embedding map and for example write `$\Sbb^3\sm C$' for 
the topological space obtained from 
$\Sbb^3$ by removing all points in the image of the embedding of $C$. 

In this paper, a \emph{surface} is a compact 2-dimensional manifold (without boundary)\footnote{We 
allow surfaces to be disconnected. }.
Given an embedding of a graph in an oriented surface, the \emph{rotation system} at a vertex $v$ is 
the cyclic orientation\footnote{A \emph{cyclic orientation} is a choice of one of the two 
orientations 
of a cyclic ordering.}  of the edges incident with $v$ given by `walking around' $v$ in the 
surface in a small circle in the direction of the orientation. 
Conversely, a choice of rotation system at each vertex of a graph $G$ defines an embedding of $G$ 
in an oriented surface as explained in \cite{3space1}.

A \emph{cell complex} is a graph $G$ together with a set of directed walks such that each 
direction of an edge of $G$ is in precisely one of these directed walk es. These directed walks are 
called the \emph{cells}. The geometric realisation of a cell complex is obtained from (the 
geometric realisation of) its graph by gluing discs so that the cells are the boundaries of these 
discs. The geometric realisation is always an oriented surface. 
Note that cell complexes need 
not be complexes as cells are allowed to contain both directions of an edge. 
The \emph{rotation system of} a cell complex $C$ is the rotation system of the graph of $C$ in the 
embedding in the oriented surface given by $C$.

\section{Rotation systems}\label{prelims2}

In this section we introduce rotation systems of complexes and some related concepts.

The \emph{link graph} of a \scom\ $C$ at a vertex $v$ is the graph whose vertices are the 
edges incident with $v$.
The edges are the faces incident\footnote{A face is incident with a vertex if 
there is an edge incident with both of them.} with $v$. 
The two endvertices of a face $f$ are those vertices corresponding to the two 
edges of $C$ incident with $f$ and $v$. 
We denote the link graph at $v$ by $L(v)$.

A \emph{rotation system} of a directed complex $C$ consists of for each edge $e$ of $C$ a cyclic 
orientation\footnote{If the edge $e$ is only incident with a single face, then $\sigma(e)$ is 
empty.} $\sigma(e)$ of the faces incident 
with $e$. 

Important examples of rotation systems are those \emph{induced} by topological embeddings of
2-complexes $C$ into \Sthree\  (or more generally in some oriented 3-manifold); here for an edge 
$e$ 
of $C$, 
the cyclic orientation $\sigma(e)$ of the 
faces incident with $e$ is the ordering in which we see the faces when walking around some 
midpoint of $e$ in a circle of small radius\footnote{Formally this means that the circle 
intersects each face in a single point and that it can be contracted onto the chosen midpoint of 
$e$ in such a way that the image of one such contraction map intersects each face in an 
interval.} -- in the direction of the orientation of \Sthree. It can be shown that $\sigma(e)$ is 
independent of the chosen circle if small enough and of the chosen midpoint. 

Such rotation systems have an additional property:
let  $\Sigma=(\sigma(e)|e\in E(C))$ be a rotation system of a simplicial complex $C$ induced by 
a topological embedding of $C$ in the 3-sphere.
Consider a 2-sphere of small diameter around a vertex $v$. We 
may assume that each edge of $C$ intersects this 2-sphere in at most 
one point and that each face intersects it in an interval or not at all. The 
intersection of the 2-sphere and $C$ is a graph: the link graph at $v$. Hence link 
graphs of 2-complexes with rotation systems induced from embeddings in 3-space must always be 
planar. And even more: the 
cyclic orientations $\sigma(e)$ at the edges of $C$ form -- when projected down to a link graph 
to rotators at the vertices of the link graph -- a rotation system at the link graph, see 
\autoref{fig:link_project}. 

   \begin{figure} [htpb]   
\begin{center}
   	  \includegraphics[height=3cm]{./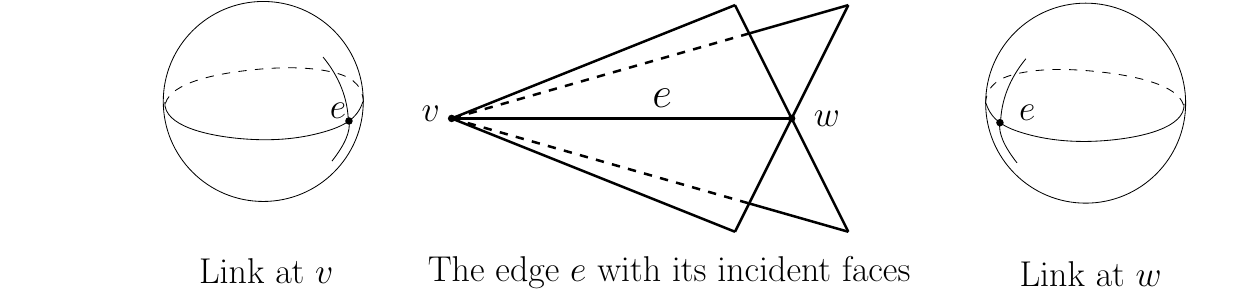}
   	  \caption{The cyclic orientation $\sigma(e)$ of the faces incident with the edge $e$ of 
$C$ projects down to rotators at $e$ in the link graphs at either endvertex of $e$. In these link 
graphs, the projected rotators at $e$ are reverse. 
}\label{fig:link_project}
\end{center}
   \end{figure}

Next we shall define `planar rotation systems' which roughly are rotation systems satisfying 
such an additional property.  
The cyclic orientation $\sigma(e)$ at the edge $e$ of a rotation system defines a rotation 
system $r(e,v, \Sigma)$ at each vertex $e$ of 
a link graph $L(v)$: if the directed edge $\vec{e}$ is directed towards $v$ we take $r(e,v, 
\Sigma)$ to be $\sigma(e)$. 
Otherwise we take the inverse of $\sigma(e)$. As explained in \autoref{basics}, this 
defines an embedding of the link graph into an oriented surface. The 
\emph{link complex} for 
$(C,\Sigma)$ at the vertex $v$ is the cell complex obtained from the link graph $L(v)$ 
by 
adding the faces of the above 
embedding of $L(v)$ into the oriented surface.
By definition,  the geometric realisation of the link complex is 
always a surface. To shortcut notation, we will not distinguish between the 
link complex and its geometric realisation and just say things like: 
`the link complex is a sphere'. 
A \emph{planar rotation system} of a directed simplicial complex $C$ is a rotation system such that 
for 
each vertex $v$ all link complexes are spheres -- or disjoint unions of spheres (if the link graph 
is not connected). 
The paragraph before shows the following. 

\begin{obs}\label{obo}
Rotation systems induced by topological embeddings  of locally connected\footnote{
\autoref{obo} is also true without the assumption of `local connectedness'. In that 
case however 
the link complex is 
disconnected. Hence it is no longer directly given by the drawing of the link graph on a 
ball of small radius as above.} simplicial complexes in the 3-sphere are planar. 
 \qed
\end{obs}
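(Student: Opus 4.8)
The plan is to make precise the informal argument in the paragraph preceding the statement. Fix a topological embedding of a locally connected simplicial complex $C$ in $\Sbb^3$ and let $\Sigma=(\sigma(e)\mid e\in E(C))$ be the rotation system it induces; by local connectedness each link graph $L(v)$ is connected, so we only have to show that for every vertex $v$ the link complex for $(C,\Sigma)$ at $v$ is a single sphere.

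First I would fix $v$ and, exactly as in the paragraph above, choose a $2$-sphere $S_v$ of small diameter around $v$ meeting each edge of $C$ in at most one point and each face in at most an interval (legitimate by the same smallness considerations used to define $\sigma$; if one prefers to avoid wild embeddings one may first pass to a piecewise-linear embedding, as recalled in the introduction). Then $S_v\cap C$ is a graph drawn on $S_v$ whose vertices are the punctures of $S_v$ by the edges of $C$ at $v$, whose edges are the arcs in which the faces of $C$ at $v$ meet $S_v$, and with the incidences of $C$; that is, $S_v\cap C$ is a plane drawing of $L(v)$.

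Next I would verify that the rotation system of this drawing of $L(v)$ on $S_v$, where $S_v$ carries the orientation it inherits as the boundary of the small ball around $v$ inside the oriented manifold $\Sbb^3$, agrees with the rotation system $r(\cdot,v,\Sigma)$ used in the definition of the link complex. Walking in a small circle along $S_v$ around a vertex $e$ of this drawing meets the faces at $e$ and $v$ in some cyclic order; contracting that circle towards the point $e\cap S_v$ and sliding it along $e$ towards a midpoint of $e$ identifies this cyclic order with $\sigma(e)$ when $\vec e$ is directed towards $v$ and with its inverse otherwise -- which is the definition of $r(e,v,\Sigma)$. I expect this step -- checking that the two orientation conventions genuinely match rather than being uniformly reversed, which is the formal content of \autoref{fig:link_project} -- to be the only delicate point; the rest is bookkeeping of definitions.

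It follows that the embedding of $L(v)$ into an oriented surface determined by $r(\cdot,v,\Sigma)$ is precisely the drawing of $L(v)$ on $S_v$. Since $L(v)$ is connected, this drawing on the $2$-sphere $S_v$ is $2$-cell embedded, so the faces added to $L(v)$ to form the link complex are exactly the disc faces of that drawing and the geometric realisation of the link complex is $S_v$, a sphere. As $v$ was arbitrary, $\Sigma$ is a planar rotation system.
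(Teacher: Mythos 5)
Your proposal is correct and takes essentially the same route as the paper: the paper states this as an observation with no separate proof, relying on the small-sphere argument in the immediately preceding paragraph (intersect $C$ with a small $2$-sphere around $v$ to obtain a drawing of $L(v)$ whose rotators are the projections of the $\sigma(e)$, as in \autoref{fig:link_project}). You merely make explicit the two points the paper leaves implicit, namely the matching of orientation conventions and the fact that a connected graph drawn on a $2$-sphere is $2$-cell embedded, so its link complex is that sphere.
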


Next we will define the \emph{local surfaces of a topological embedding} of a simplicial complex 
$C$ into \Sthree. 
The local surface at a connected component of $\Sbb^3\sm C$ is the following. 
Pour concrete into this connected component. 
The surface of the concrete is a 2-dimensional manifold. 
The local surface is the simplicial complex drawn at the surface by the vertices, edges and faces 
of $C$.
Note that if an edge $e$ of $G$ is incident with more than two faces that are on the 
surface, then the surface will contain at least two clones of the edge $e$, see 
\autoref{fig:loc_surf}. 

   \begin{figure} [htpb]   
\begin{center}
   	  \includegraphics[height=3cm]{./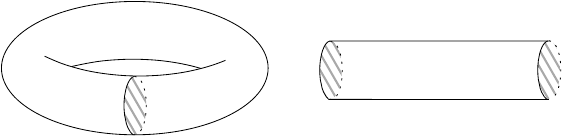}
   	  \caption{On the left we depicted the torus with an additional face 
attached on a genus reducing curve in the inside. On the right we depicted the 
local surface of its inside component. It is a sphere and contains two 
copies of the newly added face (and its incident edges).}\label{fig:loc_surf}
\end{center}
   \end{figure}

Now we will define \emph{local surfaces for a pair $(C,\Sigma)$} consisting of a complex 
$C$ and one of its rotation systems 
$\Sigma$. 
\autoref{topo_to_combi} below says that under fairly general circumstances the local surfaces of a 
topological embedding are the local 
surfaces of the rotation system induced by that topological embedding. The set of faces of 
a local surface will be an equivalence 
class of the set of orientations of faces of $C$.
The \emph{local-surface-equivalence relation} is the symmetric transitive closure of the following 
relation.
An orientation $\vec f$ of a face $f$ is \emph{locally related} via an edge $e$ of 
$C$ to an orientation $\vec{g}$ of a face $g$ if $f$ is just before $g$ in $\sigma(e)$ and 
$e$ 
is traversed positively by $\vec f$ and 
negatively by $\vec g$ and in $\sigma(e)$ the 
faces $f$ and $g$ are adjacent. 
Here we follow the convention that if the edge $e$ is only incident with a single face, then the 
two orientations of that face 
are related via $e$. 
Given an equivalence class of the local-surface-equivalence relation, the \emph{local surface} at 
that equivalence class is the following 
 complex whose set of faces is (in bijection with) the set of orientations 
in that equivalence class. 
We obtain the complex from the disjoint union of the faces of these orientations by 
gluing together two of these faces $f_1$ and $f_2$ along two of their edges if these edges are 
copies 
of the same 
edge $e$ of $C$ and 
$f_1$ and $f_2$ are related via $e$. Of course, we glue these two edges in a 
way that endvertices are identified only with copies of the same vertex of $C$.
Hence each  edge of a local surface is incident with precisely two faces. Hence 
its 
geometric realisation is always a 
is a surface. Similarly as for link complexes, we shall just say things like `the 
local surface is a sphere'. 
\begin{obs}\label{loc_is_con}
Local surfaces of planar rotation systems are always connected.
\qed
\end{obs}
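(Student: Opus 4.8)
The plan is to read the statement off directly from the way a local surface is assembled from an equivalence class of the local-surface-equivalence relation. Recall that the local surface at such an equivalence class $K$ has, by construction, exactly one face --- a disc bounded by the corresponding closed trail of $C$ --- for each orientation in $K$, and is obtained from the disjoint union of these discs by gluing two of them along copies of an edge $e$ of $C$ precisely when the two corresponding orientations are locally related via $e$. Thus the set of faces of the local surface is, by definition, a single equivalence class of the local-surface-equivalence relation, and the task is to convert this bookkeeping fact into a topological connectivity statement.

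First I would note the elementary point that whenever two orientations $\vec f,\vec g\in K$ are locally related via some edge $e$, the discs representing them share an edge in the local surface --- a common copy of $e$ --- and therefore lie in one connected component of it; the degenerate case of an edge incident with a single face is handled by the stated convention that the two orientations of that face are related via $e$. Next I would invoke the definition of the local-surface-equivalence relation as the symmetric transitive closure of ``locally related'': any two orientations of $K$ are hence joined by a finite chain $\vec f=\vec h_0,\vec h_1,\dots,\vec h_n=\vec g$ in which consecutive members are locally related via some edge of $C$. Applying the previous point along the chain shows that the discs of $\vec h_0,\dots,\vec h_n$ lie in a common component; since $\vec f,\vec g\in K$ were arbitrary, all faces of the local surface lie in a single connected component. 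Finally, because the local surface is the union of these discs together with their boundary edges and vertices --- and every edge and vertex of it lies on some such disc --- it is connected.

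I do not expect any genuine obstacle: once the definitions are unwound the conclusion is immediate, which is presumably why it is recorded as an observation without proof. The only care needed is bookkeeping: the relation ``locally related via $e$'' as written is directed (through the phrases ``just before'' and ``traversed positively'' / ``traversed negatively''), so the gluing rule should be read in its symmetric form; and a single face of $C$ may contribute two discs to the local surface, or meet an edge of $C$ several times, but neither affects the argument. It is worth remarking that planarity of the rotation system plays no role in this proof --- the point of the observation is the contrast with the local surfaces of a topological embedding (the surfaces obtained by ``pouring concrete'' into a component of $\Sbb^3\sm C$), which bound a single connected region yet can be disconnected, whereas the combinatorially defined local surface of any rotation system is connected by the argument above.
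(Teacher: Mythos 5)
Your proof is correct and is precisely the definitional unwinding that the paper leaves implicit (the observation is stated with only a \verb|\qed|): the faces of a local surface form a single class of the symmetric transitive closure of ``locally related'', each locally related pair of orientations contributes two discs sharing a glued copy of an edge, and every point of the local surface lies on the closure of one of these discs. Your side remarks --- that planarity of $\Sigma$ is not actually used, and that the single-face degenerate case is covered by the stated convention --- are also accurate.
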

A \emph{(2-dimensional) orientation} of a complex $C$ such that each edge is in precisely two faces 
is a choice of orientation of each 
face of $C$ such that each edge is traversed in opposite directions by the chosen orientation of 
the two incident faces.
Note that a complex whose geometric realisation is a surface has an orientation if and only its 
geometric realisation is orientable. 

\begin{obs}\label{loc_is_orientable}
The set of orientations in a local-surface-equivalence class defines an orientation of its local 
surface. 

In particular, local surfaces are cell complexes. 
\qed
\end{obs}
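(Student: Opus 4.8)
The plan is to unwind the construction of the local surface and to observe that the local-surface-equivalence relation was set up precisely so that two faces glued along a common edge induce opposite orientations on that edge. Fix an equivalence class $\Ocal$ of the local-surface-equivalence relation and write $S$ for the corresponding local surface. By construction every face of $S$ is a copy of some face $f$ of $C$ that comes equipped with a distinguished orientation $\vec f\in\Ocal$; to this face of $S$ I assign the orientation $\vec f$. The claim to prove is that this assignment is a 2-dimensional orientation of $S$, i.e.\ that every edge of $S$ is traversed in opposite directions by the orientations of its two incident faces.

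So let $e'$ be an edge of $S$. By the construction of $S$ it is incident with precisely two faces, and these are copies of faces $f_1,f_2$ of $C$ carrying orientations $\vec{f_1},\vec{f_2}\in\Ocal$, where $e'$ arises by gluing a copy of an edge $e$ of $C$ inside $f_1$ to a copy of $e$ inside $f_2$ and $f_1,f_2$ are related via $e$; moreover the gluing is performed so that the two endvertices of $e'$ are the identified copies of the two endvertices of $\vec e$. Consequently the direction $\vec e$ of $e$ in $C$ induces a direction $\vec{e'}$ on $e'$, and for $i=1,2$ the orientation $\vec{f_i}$ traverses $e'$ along $\vec{e'}$ exactly when $\vec{f_i}$ traverses $e$ positively. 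By the definition of ``locally related via $e$'' -- or, when $e$ is incident with only one face of $C$, by the stated convention -- exactly one of $\vec{f_1},\vec{f_2}$ traverses $e$ positively while the other traverses $e$ negatively. Hence the two faces of $S$ incident with $e'$ run through $e'$ in opposite directions. As $e'$ was arbitrary, the assignment is a 2-dimensional orientation of $S$, which is the first assertion.

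For the ``in particular'' clause: each face of the complex $S$ is a closed trail, and coupling a closed trail with its assigned orientation turns it into a directed closed walk. Since each edge $e'$ of $S$ lies on exactly two faces and, by the orientation constructed above, these two faces traverse $e'$ in opposite directions, each of the two directions of $e'$ occurs in exactly one of the associated directed walks. Thus the oriented faces of $S$ exhibit $S$ as a cell complex.

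This is essentially a definition chase, so I do not expect a genuine obstacle. The only point that needs a little care is the bookkeeping in the second paragraph: one must check that ``traversed positively in $\sigma(e)$'' agrees with ``traversed along the glued edge $\vec{e'}$'', which amounts to the gluing in the construction of $S$ being compatible with the chosen direction $\vec e$ of $e$ in $C$; and one must keep in mind the degenerate cases covered by the conventions (an edge of $C$ incident with a single face, and the two glued faces of $S$ being copies of the same face of $C$ carrying opposite orientations), although in each of these the opposite-traversal conclusion still holds.
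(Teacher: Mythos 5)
Your argument is correct and is exactly the definition chase the paper has in mind: the observation is stated with no written proof precisely because ``locally related via $e$'' forces one of the two glued orientations to traverse $e$ positively and the other negatively (with the single-face convention covering the degenerate case), which is the definition of a 2-dimensional orientation and immediately yields the cell-complex property. Your careful handling of the gluing direction and the degenerate cases is fine but adds nothing beyond what the paper treats as immediate.
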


We will not use the following lemma in our proof of \autoref{combi_intro} 
 However, we think that it gives a 
better intuitive understanding of local surfaces.  We say that a simplicial complex $C$ is 
\emph{locally connected} if all link graphs are connected. 

\begin{lem}\label{topo_to_combi}
 Let $C$ be a connected and locally connected complex embedded into $\Sbb^3$ and let $\Sigma$ be 
the induced planar rotation system. 
 Then the local surfaces of the topological embedding are equal to the local surfaces for 
$(C,\Sigma)$.\qed
\end{lem}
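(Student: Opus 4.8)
Both the local surfaces of the embedding and the local surfaces for $(C,\Sigma)$ are assembled, by gluing along edges, from a set of faces which is a set of orientations of faces of $C$: for the topological local surfaces, an orientation of a face $f$ is the same datum as a choice of one of the two local sides of $f$ in $\Sbb^3$, via the orientation of $\Sbb^3$. The plan is to show that the two partitions of the set of all orientations of faces of $C$ into such face-sets coincide and that the prescribed gluings agree. The key local observation is around an edge $e$: walking around the midpoint of $e$ in the direction of $\Sbb^3$, between two faces $f,g$ consecutive in $\sigma(e)$ there is a wedge-shaped region lying in a single component of $\Sbb^3\sm C$, and one checks directly against the orientation of $\Sbb^3$ that the side of $f$ bounding this wedge is the one given by the orientation of $f$ traversing $e$ positively, while the side of $g$ is the one given by the orientation traversing $e$ negatively (an edge incident with a single face giving the analogous degenerate picture, consistent with the stated convention). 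Hence $\vec f$ is locally related to $\vec g$ via $e$ precisely when their associated sides bound a common wedge at $e$; in particular, writing $\Phi(\vec f)$ for the component of $\Sbb^3\sm C$ on the $\vec f$-side of $f$, the map $\Phi$ is constant on local-surface-equivalence classes. It is also onto the set of components of $\Sbb^3\sm C$: if some component $R$ had no interior point of a face of $C$ on its frontier, then the frontier of $R$ would lie in the 1-skeleton $G$ of $C$, so $R$ would be a non-empty clopen subset of $\Sbb^3\sm G$; since $\Sbb^3$ minus a graph is connected this would force $R=\Sbb^3\sm G$, which is impossible because $C$ has a face.

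It remains to see that $\Phi$ is injective on classes, i.e.\ that any two orientations $\vec f,\vec g$ with $\Phi(\vec f)=\Phi(\vec g)=R$ are joined by a chain of local relations. Pick a path in $R$ from a point just off the interior of $f$ on the $\vec f$-side to a point just off the interior of $g$ on the $\vec g$-side. Since a small sphere about a vertex $v$ meets $C$ exactly in the link graph, a punctured neighbourhood of $v$ inside $\Sbb^3\sm C$ is homeomorphic to a half-open interval times the complement of the link graph in that sphere; hence the portions of the path near vertices may be pushed out of small balls about the vertices while staying in $R$. The path may then be taken to skirt $C$, so that it visits a sequence of face-sides in which consecutive members bound a common wedge at an edge --- and are therefore locally related --- and so that its passages near a vertex $v$ reduce, by travelling around the boundary walk of the relevant face of the link complex at $v$, to further chains of local relations. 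Reading this off gives $\vec f\sim\vec g$. Consequently the local-surface-equivalence classes are exactly the fibres of $\Phi$, and thus are in bijection with the components of $\Sbb^3\sm C$.

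Finally, fix a component $R$ and consider the local surface $S_R$ of the embedding at $R$. Its set of faces is $\Phi^{-1}(R)$; it has one copy of an edge $e$ of $C$ for each wedge of $R$ along $e$; by the local picture above it glues two of its faces along such an edge-copy exactly when the corresponding orientations are locally related via $e$; and around each vertex $v$ the edge-copies fit together in the cyclic pattern given by the faces of the link complex at $v$, which by construction of the induced rotation system is exactly the local structure of $S_R$ near $v$, so that endvertices are identified as that pattern dictates. This is precisely the recipe defining the local surface for $(C,\Sigma)$ at the class $\Phi^{-1}(R)$, so the two local surfaces coincide. The step I expect to be the main obstacle is the injectivity claim: turning connectedness of a complementary component into an explicit chain of local relations, and in particular making the treatment of paths near the vertices of $C$ rigorous.
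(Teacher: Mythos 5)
The paper does not actually supply a proof of \autoref{topo_to_combi}: the lemma is stated with a closing qed box and the surrounding text explains that it is not used in the proof of \autoref{combi_intro} and is included only for intuition. So there is nothing to compare your argument against; it has to stand on its own. As a sketch it is sound, and it is the natural argument: the map $\Phi$ sending an orientation $\vec f$ to the complementary component on the corresponding side of $f$, the wedge picture at an edge showing that locally related orientations bound a common wedge, the clopen argument for surjectivity, and the path-pushing argument for injectivity are all correct in outline, as is the final matching of the gluing data.

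Two points deserve to be made explicit in a full write-up. First, you never say where the hypothesis of local connectedness enters, and it is exactly the hypothesis that makes the lemma true (a wedge of two spheres at a vertex gives four combinatorial local surfaces but only three complementary components). It is used precisely in your vertex step: a passage of the path through a small ball around $v$ stays in one complementary region $F$ of the link graph drawn on the small sphere $S_v$, and you need the entry and exit face-sides of that passage to be joined by a walk along the boundary of $F$. This works because a \emph{connected} graph embedded in the sphere has every face an open disc bounded by a single closed facial walk, and that walk is exactly a face of the combinatorial link complex (the rotation system at $v$ being, by definition of the induced $\Sigma$, the one read off from the embedding in $S_v$); if the link graph were disconnected, $F$ could have disconnected boundary and the chain of local relations would not exist. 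Second, the sign bookkeeping in the claim that the two sides bounding a wedge at $e$ are precisely a locally related pair (positive traversal for the predecessor in $\sigma(e)$, negative for the successor, consistently with the chosen direction $\vec e$ and the orientation of $\Sbb^3$) is the one place where an error of convention could silently break the argument, so it should be checked once carefully rather than asserted. Neither point is a gap in the plan, but both are where the remaining work lies; you have correctly identified the injectivity step as the substantive one.
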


There is the following relation between vertices of local surfaces and faces of link 
complexes.

\begin{lem}\label{loc_inc_AND_loc_surfaces}
Let $\Sigma$ be a rotation system of a simplicial complex $C$. 
There is a bijection $\iota$ between the set of vertices of local 
surfaces for $(C,\Sigma)$ and the set of faces of link 
complexes for $(C,\Sigma)$, which maps each vertex $v'$ of a local surface cloned from the vertex 
$v$ of $C$ to a face $f$ of the link complex at $v$ such that the rotation system at $v'$ is an 
orientation of $f$.
\end{lem}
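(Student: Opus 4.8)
\emph{Proof plan.} The plan is to prove the statement locally at each vertex $v$ of $C$: I will exhibit both the faces of the link complex at $v$ and the vertices of local surfaces that are cloned from $v$ as the orbits of one and the same relation on the set $D_v$ of orientations of faces of $C$ incident with $v$, and then let $\iota$ be the disjoint union of these local bijections over all vertices $v$. Throughout I use that $C$ is simplicial, so that each face of $C$ is a triangle meeting each incident vertex in a single point; in particular every $\vec g\in D_v$ determines a unique copy of $v$ on $g$.

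On the link side, observe first that for a simplicial complex an edge $g$ of the link graph $L(v)$ joins the two edges of $C$ incident with both $v$ and the triangle $g$, so that choosing an orientation of $g$ is the same as choosing a direction of the corresponding edge of $L(v)$; hence $D_v$ is exactly the set of darts of $L(v)$. By construction the link complex at $v$ is the cell complex of the embedding of $L(v)$ into the oriented surface determined by the rotation system $(r(e,v,\Sigma))_{e}$, so its faces are precisely the orbits on $D_v$ of the associated face-tracing permutation $\pi_v$.

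On the local-surface side, every $\vec g\in D_v$ lies on a unique local surface $S$, where the copy of $v$ carried by $g$ is a vertex $v'(\vec g)$ of $S$ cloned from $v$. I would then check that $v'(\vec{g_1})=v'(\vec{g_2})$ if and only if $\vec{g_1}$ and $\vec{g_2}$ are joined by a chain in $D_v$ whose consecutive members are locally related via an edge of $C$ incident with $v$: the copies of $v$ in $S$ get identified exactly along the gluings of $S$, a gluing identifies two copies of $v$ precisely when it is performed along a copy of an edge of $C$ incident with $v$, and by definition such gluings join faces that are locally related via that edge. Since the gluings respect orientations, going once around $v'$ returns the same orientation (this is the content of \autoref{loc_is_orientable}), so the fibre of $v'(\cdot)$ over $v'$ closes up into a single cycle $(\vec{g_1},e_1,\vec{g_2},e_2,\dots,\vec{g_k},e_k)$, where $e_i$ is the edge along which $\vec{g_i}$ is locally related to $\vec{g_{i+1}}$ and $e_{i-1},e_i$ are the two edges of $C$ at $v$ lying in $g_i$. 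Thus the vertices of local surfaces cloned from $v$ are the orbits on $D_v$ of the relation ``locally related via an edge at $v$''.

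It remains to identify these two relations on $D_v$, and this is where I expect the real work — and the main obstacle — to lie: it is pure bookkeeping of orientation conventions. The definition of $r(e,v,\Sigma)$ (equal to $\sigma(e)$ when $\vec e$ points towards $v$ and to its reverse otherwise), the requirement in ``locally related via $e$'' that $e$ be traversed positively by the first face and negatively by the second, and the standard face-tracing rule are calibrated precisely so that ``$g_i$ is just before $g_{i+1}$ in $\sigma(e_i)$ with the matching traversal signs'' is equivalent to ``$\pi_v$ sends the dart of $\vec{g_i}$ to the dart of $\vec{g_{i+1}}$'', the inversions hidden in the first two definitions cancelling against the reversal built into face-tracing. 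Once this is checked the two orbit-partitions of $D_v$ coincide, yielding $\iota$ at $v$; and comparing the cyclic order of the edges $e_1,\dots,e_k$ around the vertex $v'$ of the cell complex $S$ with the cyclic sequence of vertices of $L(v)$ traversed by the closed walk $f=\iota(v')$ shows that the rotation system at $v'$ is an orientation of $f$. A little extra care is needed for the degenerate conventions already flagged in the text — edges of $C$ incident with a single face (where $\sigma(e)$ is empty and the two orientations of that face count as related) and disconnected link graphs — but these affect none of the orbit counts.
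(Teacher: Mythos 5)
Your proposal is correct and follows essentially the same route as the paper: both arguments partition the set of orientations of faces incident with a vertex $v$ in two ways --- once via the faces of the link complex at $v$ (your face-tracing orbits, the paper's ``$v$-equivalence classes'') and once via the identifications of copies of $v$ in local surfaces --- and observe that the two partitions coincide because both are generated by the relation ``locally related via an edge at $v$''. You are in fact slightly more careful than the paper in restricting to edges incident with $v$ and in isolating the orientation-convention check that makes the two partitions agree, which the paper asserts without proof; the remaining verification is left as routine in both versions.
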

\begin{proof}
The set of faces of the link complex at $v$ is in bijection with the set of 
$v$-equivalence classes; here the \emph{$v$-equivalence relation} on the set of orientations of 
faces of $C$ 
incident with $v$ is 
the symmetric transitive closure of the relation `locally related'. Since we work in a subset of 
the orientations, every $v$-equivalence 
class is contained in a local-surface-equivalence class. 
On the other hand the set of all clones of a vertex $v$ of $C$ contained in a local surface $S$ is 
in bijection with the set of 
$v$-equivalence classes contained in the local-surface-equivalence class of $S$. 
This defines a bijection $\iota$ between the set of vertices of local 
surfaces for $(C,\Sigma)$ and the set of faces of link 
complexes for $(C,\Sigma)$.

It is straightforward to check that $\iota$ has all the properties claimed in the lemma. 
\end{proof}

\begin{cor}\label{cor7}
Given a local surface of a simplicial complex $C$ and one of its vertices $v'$ cloned from a 
vertex 
$v$ of $C$, there is a homeomorphism from a neighbourhood around $v'$ in the local surface to the 
cone with top $v'$
over the face boundary of $\iota(v')$ that fixes $v'$ and the edges and faces incident with 
$v'$ in a neighbourhood around $v'$.
\qed
\end{cor}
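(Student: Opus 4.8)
The plan is to read the local structure of the local surface at $v'$ straight off the cell-complex description and identify it with the cone over the face boundary of $\iota(v')$. Recall first the standard local picture of a vertex in a surface that is a cell complex: a sufficiently small neighbourhood $N$ of $v'$ is the union of the near-$v'$ portions of the edges and faces incident with $v'$, and these portions fit together in the alternating cyclic pattern recorded by the rotation system at $v'$. Writing this cyclic sequence as $e_0, f_1, e_1, f_2, \dots , f_k$ with indices modulo $k$ (where an edge or a face of $C$ may occur more than once, once for each of its clones at $v'$), $N$ is obtained from $k$ half-discs, one per occurrence $f_i$, by gluing the side of the $f_i$-half-disc on the $e_i$-end to the side of the $f_{i+1}$-half-disc on the $e_i$-end, all half-discs sharing the centre $v'$. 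That is precisely the cone with top $v'$ over the cycle whose vertices are the $e_i$ and whose edges are the $f_i$, with the half-discs corresponding to the cone triangles and the shared straight segments to the cone segments over the vertices $e_i$.

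Second, I identify that cycle with the face boundary of $\iota(v')$. By \autoref{loc_is_orientable} the local surface is a cell complex, so $v'$ carries a well-defined rotation system, and by \autoref{loc_inc_AND_loc_surfaces} this rotation system is an orientation of the face $f:=\iota(v')$ of the link complex at $v$. Unravelling definitions: the vertices of $L(v)$ are the edges of $C$ at $v$ and the edges of $L(v)$ are the faces of $C$ at $v$, and by the construction of the local surface via the local-surface-equivalence relation the clones at $v'$ of such an edge (respectively face) of $C$ are in bijection with its occurrences along the boundary walk of $f$. Hence, occurrence for occurrence and in the same cyclic order, the alternating sequence $e_0, f_1, \dots, f_k$ of edge- and face-clones around $v'$ from the first step is exactly the face boundary walk of $\iota(v')$ in $L(v)$.

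Combining the two steps, the desired homeomorphism is the one sending each cone triangle of the cone over the face boundary of $\iota(v')$ identically onto the near-$v'$ portion of the corresponding face of the local surface (and each cone segment onto the corresponding portion of the incident edge, and the cone top to $v'$); by the first step this is a well-defined homeomorphism onto a neighbourhood of $v'$, and it fixes $v'$ together with the edges and faces incident with $v'$ near $v'$. The only step with genuine content beyond bookkeeping is the matching in the second paragraph: one must check that the bijection $\iota$ of \autoref{loc_inc_AND_loc_surfaces} pairs $v'$ with $f$ \emph{compatibly with incidences and cyclic order}, not merely as a set bijection. This is immediate from how $\iota$ is built --- both the rotation at $v'$ and the boundary walk of $f$ are governed by the same `locally related' relation restricted to orientations of faces incident with $v$ --- but it is where the definitions of local surface, link complex and induced rotation system all have to be lined up at once, so this is the place to be careful.
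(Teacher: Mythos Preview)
Your proof is correct and is exactly the argument the paper has in mind. The paper gives no proof at all (note the \qed\ directly after the statement), treating the corollary as immediate from \autoref{loc_inc_AND_loc_surfaces}; you have simply unpacked that implication carefully, using \autoref{loc_is_orientable} to know the local surface is a cell complex and then matching the rotation at $v'$ with the face boundary of $\iota(v')$ via the lemma.
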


The definitions of link graphs and link complexes can be generalised from 
simplicial complexes to 
complexes as follows. 
The \emph{link graph} of a complex $C$ at a vertex $v$ is the graph whose vertices are 
the edges incident with $v$.
For any traversal of a face of the vertex $v$, we add an edge between the two vertices that when 
considered as edges of $C$ 
are in the face just before and just after that traversal of $v$. We stress that we allow parallel 
edges 
and loops.
Given a complex $C$, any rotation system $\Sigma$ of $C$ defines rotation systems at each link 
graph of $C$.
Hence the definition of link complex extends.  

\section{Constructing piece-wise linear embeddings}\label{sec4}

In this section we prove \autoref{combi} below, which is used in the proof of 
\autoref{combi_intro}. 
   \begin{figure} [htpb]   
\begin{center}
   	  \includegraphics[height=2cm]{./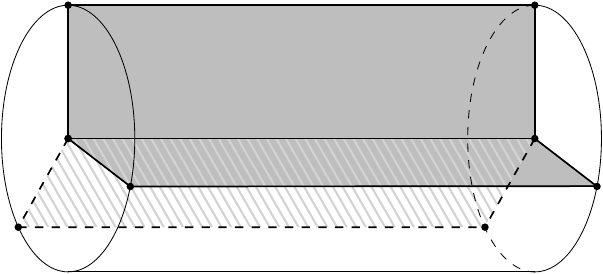}
   	  \caption{A cylinder with an embedded 2-complex.}\label{fig:cylinder}
\end{center}
   \end{figure}
\begin{eg}\label{solid_klein}
Here we give the definition of the Solid Klein Bottle and construct an embedding of a 2-complex $C$ 
in the Solid Klein Bottle that does not induce a planar rotation system. 

Given a solid cylinder (see \autoref{fig:cylinder}), there are two ways to identify the 
bounding discs: one way the boundary becomes a torus and the other way the boundary becomes a 
Klein Bottle. We refer to the topological space obtained from the solid cylinder by identifying as 
in the second case as the \emph{Solid Klein Bottle}. In \autoref{fig:cylinder} we embedded a 
2-complex in 
the solid cylinder. Extending the above identification of the cylinder to the embedded 2-complex, 
induces an embedding of this new 2-complex in the Solid Klein Bottle. This embedding does not 
induce 
a planar rotation system.
\end{eg}

Next we show that the Solid Klein Bottle is the only obstruction that prevents embeddings of 
2-complexes in 3-manifolds to induce planar rotation systems. 

\begin{lem}\label{prs_klein}
 Let $M$ be a 3-manifold that does not include the Solid Klein Bottle as a submanifold.
 Then any embedding of a 2-complex $C$ in $M$ induces a planar rotation system.
\end{lem}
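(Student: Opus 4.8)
The plan is to argue contrapositively and locally: suppose $C$ is embedded in $M$ but the induced rotation system $\Sigma$ is not planar, and produce a Solid Klein Bottle inside $M$. By \autoref{obo}, non-planarity means that for some vertex $v$ the link complex for $(C,\Sigma)$ at $v$ is not a disjoint union of spheres; equivalently, one of its connected components is a non-orientable surface or an orientable surface of positive genus. But link complexes are cell complexes, hence come equipped with an orientation by construction, so every component is in fact orientable — so the only way to fail is that some component has positive genus. The key geometric input I would extract is: a small $2$-sphere $\Sbb^2_v$ around $v$ in $M$ meets $C$ in exactly the link graph $L(v)$, and the way the faces of $C$ pass through the ball bounded by $\Sbb^2_v$ realises the rotators $r(e,v,\Sigma)$. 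So the link complex at $v$ is, up to homeomorphism, a surface \emph{embedded in the $3$-ball} $B_v$ bounded by $\Sbb^2_v$, namely a regular neighbourhood boundary of the part of $C$ near $v$. A closed orientable surface of positive genus embedded in a $3$-ball bounds (on its inner side, the side containing $v$) a handlebody of positive genus, and a handlebody of positive genus contains a solid torus whose core is non-separating; I would instead directly locate a Solid Klein Bottle.

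Here is the mechanism I would use to get the Solid Klein Bottle rather than merely a handlebody. The failure of planarity must come from some edge $e$ of $C$ whose cyclic orientation $\sigma(e)$, when read off in the two link graphs $L(v)$ and $L(w)$ at its two endvertices $v,w$, forces a handle: concretely, trace a closed walk in $C$ through faces and edges along which the local framing (the normal direction coming from $\sigma$) is reversed — this is exactly the combinatorial signature of a Möbius-like twist in $C$'s embedding. Along such a closed walk, take a regular neighbourhood in $M$ of the corresponding annulus/Möbius strip of faces; because the framing reverses, this neighbourhood is a twisted $I$-bundle over the strip, which is a solid cylinder glued to itself by an orientation-reversing homeomorphism of the bounding discs — that is, by \autoref{solid_klein}, a Solid Klein Bottle. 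I would formalise "framing reverses around a closed walk" by looking at the embedding $\iota$ from \autoref{loc_inc_AND_loc_surfaces}: a positive-genus link complex at $v$ gives a face $f$ of that link complex with non-trivial first homology contribution, which I pull back along edges of $C$ to an honest closed trail in the $1$-skeleton with the required twist.

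The order of steps: (1) reduce to the existence of a vertex $v$ with a positive-genus component of the link complex, via \autoref{obo} and orientability of cell complexes; (2) realise that component concretely as an embedded surface bounding a genus-$\ge 1$ handlebody $H \subseteq B_v \subseteq M$ on the side of $v$, using the small-sphere picture of the induced rotation system; (3) inside $H$ find a non-separating simple closed curve $\gamma$, push it slightly so it runs through faces and edges of $C$, and read off from how the faces of $C$ thread around $\gamma$ whether a neighbourhood of the corresponding strip of faces is an untwisted or twisted $I$-bundle; (4) if twisted, its closed regular neighbourhood in $M$ is a Solid Klein Bottle and we are done; (5) if untwisted for this choice of $\gamma$, argue that positive genus forces \emph{some} non-separating curve in $H$ whose associated strip of $C$-faces is twisted — otherwise all the relevant framings are coherent and one can push the whole surface to be orientably embedded with consistent normal, contradicting positive genus together with the fact that $C$ genuinely sits inside the $3$-manifold (an orientable genus-$g$ surface standardly embedded in $S^3$ with $C$ on one side still yields, via the genus, a twisted band — this is where the $3$-dimensionality is essential).

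The main obstacle I anticipate is step (5): cleanly proving that positive genus of a link complex \emph{must} produce a twisted $I$-bundle rather than just an untwisted handle, i.e.\ that a genus-$g$ handlebody sitting in $M$ with $\partial$ decorated by $C$ cannot avoid the Solid Klein Bottle when $g\ge 1$. I expect this to hinge on a careful analysis of the $I$-bundle structure of a regular neighbourhood of a spine of $H$ inside $M$, together with the observation (implicit in \autoref{solid_klein}) that the two ways of closing up a solid cylinder are the \emph{only} two, so a handle is forced to be one or the other; ruling out "all handles untwisted" then reduces to the orientability bookkeeping already available from \autoref{loc_is_orientable} and \autoref{cor7}.
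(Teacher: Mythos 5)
There is a genuine gap, and it sits right at the start of your geometric picture. You treat ``the induced rotation system'' as a fixed object and try to read off its non-planarity from the local geometry at a single vertex $v$: in step (2) you want to realise a positive-genus component of the link complex as an embedded surface bounding a handlebody inside the small ball $B_v$. But this is impossible. If the cyclic orientations $\sigma(e)$ at the edges incident with $v$ are all computed with respect to one consistent orientation of $B_v$ (which always exists, since a ball is orientable), then the link complex at $v$ is literally the small $2$-sphere around $v$ with the link graph drawn on it --- a sphere, never positive genus. A non-spherical link complex can only arise when the orientations used to read off $\sigma(e)$ at different edges are mutually inconsistent, i.e.\ when an edge inherits its orientation from the far endvertex along a path in $M$ that reverses orientation. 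So non-planarity is not a local phenomenon at $v$ that you can see inside $B_v$; it is a global orientation-monodromy phenomenon along a cycle of the $1$-skeleton. Consequently steps (2)--(5) never get off the ground, and the difficulty you flag in step (5) is a symptom of this: there is no handlebody $H\subseteq B_v$ to analyse in the first place. A secondary issue is that the induced rotation system is not canonically defined without these orientation choices, so the contrapositive ``suppose the induced rotation system is not planar'' is not yet meaningful; the lemma asserts that \emph{some} consistent choice yields a planar rotation system.

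The paper's proof works with exactly the monodromy just described, and is much shorter: take a small ball around each vertex and a small cylinder around each edge, orient one ball arbitrarily, and propagate compatible orientations along a spanning tree of the $1$-skeleton. The only possible failure is a non-tree edge whose two ends receive incompatible orientations; gluing the balls and cylinders along its fundamental cycle then produces a Solid Klein Bottle inside $M$ (a solid cylinder closed up by an orientation-reversing identification, as in \autoref{solid_klein}), contradicting the hypothesis. With all local orientations compatible, the resulting $\sigma(e)$ agree at both ends of every edge and every link complex is the small $2$-sphere around its vertex, so the rotation system is planar. Your ``twisted $I$-bundle over a closed walk'' intuition in step (3)--(4) is essentially this fundamental-cycle argument, but you reach for it only after the unworkable handlebody detour; if you discard the link-complex-as-embedded-surface picture and instead track orientations of vertex/edge neighbourhoods directly, your twisted-band mechanism becomes the whole proof.
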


\begin{proof}
 By treating different connected components separately, we may assume that $C$ is connected. 
 Around each vertex of $C$ we pick a small neighbourhood that is an open 3-ball, and around each 
edge of $C$ we pick a small neighbourhood that is an open cylinder. Next we define orientations on 
these neighbourhoods. For that we pick an arbitrary vertex and pick for its neighbourhood one of 
the two orientations arbitrarily. Then we pick compatible orientations at the 
neighbourhoods of the incident edges. As $C$ is connected, we can continue recursively along a 
spanning tree of $C$ until we picked orientations at all neighbourhoods of vertices and edges. If 
for a vertex and an incident edge, their neighbourhoods have incompatible orientations, this edge 
must be outside the spanning tree and we can build a Solid Klein Bottle from its fundamental cycle 
as follows. Indeed, sticking together the balls at the vertices and the cylinders at the edges gives 
a Solid Klein Bottle.
By assumption, this does not occur, so all orientations of neighbourhoods are compatible.

Let $\Sigma$ be the rotation system induced by this embedding of $C$ in $M$ with respect to the 
orientations chosen above. Clearly, this rotations system $\Sigma$ is planar. 
\end{proof}

\begin{cor}\label{prs_orient}
 Any embedding of a 2-complex $C$ in an orientable 3-manifold $M$ induces a planar rotation system.
\end{cor}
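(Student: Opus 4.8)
The plan is to derive this immediately from \autoref{prs_klein}: it suffices to check that an orientable 3-manifold $M$ cannot contain the Solid Klein Bottle as a submanifold. The key point is that the Solid Klein Bottle is \emph{non-orientable}. Recall from \autoref{solid_klein} that it is obtained from the solid cylinder $[0,1]\times D^2$ by identifying the two bounding discs $\{0\}\times D^2$ and $\{1\}\times D^2$ via an orientation-reversing homeomorphism of $D^2$; hence a loop that runs once around the core circle reverses the orientation of the transverse $D^2$-fibre, so no global orientation exists. I would state this as a short observation at the start of the proof.

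Next I would argue that a non-orientable 3-manifold cannot occur as a codimension-zero submanifold (possibly with boundary) of an orientable 3-manifold. The interior of such a submanifold is an open subset of $M$, and an open subset of an orientable manifold inherits an orientation. On the other hand the orientation-reversing loop around the core circle of the Solid Klein Bottle can be taken to lie in its interior, so that interior is still non-orientable — a contradiction. Therefore $M$ includes no Solid Klein Bottle as a submanifold, and \autoref{prs_klein} applies: every embedding of a 2-complex $C$ in $M$ induces a planar rotation system.

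The only (very minor) obstacle is bookkeeping about what "submanifold" means here: one must note that the relevant inclusion of the Solid Klein Bottle is codimension zero (it is a neighbourhood built from balls and cylinders, as in the proof of \autoref{prs_klein}), so that passing to its interior really does produce an open subset of $M$ to which the standard fact "an open subset of an orientable manifold is orientable" can be applied. Once that is set up, no further work is needed.
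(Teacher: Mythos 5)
Your proposal is correct and follows the same route as the paper: the paper's proof simply invokes the well-known fact that a 3-manifold is orientable if and only if it does not contain the Solid Klein Bottle and then applies \autoref{prs_klein}, while you spell out the one direction of that fact actually needed (the Solid Klein Bottle is non-orientable, and a codimension-zero submanifold of an orientable manifold inherits an orientation). Nothing further is required.
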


\begin{proof}
 It is well-known that a 3-manifold is orientable if and only if it does not have the Solid Klein 
Bottle as a submanifold. Hence \autoref{prs_orient} follows from \autoref{prs_klein}. 
\end{proof}

Throughout this section we fix a connected and locally connected simplicial complex $C$ with a 
rotation system $\Sigma$. 
An associated topological space $T(C,\Sigma)$ is defined as follows.
For each local surface $S$ of $(C,\Sigma)$ we take an embedding into \Sthree\ as follows. 
Let $g$ be the genus of $S$. We start with the unit ball in \Sthree\ and then identify $g$ disjoint 
pairs of discs through the outside.\footnote{We have some flexibility along which paths on the 
outside we do these identifications but we do not need to make particular choices for our 
construction to work.} The constructed surface is isomorphic to $S$, so this defines an embedding of 
$S$. 
Each local surface 
is oriented and we denote by $\hat S$ the topological space obtained from \Sthree\ by deleting all 
points on the outside of $S$. We obtain  $T(C,\Sigma)$ from the simplicial complex $C$ by gluing 
onto each 
local surface $S$ the topological space $\hat S$ along $S$. 

We remark that associated topological spaces may depend on the chosen embeddings of the local 
surfaces $S$ into \Sthree. However, if all local surfaces are spheres, then any two associated 
topological spaces are isomorphic and in this case we shall talk about `the' associated topological 
space. 

Clearly, associated topological spaces $T(C,\Sigma)$ are compact and connected as $C$ is connected.

\begin{lem}\label{is_manifold}
The rotation system $\Sigma$ is planar if and only if the associated topological 
space $T(C,\Sigma)$ is an oriented 3-dimensional manifold. 
\end{lem}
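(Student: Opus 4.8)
The plan is to verify that $T(C,\Sigma)$ is a manifold locally, stratifying by the type of point, and to show that the only place where manifold-ness can fail is at a cloned vertex of $C$, where it fails exactly when the corresponding link complex is not a sphere.

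\emph{Points not on a vertex of $C$.} A point in the interior of some $\hat S$ obviously has a Euclidean neighbourhood. A point in the interior of a face $f$ of $C$ lies on the common boundary of exactly two of the pieces $\hat S$, namely those whose local-surface-equivalence classes contain the two orientations $\vec f$ and $\vec f^{-1}$ of $f$ (possibly a single piece attached to both sides of $f$); a small neighbourhood of the point is thus the union of two half-balls glued along a disc, hence a ball. For a point in the interior of an edge $e$ of $C$ with $\sigma(e)=(f_1,\dots,f_k)$, the faces $f_1,\dots,f_k$ divide a small transverse disc around $e$ into $k$ wedges, and by the definition of `locally related' the wedge between consecutive faces $f_i,f_{i+1}$ is filled by exactly one piece $\hat S$, namely the one whose class contains the orientation of $f_i$ traversing $e$ positively (the single-face case is handled by the stated convention). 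Hence the transverse section is $k$ wedges glued cyclically, i.e.\ a disc, and the point again has a Euclidean neighbourhood. This part of the argument does not use planarity.

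\emph{Points on a vertex $v$ of $C$.} Near $v$, the space $T(C,\Sigma)$ is the cone $v\ast L(v)$ (the part coming from $C$) together with, for each clone $v'$ of $v$ lying in a local surface $S$, the part of $\hat S$ near $v'$. By \autoref{loc_inc_AND_loc_surfaces} the clones of $v$ are in bijection, via $\iota$, with the faces $\phi$ of the link complex at $v$, and by \autoref{cor7} a neighbourhood of $v'$ in $S$ is a cone over the boundary walk of $\phi=\iota(v')$; since $\hat S$ is the solid side of $S$, a neighbourhood of $v'$ in $\hat S$ is a cone over the closed disc bounded by that walk. Re-assembling, gluing these solid cones onto $v\ast L(v)$ along the cones over the walks $\partial\phi\subseteq L(v)$ produces precisely the cone over the link complex at $v$ — recall that the link complex is $L(v)$ with a disc attached along each of its faces. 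Since $C$ is locally connected, the link complex at $v$ is a connected closed surface $\mathcal F_v$. The cone over $\mathcal F_v$ is a manifold away from the cone point, and it is a manifold (without boundary) at the cone point if and only if $\mathcal F_v\cong\Sbb^2$ (a closed surface homotopy equivalent to $\Sbb^2$ is $\Sbb^2$). Hence $T(C,\Sigma)$ is a $3$-manifold near $v$ iff the link complex at $v$ is a sphere.

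\emph{Conclusion.} If $\Sigma$ is planar then all link complexes are spheres, so by the two parts above $T(C,\Sigma)$ is a $3$-manifold everywhere; moreover, orienting each piece $\hat S$ by restricting a fixed orientation of $\Sbb^3$ — having chosen the embedding $S\hookrightarrow\Sbb^3$ so that $S$ is the oriented boundary of $\hat S$ with respect to the orientation of the local surface from \autoref{loc_is_orientable} — these orientations agree across every face $f$ of $C$, since the two pieces meeting along $f$ induce on it the opposite orientations $\vec f$ and $\vec f^{-1}$, so $T(C,\Sigma)$ is oriented. Conversely, if $T(C,\Sigma)$ is a (boundaryless) $3$-manifold, then in particular it is a manifold near each vertex $v$, forcing $\mathcal F_v\cong\Sbb^2$ for every $v$, i.e.\ $\Sigma$ is planar. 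The main obstacle is the vertex case, specifically the identification of a neighbourhood of $v$ in $T(C,\Sigma)$ with the cone over the link complex; this is where \autoref{cor7} and \autoref{loc_inc_AND_loc_surfaces} are used and where one must carefully match the cone structures on the various pieces $\hat S$ with the cone structure on $v\ast L(v)$.
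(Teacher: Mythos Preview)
Your proof is correct and, for the direction ``planar $\Rightarrow$ oriented manifold'', follows the same stratified local analysis as the paper: interior points of the solids $\hat S$, face points, edge points (the paper's ``piece of cake''), and finally vertices via \autoref{loc_inc_AND_loc_surfaces} and \autoref{cor7} to identify a neighbourhood of $v$ with the cone over the link complex.

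Where you diverge is in the converse. The paper does \emph{not} reuse the cone-over-link-complex description to go back; instead it invokes \autoref{prs_orient} (any embedding of a 2-complex in an orientable 3-manifold induces a planar rotation system, proved via the Solid Klein Bottle obstruction of \autoref{prs_klein}) applied to the tautological embedding $C\hookrightarrow T(C,\Sigma)$. Your route is more self-contained: you observe that the identification of a neighbourhood of $v$ with the cone over the link complex $\mathcal F_v$ holds regardless of planarity, and then use that the cone over a closed connected surface is a 3-manifold at the cone point iff that surface is $\Sbb^2$. This actually yields a slightly sharper statement --- you show that $T(C,\Sigma)$ being a manifold (without any orientability hypothesis) already forces $\Sigma$ to be planar --- whereas the paper's argument genuinely uses orientability of $T(C,\Sigma)$. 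Both approaches are short; the paper's has the advantage of isolating a reusable general lemma (\autoref{prs_orient}), yours the advantage of staying entirely within the local analysis already set up.
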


 \begin{proof}
 \autoref{prs_orient} implies that if the topological space $T(C,\Sigma)$ is a 3-dimensional 
orientable 3-manifold, then the rotation system $\Sigma$ is planar. 
 Conversely, now assume that $\Sigma$ is a planar rotation system. 
  We have to show that the topological space $T(C,\Sigma)$ is an orientable 3-manifold. 
  It suffices to show that $T(C,\Sigma)$ is a 3-manifold since then orientability follows 
immediately from the construction of $T(C,\Sigma)$. 
So we are to show that there is a neighbourhood around any point $x$ of  $T(C,\Sigma)$ that is 
isomorphic 
to the closed 3-dimensional ball $B_3$.

 If $x$ is a point not in $C$, this is clear. If $x$ is an interior point of a face $f$, we obtain 
a neighbourhood of $x$ by gluing together neighbourhoods of copies of $x$ in the local surfaces 
that 
contain an orientation of $f$.
Each orientation of $f$ is contained in local surfaces exactly once. Hence we glue together the two 
orientations of $f$ and
clearly $x$ has a neighbourhood isomorphic to $B_3$.

Next we assume that $x$ is an interior point of an edge $e$.  Some open neighbourhood of 
$x$ is isomorphic to the 
topological space obtained from gluing 
together for each copy of $e$ in a local surface, a neighbourhood around a copy $x'$ of $x$ on 
those edges.
A neighbourhood around $x'$ has the shape of a piece of a cake, see \autoref{fig:piece_of_cake}

   \begin{figure} [htpb]   
\begin{center}
   	  \includegraphics[height=2cm]{./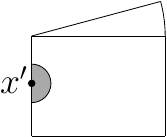}
   	  \caption{A piece of a cake. This space is obtained by taking the product of a triangle 
with the unit interval. The edge $e$ is mapped to the set of points 
corresponding to some vertex of the triangle. }\label{fig:piece_of_cake}
\end{center}
   \end{figure}

First we consider the case that $x$ has several copies.
As $\sigma(e)$ is a cyclic orientation, these pieces of a cake are glued together in a cyclic way 
along faces. Since each cyclic 
orientation of a face appears exactly once in local surfaces, we identify in each gluing step the 
two cyclic orientations of a face. 
Informally, the overall gluing will be a `cake' with $x$ as an interior point. 
Hence a neighbourhood of $x$ is isomorphic to $B_3$. 
If there is only one copy of $x'$, then the copy of $e$ containing $x'$ is incident with the two 
orientations of a single face.
Then we obtain a neighbourhood of $x$ by identifying these two orientations. Hence there is a 
neighbourhood of $x$ isomorphic to $B_3$.

It remains to consider the case where $x$ is a vertex of $C$. We obtain a neighbourhood of $x$ by 
gluing together neighbourhoods of copies 
of $x$ in local surfaces. We shall show that we have one such copy for every face of the link 
complex for $(C,\Sigma)$ and a neighbourhood of $x$ in such a copy is given by the cone over that 
face with $x$ being the top of the cone, see \autoref{fig:paste}. 
   \begin{figure} [htpb]   
\begin{center}
   	  \includegraphics[height=4cm]{./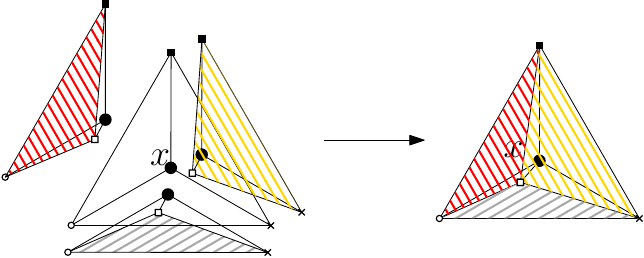}
   	  \caption{In this example the link complex of $x$ is a 
tetrahedron. The three faces visible in our drawing are highlighted in red, gold and grey.     
   	  On the left we see how the four cones over the faces of the link complex  
are pasted together to form the 
cone over the link complex depicted on the right. }\label{fig:paste}
\end{center}
   \end{figure}
We shall show that the glued together neighbourhood 
is the cone over the link complex with $x$ at the top. Since $\Sigma$ is planar and $C$ is locally 
connected, the link complex is isomorphic to the 2-sphere. Since the cone over the 2-sphere is a 
3-ball, the neighbourhood of $x$ has the desired type.

Now we examine this plan in detail. 
By \autoref{loc_inc_AND_loc_surfaces} and \autoref{cor7}, the copies are 
mapped by the 
bijection $\iota$ to the faces of the link complex at $x$ and a neighbourhood around such a copy 
$x'$ is isomorphic to the cone  
with top $x'$ over the face $\iota(x')$.
We glue these cones over the faces $\iota(x')$ on their faces that are obtained from edges of 
$\iota(x')$ by adding the top $x'$. 

The glued together complex is isomorphic to the cone over the complex $S$ obtained by gluing 
together the faces $\iota(x')$ along edges, where we always glue the edge the way round so that 
copies of the same vertex of the local 
incidence graph are identified. Hence the vertex-edge-incidence relation and the 
edge-face-incidence relation of $S$ are the same as for 
the link complex at $x$. The same is true for the cyclic orderings of edges on faces.
So $S$ is equal to the link complex at $x$.

Hence a neighbourhood of $x$ is isomorphic to a cone with top $x$ over the link complex 
at $x$. Since $\Sigma$ is a 
planar rotation system, the link complex is a disjoint union of spheres. As $C$ is 
locally connected, it is a sphere. Thus its 
cone is isomorphic to $B_3$. 
 \end{proof}

\begin{lem}\label{is simply connected}
 If $C$ is simply connected, then any associated topological space $T(C,\Sigma)$ is simply 
connected.
\end{lem}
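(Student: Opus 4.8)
The plan is to exploit the fact that $T(C,\Sigma)$ is built from the simplicial complex $C$ by gluing onto each local surface $S$ the compact piece $\hat S$, and that each such $\hat S$ is itself simply connected: indeed $\hat S$ is $\Sbb^3$ with the outside of an embedded surface removed, i.e. a handlebody-like region bounded by $S$, but crucially in our construction it is obtained from a closed $3$-ball by identifying $g$ pairs of discs through the \emph{outside}, so from the inside $\hat S$ deformation retracts onto the ball and hence is simply connected (in fact contractible). So $T(C,\Sigma)$ is obtained from $C$ by attaching simply connected spaces along the subspaces $S\subseteq C$. The idea is then to compute $\pi_1$ via van Kampen.

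Concretely, first I would set up a good open cover. Let $U$ be a small open neighbourhood of $C$ inside $T(C,\Sigma)$ that deformation retracts onto $C$ — this exists because each $\hat S$ collar-neighbourhood of its boundary surface $S$ retracts onto $S$. For each local surface $S$ let $V_S$ be the interior of $\hat S$ together with a collar, an open set that deformation retracts onto $\hat S$ (hence simply connected) and whose intersection with $U$ deformation retracts onto $S$. Since the local surfaces are the boundary components and the $\hat S$ are glued along disjoint subcomplexes, the $V_S$ can be taken pairwise disjoint. Now apply the van Kampen theorem (for an arbitrary, possibly infinite, open cover, or iterate over the local surfaces one at a time): $\pi_1(T(C,\Sigma))$ is the quotient of the free product $\pi_1(C)*\bigl(\ast_S \pi_1(\hat S)\bigr)$ by the relations identifying, for each $S$, the images of $\pi_1(S)$ in the two factors $\pi_1(C)$ and $\pi_1(\hat S)$. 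Since $C$ is simply connected by hypothesis and each $\pi_1(\hat S)$ is trivial, the whole free product is trivial, and therefore so is the quotient; hence $T(C,\Sigma)$ is simply connected.

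The main point requiring care — and what I expect to be the chief obstacle — is the verification that $\hat S$ is simply connected, or equivalently that the attaching is along a subspace with the right connectivity behaviour so that van Kampen applies cleanly. One has to be careful that $\hat S$ really is the "inside" region: by construction we take the unit ball, identify $g$ pairs of discs on its boundary \emph{through the outside}, and $\hat S$ is what remains of $\Sbb^3$ after deleting the outside of the resulting surface $S$. The identifications happen outside $\hat S$, so $\hat S$ is just the closed unit $3$-ball with $2g$ boundary discs marked (the identifications only affect how $S=\partial\hat S$ sits as a quotient, not the topology of $\hat S$ itself as the ball); thus $\hat S\cong B_3$ is contractible. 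A second, more bookkeeping-style, point is that the union is over possibly infinitely many local surfaces and infinitely many vertices, so one should phrase van Kampen in the colimit form (or retract $U$ onto $C$ and add the $\hat S$ one at a time, using at each stage that $S$ is connected by \autoref{loc_is_con}, so that the Seifert–van Kampen hypotheses on path-connected intersections hold). Once these two points are in place, the conclusion is immediate from triviality of $\pi_1(C)$ and of each $\pi_1(\hat S)$.

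It is worth noting that \autoref{loc_is_con} (local surfaces of planar rotation systems are connected) is what guarantees each $S$, hence each intersection $U\cap V_S$ up to homotopy, is path-connected; without it the van Kampen relations would be more delicate. If one prefers not to invoke planarity here, one can instead observe that even a disconnected local surface contributes only relations and trivial groups, so the argument still goes through, but using connectedness streamlines the presentation.
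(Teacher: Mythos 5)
There is a genuine error at exactly the point you flag as the chief obstacle: the claim that $\hat S$ is contractible is false in general. Identifying $g$ disjoint pairs of discs on the boundary of the unit ball (through the outside, i.e.\ by attaching tubes that run through the exterior region) genuinely changes the topology of the enclosed region: for $g=1$ the result is a solid torus, and in general $\hat S$ is a genus-$g$ handlebody, so $\pi_1(\hat S)$ is free of rank $g$. The phrase ``through the outside'' only records where the identifying tubes live (so that the resulting surface $S$ is embedded and oriented in $\Sbb^3$); it does not mean the identifications leave $\hat S$ homeomorphic to $B_3$. Since $\hat S$ is by definition everything on the inside of $S$, it contains those tubes. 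With $\pi_1(\hat S)$ nontrivial, your final step ``the whole free product is trivial, hence so is the quotient'' collapses as written.

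The argument is salvageable, and the repair is what the paper actually does: instead of triviality of $\pi_1(\hat S)$, one uses that the inclusion $S\hookrightarrow\hat S$ induces a \emph{surjection} $\pi_1(S)\onto\pi_1(\hat S)$ (true for a handlebody and its boundary, and this is the property the paper extracts from the construction of the embedding of $S$ into $\hat S$). Then in your van Kampen presentation the relations identify the image of $\pi_1(S)$ in $\pi_1(\hat S)$ with its image in $\pi_1(C)$, which is trivial since $C$ is simply connected; surjectivity then kills the entire factor $\pi_1(\hat S)$, and the quotient is trivial. Your remaining scaffolding --- the neighbourhood $U$ retracting onto $C$, the pairwise disjoint collared pieces $V_S$, gluing the $\hat S$ one at a time, and using \autoref{loc_is_con} for path-connectedness of the intersections --- matches the paper's proof and is fine; only the justification of why each $\pi_1(\hat S)$ dies needs to be replaced as above.
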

\begin{proof}
This is a consequence of Van Kampen's Theorem {\cite[Theorem 1.20]{{Hatcher}}}. 
Indeed, we obtain $T$ 
from $T(C,\Sigma)$ by deleting all interior points of the sets $\hat S$ for local surfaces $S$ that 
are not in a small open neighbourhood of $C$. This can be done in such a 
way that $T$ has a deformation retract to $C$, and thus is 
simply connected. 
Now we recursively glue the spaces $\hat S$ back onto $T$. 
In each step we glue a single space $\hat S$. Call the space obtained after $n$ gluings $T_n$.

The fundamental group of $\hat S$ is a quotient of the fundamental group of the 
intersection of $T_n$ and $\hat S$ (this follows from the construction of the embedding of the 
surface $S$ into the space $\hat S$). And the fundamental group of $T_n$ is trivial by induction. 
So we can apply  
Van Kampen's Theorem to deduce that the gluing space $T_{n+1}$ has trivial fundamental group. Hence 
the final gluing space $T(C,\Sigma)$ has trivial fundamental group. So it is simply connected. 
\end{proof}

The converse of \autoref{is simply connected} is true if all local surfaces for $(C,\Sigma)$ are 
spheres as follows. 

\begin{lem}\label{is_simply_connected2}
 If all local surfaces for $(C,\Sigma)$ are spheres and the associated topological space 
$T(C,\Sigma)$ is simply connected, then $C$ is simply connected.
\end{lem}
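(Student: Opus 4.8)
The plan is to reverse the gluing argument from the proof of \autoref{is simply connected}, using the extra hypothesis that every local surface is a sphere. When $S$ is a sphere, the space $\hat S$ is obtained from the closed $3$-ball $B_3$ by identifying $g=0$ pairs of discs, so $\hat S$ is just $B_3$ itself; in particular $\hat S$ is simply connected and, crucially, contractible. So $T(C,\Sigma)$ is built from $C$ by gluing copies of $B_3$ along the subcomplexes $S\subseteq C$.

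First I would set up the deformation retract as in \autoref{is simply connected}: let $T$ be the subspace of $T(C,\Sigma)$ obtained by deleting the interior points of each $\hat S$ that lie outside a fixed small neighbourhood of $C$; then $T$ deformation retracts onto $C$, so $\pi_1(T)\cong\pi_1(C)$. The idea is then to build $T(C,\Sigma)$ back up by gluing the sets $\hat S$ one at a time, obtaining intermediate spaces $T_0=T, T_1,\dots,T_N=T(C,\Sigma)$, and to run Van Kampen's theorem \emph{in the other direction}: since each $\hat S\simeq B_3$ is simply connected, and the intersection $T_n\cap\hat S$ (a neighbourhood of $S$ inside $\hat S$, which deformation retracts onto $S$) is path-connected, Van Kampen gives that $\pi_1(T_{n+1})$ is a quotient of $\pi_1(T_n)$ — the normal subgroup generated by the image of $\pi_1(S)$. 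Hence $\pi_1(T(C,\Sigma))$ is a quotient of $\pi_1(T)\cong\pi_1(C)$. Since $T(C,\Sigma)$ is assumed simply connected, this only tells us $\pi_1(C)$ surjects onto the trivial group, which is automatic — so this direction alone is not enough, and I need to argue that none of the relators coming from the spheres $S$ actually kill anything essential.

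The key point is therefore to show that each $\pi_1(S)$ maps to the \emph{trivial} subgroup of $\pi_1(T_n)$, i.e.\ that gluing in $\hat S$ imposes no new relation, so that in fact $\pi_1(C)\cong\pi_1(T(C,\Sigma))=1$. Since $S$ is a $2$-sphere and hence simply connected, $\pi_1(S)$ is already trivial, so the image of $\pi_1(S)$ in $\pi_1(T_n)$ is trivially trivial — the relators are vacuous and each gluing step is an isomorphism on $\pi_1$. Thus $\pi_1(C)\cong\pi_1(T)\cong\pi_1(T_1)\cong\dots\cong\pi_1(T(C,\Sigma))=1$, so $C$ is simply connected. (One should note that $C$ is already assumed connected and locally connected in this section, so there is no basepoint issue.)

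The main obstacle I anticipate is purely technical: verifying that the intersection $T_n\cap\hat S$ is path-connected and has the homotopy type of $S$, so that Van Kampen applies cleanly, and that the intermediate spaces $T_n$ are open-coverable in the required way — this is exactly the kind of bookkeeping about neighbourhoods of the local surfaces inside the $\hat S$ that was invoked (and left to the reader) in \autoref{is simply connected}, and the same remarks carry over here. The honest subtlety is that this lemma secretly only needs $\pi_1(S)=1$ for each local surface $S$, which is what "sphere" buys us; the hypothesis is used precisely to guarantee that re-gluing introduces no relators, and I would state that explicitly.
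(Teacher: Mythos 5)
Your proof is correct, but it takes a genuinely different route from the paper's. The paper argues directly with a null-homotopy: given a loop $\varphi$ in $C$, it contracts $\varphi$ in $T(C,\Sigma)$, perturbs the contracting disc so that it misses one interior point of each $\hat S$, and then composes with the radial projection of each punctured ball onto its boundary sphere $S\subseteq C$; this pushes the entire homotopy into $C$. You instead rerun the Van Kampen induction of \autoref{is simply connected} backwards: because each $\hat S$ is a $3$-ball and each intersection $T_n\cap\hat S$ deformation retracts onto the $2$-sphere $S$, both $\pi_1(\hat S)$ and $\pi_1(T_n\cap\hat S)$ are trivial, so each gluing step induces an isomorphism on fundamental groups and $\pi_1(C)\cong\pi_1(T)\cong\pi_1(T(C,\Sigma))=1$. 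Both arguments use the sphere hypothesis at the analogous point: the paper needs $\hat S$ minus a point to retract onto $S$ (which fails for a handlebody of positive genus, since $\pi_1$ of the boundary does not inject into $\pi_1$ of the handlebody), while you need $\pi_1(S)=\pi_1(\hat S)=1$ so that no relators are introduced. Your route proves the slightly stronger statement that the inclusion $C\hookrightarrow T(C,\Sigma)$ is a $\pi_1$-isomorphism, and it avoids the general-position perturbation that the paper leaves informal, at the price of the open-cover bookkeeping for Van Kampen -- which the paper already incurs in \autoref{is simply connected}, so nothing new is needed. The only blemish is the detour in your third paragraph, where you first derive a quotient statement and declare it ``not enough'': you could go straight to the observation that the amalgamating subgroup is trivial.
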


\begin{proof}
Let $\varphi$ an image of $\Sbb^1$ in $C$. Since  $T(C,\Sigma)$ is simply connected, there is 
a homotopy from $\varphi$ to a point of $C$ in $T(C,\Sigma)$.
We can change the homotopy so that it avoids an interior point of each local 
surface of the embedding. 
Since each local surface is a sphere, for each local surface without the chosen point there is a 
continuous projection to its 
boundary. Since these projections are continuous, the concatenation of them with 
the homotopy is continuous. Since this concatenation is constant on $C$ this 
defines a homotopy of $\varphi$ inside $C$. Hence $C$ is simply connected.   
\end{proof}

We conclude this section with the following special case of \autoref{combi_intro}. 

\begin{thm}\label{combi}
Let $\Sigma$ be a rotation system of a locally connected simplicial complex $C$.
Then $\Sigma$ is planar if and 
only if $T(C,\Sigma)$ is an oriented 3-manifold. 
And if $\Sigma$ is planar and $C$ is simply connected, then $T(C,\Sigma)$ must be the 3-sphere. 
\end{thm}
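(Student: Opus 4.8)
The plan is to combine the lemmas already established in this section with Perelman's theorem. First, the equivalence ``$\Sigma$ is planar iff $T(C,\Sigma)$ is an oriented $3$-manifold'' is literally \autoref{is_manifold}, so there is nothing new to do there; one only has to note that \autoref{is_manifold} was stated for the ``the'' associated topological space in the sphere case but the construction and its proof go through verbatim for any choice of embeddings of the local surfaces, so the two directions of the first claim follow.

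For the second statement, assume $\Sigma$ is planar and $C$ is simply connected. By \autoref{is_manifold}, $T(C,\Sigma)$ is an oriented $3$-manifold, and it is compact and connected as remarked just before \autoref{is_manifold}. By \autoref{is simply connected}, $T(C,\Sigma)$ is simply connected. Hence $T(C,\Sigma)$ is a compact, simply connected $3$-manifold, so by Perelman's theorem (quoted in the introduction) it is homeomorphic to $\Sbb^3$. The only subtlety is that Perelman's theorem as usually stated applies to manifolds without boundary; here one should first observe that $T(C,\Sigma)$ has empty boundary, which is immediate from the manifold conclusion of \autoref{is_manifold} (every point, including points of $C$, has a neighbourhood isomorphic to the \emph{closed} ball $B_3$ with $x$ in its interior, as the proof of \autoref{is_manifold} shows). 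So $T(C,\Sigma)$ is a closed simply connected $3$-manifold and Perelman applies directly.

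Finally, to conclude that $T(C,\Sigma)$ ``must be the $3$-sphere'' in the sense intended — i.e.\ that this is the unique associated topological space and that $C$ itself embeds in $\Sbb^3$ — one uses that all local surfaces are spheres. Here I would invoke \autoref{loc_is_con} (local surfaces of planar rotation systems are connected) together with the fact that $T(C,\Sigma)$ being a simply connected $3$-manifold forces each local surface, being a two-sided separating surface in $\Sbb^3$ bounding a handlebody-like region $\hat S$ on one side, to be a sphere: if some local surface $S$ had positive genus, then $\hat S$ would contribute a nontrivial loop that cannot be killed, contradicting simple connectivity of $T(C,\Sigma)$. Once every local surface is a sphere, the remark preceding \autoref{is_manifold} tells us the associated topological space is well-defined up to isomorphism, so ``the'' associated topological space $T(C,\Sigma)$ is $\Sbb^3$, and since $C$ sits inside $T(C,\Sigma)$ by construction this also yields the desired embedding of $C$ into $\Sbb^3$.

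The main obstacle I expect is the last paragraph: showing that planarity of $\Sigma$ (equivalently, simple connectivity of $T(C,\Sigma)$) forces all local surfaces to be spheres, rather than merely orientable surfaces. One has to argue carefully that a positive-genus local surface produces an essential loop in $T(C,\Sigma)$ surviving all the gluings — this is essentially the reverse direction of the Van Kampen computation in the proof of \autoref{is simply connected}, and handling the bookkeeping of which handles can be capped off by the other $\hat S$'s is where the real work lies. Everything else is an assembly of \autoref{is_manifold}, \autoref{is simply connected}, and Perelman.
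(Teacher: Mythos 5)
Your first two paragraphs are exactly the paper's proof: reduce to connected $C$ (the section's standing assumption, which you should state explicitly since \autoref{is_manifold} is proved under it), quote \autoref{is_manifold} for the first equivalence, and combine \autoref{is_manifold}, \autoref{is simply connected} and Perelman's theorem for the second claim. That part is complete and correct.

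Your third paragraph, however, is both unnecessary and unsound. It is unnecessary because \autoref{is simply connected} and \autoref{is_manifold} apply to \emph{every} associated topological space, whatever embeddings of the local surfaces were chosen; Perelman then shows each one is $\Sbb^3$, so the conclusion ``$T(C,\Sigma)$ must be the 3-sphere'' holds for all choices and no separate well-definedness or ``all local surfaces are spheres'' argument is needed (and the embedding of $C$ into $\Sbb^3$ comes for free from $C\subseteq T(C,\Sigma)$). It is unsound because the sketched argument --- that a positive-genus local surface $S$ would make $\hat S$ ``contribute a nontrivial loop that cannot be killed'' --- is false as stated: an unknotted torus embeds in $\Sbb^3$, and the core loop of the solid side bounds a disc on the other side, so nothing prevents a simply connected ambient space a priori. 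The fact that local surfaces of a planar rotation system are spheres is genuinely nontrivial; it is \autoref{loc_are_spheres}, proved in \autoref{sec6} by an Euler-formula counting argument with the dual complex, and it needs the $p$-nullhomologous hypothesis. So you have correctly assembled the proof and then appended, as ``the main obstacle'', a claim that is neither needed for \autoref{combi} nor provable by the loop-killing heuristic you propose.
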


\begin{proof}
By treating different connected components separately, we may assume that $C$ is connected.
The first part follows from \autoref{is_manifold}. The second part follows from \autoref{is simply 
connected} 
and Perelman's theorem \cite{Perelman1, Perelman2, Perelman3} that any compact simply connected 
3-manifold is isomorphic to the 3-sphere.
\end{proof}

\begin{rem}\label{P_rem}
We used Perelman's theorem in the proof of \autoref{combi}. On the other hand it 
together with Moise's theorem \cite{moise}, which says 
that every compact 3-dimensional manifold has a triangulation, implies Perelman's theorem as 
follows. Let $M$ be a simply connected 3-dimensional compact manifold. 
 Let $T$ be a triangulation of $M$. And let $C$ be the simplicial complex obtained from $T$ by 
deleting the 3-dimensional cells. 
Let $\Sigma$ be the rotation system given by the embedding of $C$ into $T$. 
It is clear from that construction that $T$ is equal to the triangulation given by 
the embedding of $C$ into $T(C,\Sigma)$. Hence we can apply \autoref{is_simply_connected2} to 
deduce that $C$ is simply connected. 
Hence by \autoref{combi} the topological space $T(C,\Sigma)$, into which $C$ embeds, is isomorphic 
to the 3-sphere. Since $T(C,\Sigma)$ is isomorphic to $M$, we deduce that $M$ is isomorphic to  the 
3-sphere. 
\end{rem}

\section{Cut vertices}\label{sec5}

In this section we deduce \autoref{combi_intro} from \autoref{combi} proved in the last section.
Given a prime $p$, a \scom\ $C$ is \emph{$p$-nullhomologous} if every directed cycle of $C$ is 
generated over $\Fbb_p$ by the boundaries of faces of $C$. Note that a simplicial complex $C$ 
is  $p$-nullhomologous if and only if the first homology group $H_1(C,\Fbb_p)$ is trivial.
Clearly, every simply connected \scom\ is $p$-nullhomologous.

A vertex $v$ in a connected complex $C$ is a \emph{cut vertex} if the 1-skeleton of $C$ without $v$ 
is a disconnected graph\footnote{We 
define this in terms of the 1-skeleton instead of directly in terms of $C$ for a technical reason: 
The object obtained from a simplicial 
complex by deleting a vertex may have edges not incident with faces. So it would not be a 
2-dimensional simplicial complex in the 
terminology of this paper.}.
A vertex $v$ in an arbitrary, not necessarily connected, complex $C$ is \emph{a cut vertex} if it 
is a 
cut vertex in a connected 
component of $C$.

\begin{lem}\label{is_loc_con}
 Every $p$-nullhomologous simplicial complex without a cut vertex is locally connected. 
\end{lem}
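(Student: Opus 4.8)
The plan is to argue contrapositively: assuming some link graph $L(v)$ is disconnected while $v$ is not a cut vertex, I will exhibit a directed cycle of $C$ that is not a combination of face boundaries over $\Fbb_p$, contradicting $p$-nullhomology. Since link graphs and the cut-vertex condition depend only on the connected component of $v$, I may assume $C$ is connected. Suppose $L(v)$ is disconnected, let $\Lcal_1$ be one of its components, let $A$ be the set of edges of $C$ incident with $v$ that lie in $\Lcal_1$, and let $B$ be the set of the remaining edges incident with $v$; both $A$ and $B$ are nonempty. Because $v$ is not a cut vertex, the $1$-skeleton of $C$ minus $v$ is connected, so I can choose $e_1=vu_1\in A$ and $e_2=vu_2\in B$ (distinct edges, hence $u_1\neq u_2$ since $C$ is simplicial) together with a simple path $P$ from $u_1$ to $u_2$ avoiding $v$. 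Let $D$ be the directed cycle running $v\to u_1$ along $e_1$, then along $P$ to $u_2$, then $u_2\to v$ along $e_2$; among the edges incident with $v$, only $e_1\in A$ and $e_2\in B$ occur in $D$, and no edge of $P$ is incident with $v$.

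The key local fact is that every face $f$ of $C$ incident with $v$ is a triangle $vab$ whose two edges $va,vb$ at $v$ are joined by the edge $f$ of $L(v)$, hence lie in the \emph{same} component of $L(v)$; consequently, for every face of $C$ the set of its boundary edges incident with $v$ is entirely inside $A$ or entirely inside $B$ (and empty if the face misses $v$). I exploit this through the linear functional $\psi$ on $C_1(C;\Fbb_p)$ that records the $[v]$-coefficient of the boundary operator, but counting only edges in $A$: on a directed edge $\vec g$ of $C$ put $\psi(\vec g)=+1$ if $g\in A$ and $\vec g$ points towards $v$, $\psi(\vec g)=-1$ if $g\in A$ and $\vec g$ points away from $v$, and $\psi(\vec g)=0$ otherwise, extended linearly.

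Now evaluate $\psi$ on both sides of an identity $D=\sum_f\lambda_f\,\partial f$ over $\Fbb_p$, which exists since $D$ is a directed cycle and $C$ is $p$-nullhomologous. For the right-hand side: if a face misses $v$ or has its $v$-edges in $B$, its boundary contains no edge of $A$ and contributes $0$; if $f=vab$ with $va,vb\in A$, then one trip around $\partial f$ passes through $v$ exactly once, entering along one of $va,vb$ and leaving along the other, so the $A$-restricted $[v]$-count is $(+1)+(-1)=0$. Hence $\psi(D)=\sum_f\lambda_f\psi(\partial f)=0$. For the left-hand side: $D$ is a cycle, so the full $[v]$-coefficient of $\partial D$ vanishes, and it splits as the contribution of $e_1$ plus that of $e_2$, each $\pm1$; the $A$-part is exactly the contribution of $e_1$, so $\psi(D)=\pm1\neq0$ in $\Fbb_p$. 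This contradiction forces every $L(v)$ to be connected, i.e. $C$ is locally connected. The one delicate point is the sign bookkeeping showing $\psi(\partial f)=0$: it rests precisely on the two facts that the boundary of a triangle meets $v$ at most once (giving cancelling $\pm1$'s) and that both of its $v$-edges lie in a single component of $L(v)$; checking that $D$ is a genuine directed cycle to which $p$-nullhomology applies, and that $D$ uses no $v$-edge besides $e_1,e_2$, is routine.
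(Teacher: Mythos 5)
Your proof is correct, and its skeleton coincides with the paper's: both argue contrapositively, pick two edges of $C$ at $v$ lying in different components of the link graph $L(v)$, close them into a cycle through $v$ using a path that avoids $v$ (possible since $v$ is not a cut vertex), and then derive a contradiction with $p$-nullhomology from the key observation that every face incident with $v$ joins two edges in the \emph{same} component of $L(v)$. Where you differ is the final accounting. The paper restricts a putative generating family of faces to those incident with $v$, views them as a multigraph on $V(L(v))$, and runs a handshake/parity argument: all vertices except the two chosen ones have even degree, so the two odd-degree vertices must share a component. Your version replaces this with the signed linear functional $\psi$ (the $[v]$-coefficient of the boundary, restricted to the edges in one component $A$), which vanishes on every face boundary but equals $\pm1$ on the cycle. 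This buys two things: the cancellation $(+1)+(-1)=0$ on a triangle's boundary is completely transparent, and the argument works verbatim over $\Fbb_p$ for every prime $p$, whereas the paper's even/odd degree count is cleanest when coefficients are taken mod $2$ and requires a little more care to justify for odd $p$ (where the signs in $\sum_f \lambda_f\,\partial f$ do not disappear). So your route is a slight refinement of the same idea rather than a different proof.
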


\begin{proof}
 We construct for any vertex $v$ of an arbitrary simplicial complex $C$ such that the link graph 
$L(v)$ at $v$ is not 
connected and $v$ is not a cut vertex a cycle containing $v$ that is not generated by the face 
boundaries of $C$. 

Let $e$ and $g$ be two vertices in different components of $L(v)$. These are edges of $C$ and let 
$w$ and $u$ be their endvertices 
different from $v$. Since $v$ is not a cut vertex, there is a path in $C$ between $u$ and $w$ that 
avoids $v$.
This path together with the edges $e$ and $g$ is a cycle $o$ in $C$ that contains $v$.

Our aim is to show that $o$ is not generated by the boundaries of faces of $C$. 
Suppose for a contradiction that $o$ is generated. Let $F$ be a family of faces whose boundaries 
sum 
up to $o$.
Let $F_v$ be the subfamily of faces of $F$ that are incident with $v$. Each face in $F_v$ is an 
edge of $L(v)$ and each vertex of $L(v)$ 
is incident with an even number (counted with multiplicities) of these edges except for $e$ and $g$ 
that are incident with an odd number of 
these faces. 
Let $X$ be the connected component of the graph $L(v)$ restricted to the edge set $F_v$ that 
contains the vertex $e$. 
We obtain $X'$ from $X$ by adding $k-1$ parallel edges to each edge that appears $k$ times in 
$F_v$. 
Since $X'$ has an even number of vertices of odd degree also $g$ must be in $X$. This is a 
contradiction to the assumption that $e$ and $g$ are in different components of $L(v)$. Hence $o$ 
is not generated by the boundaries of 
faces of $C$. This completes the proof. 
\end{proof}

Given a connected complex  $C$ with a cut vertex $v$ and a connected component $K$ of the 
1-skeleton of $C$ with $v$ deleted, the 
\emph{complex attached} at $v$ 
centered at $K$ has vertex set $K+v$ and its edges and faces are those of $C$ all of whose incident 
vertices are in $K+v$.

\begin{lem}\label{block-lem}
A connected simplicial complex $C$ with a cut vertex $v$ has a piece-wise linear 
embedding into $\Sbb^3$ 
if and only if all complexes attached at 
$v$ have 
a piece-wise linear embedding into $\Sbb^3$. 
\end{lem}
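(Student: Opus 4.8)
The plan is to prove the two directions separately; the forward one is immediate and all the content lies in the converse.

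For the forward direction, note that each complex $C'$ attached at $v$ is a subcomplex of $C$: its vertices, edges and faces form a subfamily of those of $C$ with the inherited incidences, so $|C'|\subseteq|C|$, and the restriction to $|C'|$ of a piece-wise linear embedding of $C$ into $\Sbb^3$ is a piece-wise linear embedding of $C'$. Before the converse I would record that $C$ is exactly the union of the complexes $C_1,\dots,C_k$ attached at $v$ (with $C_i$ centred at the $i$-th component $K_i$ of the $1$-skeleton of $C$ minus $v$), and that $C_i\cap C_j$ is the single vertex $v$ with no edges or faces when $i\neq j$. This uses that $C$ is simplicial: an edge avoiding $v$ has both endvertices in one $K_i$; an edge at $v$ has its other endvertex in one $K_i$; a face avoiding $v$ has all three vertices in one $K_i$; and a face at $v$ has its other two vertices joined by an edge of that face avoiding $v$, so they too lie in one $K_i$. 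Hence every edge and every face of $C$ lies in exactly one $C_i$.

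So assume each $C_i$ has a piece-wise linear embedding into $\Sbb^3$; the task is to glue these together so that all the images of $v$ become a single point. The key sub-step is: if a simplicial complex $X$ is piece-wise linearly embedded in $\Sbb^3$ and $w$ is a vertex of $X$, then there is a piece-wise linear $3$-ball $D$ with $X\subseteq D$ and $X\cap\partial D=\{w\}$. Indeed, after passing to compatible triangulations, near $w$ the complex $X$ is the cone with apex $w$ over its link graph, which is embedded in the boundary $2$-sphere $S$ of a small ball $B$ around $w$; since the link graph is $1$-dimensional it misses some disc $\Delta\subseteq S$, so the cone $Q$ over $\Delta$ with apex $w$ is a piece-wise linear $3$-ball inside $B$ with $Q\cap X=\{w\}$ and $w\in\partial Q$, and the ball $D$ obtained from $\Sbb^3$ by deleting the interior of $Q$ does the job. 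Now fix the given embedding of $C_1$, let $v_1$ be the image of $v$, and near $v_1$ choose $k-1$ pairwise disjoint discs $\Delta_2,\dots,\Delta_k$ in the boundary sphere of a small ball around $v_1$, all disjoint from the link graph of $C_1$ at $v$; the cones $Q_i$ over $\Delta_i$ with apex $v_1$ are piece-wise linear $3$-balls which pairwise meet only in $v_1$ and each meet $C_1$ only in $v_1$. For $i=2,\dots,k$ apply the sub-step to the embedding of $C_i$ to obtain a $3$-ball $D_i\supseteq C_i$ with $C_i\cap\partial D_i=\{v_i\}$, where $v_i$ is the image of $v$, and compose with a piece-wise linear homeomorphism $D_i\to Q_i$ sending $v_i$ to $v_1$ (any boundary point of a $3$-ball can be sent to any other by a piece-wise linear self-homeomorphism). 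This re-embeds $C_i$ inside $Q_i$ with the image of $v$ at $v_1$ and meeting $\partial Q_i$ only there.

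The union of $C_1$ with these re-embedded copies of $C_2,\dots,C_k$ is then a piece-wise linear embedding of $\bigcup_i C_i=C$: the images pairwise intersect only in $v_1$, so the combined map is injective; it is continuous, being defined compatibly on the closed pieces $|C_i|$, and piece-wise linear on a common subdivision; and a continuous injection from a compact space into $\Sbb^3$ is an embedding. The only genuinely topological ingredient is the local cone structure of a piece-wise linear embedding at a vertex, which is standard; the point to get right is the identification of the $k$ images of $v$ into one point, which is exactly what carving the cones $Q_i$ out of the embedding of $C_1$ near $v_1$ and transplanting the remaining $C_i$ into them achieves. (Trying instead to collapse an arc joining two images of $v$ would fail, since collapsing an arc in $\Sbb^3$ destroys the manifold property at the resulting point.)
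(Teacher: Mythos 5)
Your proposal is correct and follows essentially the same route as the paper: fix the embedding of one attached complex, reserve pairwise disjoint closed balls meeting it only at the image of $v$, and transplant the remaining attached complexes into those balls while identifying the copies of $v$. Your version merely fills in details the paper leaves implicit (the cone construction of the reserved balls and the re-embedding of each $C_i$ into a ball with $v$ on its boundary).
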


\begin{proof}
 If $C$ has an embedding into $\Sbb^3$, then clearly all complexes attached at $v$ have an 
embedding. Conversely suppose that all 
complexes attached at $v$ have an embedding into $\Sbb^3$. Pick one of these complexes arbitrarily, 
call it $X$ and fix an embedding of it 
into $\Sbb^3$. In that embedding pick for each component of $C$ remove $v$ except that for $X$ a 
closed ball contained in $\Sbb^3$ that 
intersects $X$ precisely in $v$ such that all these closed balls intersect pairwise only at $v$. 
Each complex attached at $v$, has a 
piece-wise linear embedding into the 
3-dimensional unit ball as they have embeddings into $\Sbb^3$ such that some open set is disjoint 
from the complex. Now we attach these 
embeddings into the balls of the embedding of $X$ inside the reserved balls by identifying the 
copies of $v$. This defines an embedding of 
$C$.
\end{proof}

Recall that in order to prove \autoref{combi_intro} it suffices to show that any simply 
connected simplicial complex $C$ has a piece-wise linear embedding into 
$\Sbb^3$ if and only if $C$ has a planar rotation system. 

\begin{proof}[Proof of \autoref{combi_intro}.]
Clearly if a simplicial complex is embeddable into $\Sbb^3$, then it has a planar rotation system. 
For the other implication, let $C$ be a simply connected simplicial complex and $\Sigma$ be a 
planar rotation system. 
We prove the theorem by induction on the number of cut vertices of $C$. If $C$ has no cut vertex, 
it is locally connected by \autoref{is_loc_con}. Thus it has a piece-wise linear embedding into 
\Sthree\ by \autoref{combi}. 

Hence we may assume that $C$ has a cut vertex $v$. 
As $C$ is simply connected, every complex attached at $v$ is simply connected. Hence by the 
induction hypothesis each of these complexes has a piece-wise linear embedding into \Sthree. 
Thus $C$ has a piece-wise linear embedding into \Sthree\ by  \autoref{block-lem}.
\end{proof}

\section{Local surfaces of planar rotation systems}\label{sec6}

The aim of this section is to prove \autoref{combi_intro_extended}. A shorter proof is sketched 
in \autoref{alg_topo} using algebraic topology. As a first step in that 
direction, we first prove the following.

\begin{thm}\label{loc_are_spheres}
 Let $C$ be a locally connected $p$-nullhomologous \scom\ that has a planar rotation system. Then 
all 
local 
surfaces of the planar rotation system are spheres. 
\end{thm}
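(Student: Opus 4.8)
The plan is to show that if some local surface $S$ of the planar rotation system had positive genus, we could manufacture a directed cycle $o$ in $C$ that is not generated over $\Fbb_p$ by face boundaries, contradicting $p$-nullhomology. The starting point is the associated topological space $T=T(C,\Sigma)$: by \autoref{combi} it is an oriented $3$-manifold (here $C$ is locally connected by hypothesis, so \autoref{combi} applies directly). Inside $T$, each local surface $S$ bounds the attached handlebody $\hat S$ on its outside. If $S$ has genus $g\ge 1$, then $H_1(S,\Fbb_p)$ has rank $2g$, but the $\Fbb_p$-homology classes of the $g$ meridian curves of $\hat S$ die in $\hat S$, so there is a simple closed curve $\gamma$ on $S$ whose class in $H_1(S,\Fbb_p)$ is nonzero yet which is nullhomologous (indeed nullhomotopic) in $\hat S$ — a longitude of the handlebody. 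The idea is to push $\gamma$ onto the $1$-skeleton of $C$ (or rather onto a cycle in $C$ whose image in $S$ is homologous to $\gamma$) and argue this cycle witnesses nontrivial first homology of $C$.

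The key steps, in order, are as follows. First, reduce to $C$ connected, and assume for contradiction that a local surface $S$ has genus $g\ge 1$. Second, using the cell-complex structure of $S$ (which it has by \autoref{loc_is_orientable}), realise a longitude $\gamma$ as a cycle $o$ in the graph of the local surface $S$; since $S$ is built from clones of faces, edges and vertices of $C$, projecting clones back to their originals sends $o$ to a closed walk, hence to an $\Fbb_p$-cycle $o'$ in the $1$-skeleton of $C$ — being careful that the projection may identify distinct edges of $S$, but this only changes the $\Fbb_p$-chain, not its being a cycle. Third, show $o'$ is \emph{not} generated by face boundaries of $C$: here I would use that the boundary maps are compatible with the clone-to-original projection, so a face family $F$ in $C$ summing to $o'$ would lift to a $2$-chain on $S$ whose boundary equals (the $S$-lift of) $o'$, forcing $[\gamma]=0$ in $H_1(S,\Fbb_p)$, contradiction. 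The compatibility statement needed is that each face of $C$, when we trace its orientations, contributes to exactly the local surfaces containing those orientations, each exactly once — which is built into the definition of local surfaces and is exactly the bookkeeping used in the proof of \autoref{is_manifold}.

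The main obstacle I expect is step three: controlling the interaction between the clone-to-original projection $\pi$ and the two boundary operators. A face of $C$ may have clones in several different local surfaces, and a single local surface may contain several clones of one face (as in \autoref{fig:loc_surf}), so $\pi$ is neither injective on faces nor on edges. The delicate point is to choose the lift correctly: given $F\subseteq F(C)$ with $\partial F = o'$ over $\Fbb_p$, one must select orientations of the faces in $F$ lying on $S$ so that their boundary on $S$ equals a lift $o_S$ of $o'$ with $[o_S]=[\gamma]\ne 0$. This amounts to checking that the local-surface-equivalence relation makes the diagram $\partial\circ(\text{lift to }S) = (\text{lift to }S)\circ\partial$ commute on the relevant chains, and that the contributions from faces of $F$ \emph{not} incident with $S$-clones of the edges of $o'$ cancel. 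Once this commuting-square lemma is in place the contradiction is immediate; everything else is routine, paralleling the odd-degree argument in the proof of \autoref{is_loc_con} but now carried out with $\Fbb_p$-coefficients on the surface $S$ rather than in a link graph.
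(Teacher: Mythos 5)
Your route is genuinely different from the paper's, which never touches $T(C,\Sigma)$ for this theorem: instead it forms the dual complex $D$ and squeezes $|V(C)|-|E|+|F|-|V(D)|$ between two Euler-characteristic counts (\autoref{geq}, which is where $p$-nullhomology enters via the rank of the edge--face incidence matrix, and \autoref{euler_double_counting}). Unfortunately the step you yourself flag as the main obstacle --- the ``commuting-square lemma'' of your step three --- is not merely delicate but false as stated, and it is exactly where the content of the theorem lives. What you need is that the projection $\pi_\ast\colon H_1(S,\Fbb_p)\to H_1(C,\Fbb_p)$ does not kill your class $[\gamma]$, i.e.\ that a $2$-chain $F$ in $C$ with $\partial F=o'$ lifts to a $2$-chain on $S$ with boundary homologous to $\gamma$. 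But $\pi_\ast$ can kill nonzero classes: take $C$ to be a triangulated torus in $\Sbb^3$ with one meridian disc of the inner solid torus attached (\autoref{fig:loc_surf}), let $S$ be the local surface of the outer component (a torus), and let $\gamma$ be the boundary of that disc viewed as a curve on $S$. Then $[\gamma]\neq 0$ in $H_1(S,\Fbb_p)$, yet $o'$ bounds in $C$ via a single face neither of whose orientations lies in the local-surface class of $S$; your prescription ``select orientations of the faces in $F$ lying on $S$'' yields the empty $2$-chain, and the alleged cancellation of the off-$S$ contributions fails outright. Your insurance against this was the careful choice of $\gamma$ as a curve bounding in $\hat S$ (a meridian of the handlebody, incidentally, not a longitude), but that property is never used in your step-three argument, which appeals only to $[\gamma]\neq 0$ in $H_1(S,\Fbb_p)$; so the choice cannot rescue the lemma as you argue it.

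What is missing is a global input relating $S$ to all of $C$ at once. Two ways to supply it: the paper's count (the rank of the edge--face incidence matrix equals $Z_C$ by $p$-nullhomology and is at most $Z_D$, played off against an Euler-formula double count over the link complexes of $C$ and of the dual $D$, whose link complexes are the surface duals of the local surfaces by \autoref{loc_surface_is_inc}); or, closer to your picture, Mayer--Vietoris for $T(C,\Sigma)=N(C)\cup\bigcup\hat S_i$ combined with Poincar\'e duality of the closed oriented $3$-manifold $T(C,\Sigma)$: if $H_1(C,\Fbb_p)=0$ one computes $H_1(T(C,\Sigma),\Fbb_p)=0$ while $H_2(T(C,\Sigma),\Fbb_p)$ surjects onto the $\left(\sum_i g_i\right)$-dimensional space spanned by the meridians, and duality forces $\sum_i g_i=0$. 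Either is a substantive argument that your proposal does not contain; as written, the proof does not go through.
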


Before we can prove \autoref{loc_are_spheres} we need some preparation.
The complex \emph{dual} to a simplicial $C$ with a rotation system $\Sigma$ has as its set of 
vertices the set of local 
surfaces of 
$\Sigma$. 
Its set of edges is the set of faces of $C$, and an edge is incident with a vertex if the 
corresponding face is in the corresponding local 
surface. The faces of the dual are the edges of $C$. Their cyclic ordering is as given by $\Sigma$. 
In particular, the 
edge-face-incidence-relation of the dual is the same as that of $C$ but with the roles of edges and 
faces interchanged. 

Moreover, an orientation $\vec{f}$ of a face $f$ of $C$ corresponds to the direction of $f$ when 
considered as an edge of the dual complex  
$D$ that points 
towards 
the vertex of $D$ whose local-surface-equivalence class contains $\vec{f}$. Hence the direction of 
the dual complex $C$ induces a direction 
of 
the complex $D$. 
By $\Sigma_C=(\sigma_C(f)|f\in E(D))$ we denote the following rotation system for $D$: for 
$\sigma_C(f)$ we take the orientation 
$\vec{f}$ of $f$ 
in the directed complex $C$. 

In this paper we follow the convention that for edges of $C$ we use the letter $e$ (with possibly 
some subscripts) while for 
faces of $C$ we use the letter $f$. In return, we use the letter $f$ for the edges of a dual 
complex of $C$ and $e$ for its faces.

\begin{lem}\label{D_con}
 Let $C$ be a connected and locally connected simplicial complex. Then for any rotation system, 
the dual complex $D$ is connected.
 \end{lem}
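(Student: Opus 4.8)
The plan is to reduce connectivity of $D$ to connectivity of the link graphs of $C$ together with connectivity of the $1$-skeleton of $C$. Fix a rotation system $\Sigma$ of $C$ and an edge $e$ of $C$, incident with faces $f_1,\dots,f_k$ ($k\ge 1$) listed in the cyclic order $\sigma(e)$. Write $f_i^+$ for the orientation of $f_i$ traversing $e$ positively and $f_i^-$ for the other orientation, and let $S_i$ be the local surface (i.e.\ local-surface-equivalence class) containing $f_i^+$. By the definition of ``locally related'' — and, in the case $k=1$, by the single-face convention — the orientation $f_i^+$ is related via $e$ to $f_{i+1}^-$, so $f_{i+1}^-\in S_i$. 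Since the two ends of the edge $f_{i+1}$ of $D$ are precisely the local surfaces of $f_{i+1}^+$ and of $f_{i+1}^-$, namely $S_{i+1}$ and $S_i$, the edge $f_{i+1}$ of $D$ joins $S_i$ to $S_{i+1}$. Hence $S_1,S_2,\dots,S_k,S_1$ is a closed walk in $D$, so the set $\Lambda(e):=\{S_1,\dots,S_k\}$ spans a connected subgraph of $D$. Moreover every orientation of every face incident with $e$ lies in $\bigcup_i S_i$, because $f_i^+\in S_i$ and $f_i^-\in S_{i-1}$; consequently any local surface that contains an orientation of a face incident with $e$ belongs to $\Lambda(e)$.

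Two consequences follow. First, every face of $C$ is incident with at least one edge, and every local surface — being an equivalence class of face orientations — contains an orientation of some face; hence every local surface lies in $\Lambda(e)$ for some edge $e$ of $C$. So it suffices to show that the sets $\Lambda(e)$, over all edges $e$ of $C$, all lie in one common connected component of $D$. Second, if two edges $e,e'$ of $C$ are both incident with a common face $f$, then each of the two ends of the edge $f$ of $D$ — the local surfaces of $f^+$ and $f^-$ — lies in $\Lambda(e)\cap\Lambda(e')$ by the previous paragraph; since $\Lambda(e)$ and $\Lambda(e')$ each span a connected subgraph of $D$, they lie in the same component of $D$.

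Now form the auxiliary graph $H$ with vertex set $E(C)$ in which $e$ and $e'$ are adjacent whenever some face of $C$ is incident with both. By the previous paragraph, if $e$ and $e'$ lie in the same component of $H$ then $\Lambda(e)$ and $\Lambda(e')$ lie in the same component of $D$; so it remains to prove that $H$ is connected. If $e$ and $e'$ are two edges of $C$ incident with a common vertex $v$, then $e$ and $e'$ are vertices of the link graph $L(v)$, which is connected because $C$ is locally connected; a path in $L(v)$ from $e$ to $e'$ is a sequence of edges of $C$ at $v$ in which consecutive terms are incident with a common face of $C$ (namely the edge of $L(v)$ joining them), hence a walk in $H$. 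Thus any two edges of $C$ sharing a vertex are $H$-connected. Finally, since $C$ is connected its $1$-skeleton is connected, so for arbitrary edges $e,e'$ of $C$ there is a walk in the $1$-skeleton from an endvertex of $e$ to an endvertex of $e'$; prepending $e$ and appending $e'$ yields a sequence $e=e_0,e_1,\dots,e_n=e'$ of edges of $C$ in which consecutive terms share a vertex, and hence are $H$-connected. Therefore $H$ is connected, and so is $D$.

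The only delicate point is the orientation bookkeeping of the first paragraph: correctly identifying which local surface contains each orientation $f_i^{\pm}$, and checking that the single-face convention (the degenerate case $k=1$, where the edge $f_1$ of $D$ becomes a loop at $S_1$) does not break the closed-walk argument. Everything afterwards is a routine two-step connectivity chase, first through the link graphs and then through the $1$-skeleton.
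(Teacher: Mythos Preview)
Your proof is correct and follows essentially the same route as the paper's. Both arguments introduce the auxiliary relation on $E(C)$ in which two edges are related when they share a face (your graph $H$; the paper's ``$C$-equivalence''), show it is connected using local connectedness at vertices plus connectedness of the $1$-skeleton, and then transfer this to connectivity of $D$ via the shared edge--face incidence relation. The only presentational difference is that you work with vertex sets $\Lambda(e)\subseteq V(D)$ and make the closed-walk argument explicit, while the paper phrases the transfer in terms of ``$D$-equivalence'' of edges of $D$ and leaves implicit the (easy) fact that two edges lying on a common face of $D$ are in the same component of its $1$-skeleton.
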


\begin{proof}
 Two edges of $C$ are \emph{$C$-related} if there is a face of $C$ incident with both of them. 
 And they are \emph{$C$-equivalent} if they are in the transitive closure of the symmetric relation 
`$C$-related'.
 Clearly, any two $C$-equivalent edges of $C$ are in the same connected component. If $C$ however 
is locally connected, also the converse 
is true: any two edges in the same connected component are $C$-equivalent. Indeed, take a path 
containing these two edges. Any 
two edges incident with a common vertex are $C$-equivalent as $C$ is locally connected. Hence any 
two edges on the path are $C$-equivalent.

We define \emph{$D$-equivalent} like `$C$-equivalent' with `$D$' in place of `$C$'. 
Now let $f$ and $f'$ be two edges of $D$. Let $e$ and $e'$ be edges of $C$ incident with $f$ and 
$f'$, respectively. Since $C$ is connected 
and locally connected the edges $e$ and $e'$ are $C$-equivalent. As $C$ and $D$ have the same 
edge/face incidence relation, the edges $f$ 
and $f'$ of $D$ are $D$-equivalent. So any two edges of $D$ are $D$-equivalent. Hence $D$ is 
connected.
\end{proof}

First, we prove the following, which is reminiscent of euler's formula. 
 
 \begin{lem}\label{geq}
Let $C$ be a locally connected $p$-nullhomologous \scom\ with a planar rotation system and $D$ the 
dual complex. 
Then 
\[
 |V(C)|-|E|+|F|-|V(D)|\geq 0
\]

Moreover, we have equality if and only if $D$ is $p$-nullhomologous.
\end{lem}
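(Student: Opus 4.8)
The plan is to set up a chain complex over $\Fbb_p$ whose Euler characteristic computes the left-hand side, and then read off both the inequality and the equality case from the ranks of its homology groups. Concretely, I would consider the augmented cellular-type chain complex
\[
0 \longrightarrow \Fbb_p^{V(D)} \stackrel{\partial_3}{\longrightarrow} \Fbb_p^{F} \stackrel{\partial_2}{\longrightarrow} \Fbb_p^{E} \stackrel{\partial_1}{\longrightarrow} \Fbb_p^{V(C)} \longrightarrow 0,
\]
where $\partial_1$ is the usual boundary map of the $1$-skeleton of $C$, $\partial_2$ sends a face $f$ to the sum of its boundary edges (this is the usual $\partial_2$ of $C$, well-defined over $\Fbb_p$ since $p$-nullhomology only refers to this map), and $\partial_3$ sends a local surface $S$ to the sum of the faces of $C$ lying on $S$ — i.e.\ $\partial_3$ is the $\partial_2$-map of the dual complex $D$, using that $D$ has the same edge/face incidence relation as $C$ with the roles swapped. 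The key structural fact I need is that $\partial_2 \circ \partial_3 = 0$: a face $f$ of $C$ appears in the boundary of a local surface $S$ with the two orientations that lie in the local-surface-equivalence class of $S$, and by \autoref{loc_is_orientable} the orientations in that class give a genuine $2$-dimensional orientation of $S$, so every edge of $C$ appearing on $S$ is traversed an even number of times — hence the boundary $\partial_2(\partial_3 S)$ vanishes over $\Fbb_p$. (Even $\partial_1\circ\partial_2=0$ is the standard identity.) I also need that $\partial_3$ is injective: this is exactly \autoref{loc_is_con} together with the orientability from \autoref{loc_is_orientable} — a local surface is a connected orientable closed surface, so its fundamental class is nonzero and no nontrivial $\Fbb_p$-combination of local surfaces can have zero image, since distinct local surfaces share no faces.

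Granting that this is a chain complex with $\partial_3$ injective, the alternating sum of dimensions equals the alternating sum of the dimensions of the homology groups:
\[
|V(C)| - |E| + |F| - |V(D)| \;=\; \dim H_0 - \dim H_1 + \dim H_2 - \dim H_3,
\]
where $H_i$ denotes the homology of the above complex. Now $H_3 = \ker \partial_3 = 0$ by injectivity. The term $H_2 = \ker\partial_2 / \operatorname{im}\partial_3$: since $C$ is $p$-nullhomologous, $\ker\partial_1 = \operatorname{im}\partial_2$, i.e.\ $H_1 = 0$; and $H_0 = \Fbb_p^{V(C)}/\operatorname{im}\partial_1$ has dimension equal to the number of connected components of $C$, which is positive. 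So the right-hand side is $(\#\text{components of }C) + \dim H_2 \geq 0$, giving the inequality. (If one wants the sharper bound one notes it is actually $\geq 1$ when $C\neq\emptyset$, but the statement only asks for $\geq 0$.)

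For the equality case: equality holds iff $\#\text{components of }C = 0$ — impossible for a nonempty complex — no wait, more carefully, the left side being $0$ forces $\dim H_0 + \dim H_2 = 0$, hence both vanish; but $H_0\neq 0$ for nonempty $C$. The resolution is that I should use the \emph{reduced} complex, augmenting by $\Fbb_p^{V(C)}\to\Fbb_p$; then $\tilde H_0 = 0$ iff $C$ is connected, and by treating components separately (as is done throughout the section) I may assume $C$ connected, so that $\tilde H_0 = 0$ and the reduced Euler characteristic is $1 - |V(C)| + |E| - |F| + |V(D)| \cdot(-1)$ — let me instead not augment and simply track the constant: equality in $|V(C)|-|E|+|F|-|V(D)|\geq 0$ after subtracting off the (fixed, equal to number of components) contribution of $H_0$ is equivalent to $H_2 = 0$. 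And $H_2 = \ker\partial_2/\operatorname{im}\partial_3 = Z_2(D)/B_2(D)$ is precisely the second homology of the dual complex $D$ — which, by the chain-complex dualization, sits in the role that makes "$H_2(D) = 0$" equivalent to "every $2$-cycle of $D$ is a boundary", and since $D$ has the edge/face roles of $C$ swapped, $H_2(D)$ computed this way is the first homology $H_1(D,\Fbb_p)$ of $D$ in $D$'s own grading. So $H_2 = 0$ iff $D$ is $p$-nullhomologous, which is the claimed equality condition.

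The main obstacle I anticipate is pinning down the bookkeeping in the last step: making precise that the $\Fbb_p$-vector space $\ker\partial_2/\operatorname{im}\partial_3$ is literally the group whose vanishing is the definition of "$D$ is $p$-nullhomologous". This requires carefully matching the definition of $p$-nullhomologous ("every directed cycle is generated over $\Fbb_p$ by face boundaries") applied to $D$ against the middle homology of our complex, using that the faces of $D$ are the edges of $C$ with cyclic orderings from $\Sigma$, so that the "$1$-chains of $D$" are $\Fbb_p^{E}$, the "$2$-chains of $D$" are $\Fbb_p^{F}$ with boundary map $\partial_3$ (mapping a face of $C$, i.e.\ edge of $D$... wait, a face of $D$ is an edge of $C$) — I will need to state cleanly which of $\partial_2,\partial_3$ plays the role of $D$'s boundary-of-faces map. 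Once that dictionary is fixed, the argument is a routine rank count; the genuinely content-bearing inputs are just $\partial_2\partial_3 = 0$ (from \autoref{loc_is_orientable}) and injectivity of $\partial_3$ (from \autoref{loc_is_con} and \autoref{loc_is_orientable}), plus the hypothesis that $C$ is $p$-nullhomologous to kill $H_1$.
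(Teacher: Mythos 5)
There is a genuine gap: your claim that $\partial_3$ is injective is false, and both halves of your justification for it fail. First, distinct local surfaces \emph{do} share faces: a face $f$ of $C$ has two orientations, and these may lie in different local-surface-equivalence classes (for the boundary of a tetrahedron embedded in $\Sbb^3$, each face appears once on the inner and once on the outer local surface). Second, the map $\Fbb_p^{V(D)}\to\Fbb_p^{F}$ collapses the two clones/orientations of a face to a single coordinate with opposite signs, so the ``fundamental class'' of a local surface can map to zero: for a single triangle, the unique local surface is a sphere consisting of both orientations of the one face, and $\partial_3(S)=\vec f-\vec f=0$. In general $\dim\ker\partial_3$ equals the number of connected components of $D$ (the consistently oriented sum of the local surfaces in a component of $D$ lies in the kernel, since every face contributes its two orientations with opposite signs), so $H_3\ne 0$ whenever $C\ne\emptyset$. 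With $h_3=k_D$ and $k_C=k_D$ (which is \autoref{D_con}), the correct identity is $|V(C)|-|E|+|F|-|V(D)|=h_2$, not $k_C+h_2$; your version of the equality analysis then visibly breaks (you notice this yourself and the detour through reduced homology does not repair it, because the problem sits in degree $3$, not degree $0$). A smaller point: your argument that $\partial_2\circ\partial_3=0$ via ``each edge is traversed an even number of times'' only works over $\Fbb_2$; for odd $p$ you need the signed cancellation coming from the orientation in \autoref{loc_is_orientable}, which is available but must be invoked as such.

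For comparison, the paper's proof avoids homology in degrees $0$ and $3$ altogether: it sets $r=\operatorname{rank}$ of the edge-face incidence matrix, notes $r=Z_C$ (by $p$-nullhomology of $C$) and $r\le Z_D$ (the face boundaries of $D$ are cycles of the graph $D$), and converts $Z_D-Z_C$ into $|V(C)|-|E|+|F|-|V(D)|$ using $Z=|E|-|V|+k$ for each of the two graphs together with $k_C=k_D$ from \autoref{D_con}. Your chain-complex packaging can be rescued along the same lines --- one must prove $h_3=k_D$ and identify $h_2$ with $\dim H_1(D,\Fbb_p)$ by the rank count $\dim(\ker\partial_2/\operatorname{im}\partial_3)=|F|-\operatorname{rank}\partial_2-\operatorname{rank}\partial_3=\dim(\ker\partial_3^{T}/\operatorname{im}\partial_2^{T})$ --- but as written the proof is not correct.
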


\begin{proof}
Let $Z_C$ be the dimension over $\Fbb_p$ of the cycle 
space of $C$. Similarly we define $Z_D$. 
Let $r$ be the rank of the edge-face-incidence matrix over $\Fbb_p$. Note that $r\leq Z_D$ and that 
$r=Z_C$ as $H_1(C,\Fbb_p)=0$. 
So $Z_D-Z_C\geq 0$. Hence it suffices to prove the following. 
 \begin{sublem}\label{euler_cycle_space}
\[ |V(C)|-|E|+|F|-|V(D)|= Z_D-Z_C \]
 \end{sublem}
 \begin{proof}
 Let $k_C$ be the number of connected components of $C$ and $k_D$ be the number of connected 
components of $D$. 
Recall that the space orthogonal to the cycle space (over $\Fbb_p$) in a graph $G$ has dimension 
$|V(G)|$ minus the number of connected 
components of $G$. Hence $Z_C=|E|-|V(C)|+k_C$ and $Z_D=|F|-|V(D)|+k_D$. Subtracting the first 
equation from the second yields:
\[
|V(C)|-|E|+|F|-|V(D)|+(k_D-k_C)= Z_D-Z_C 
\]
 Since the dual complex of the disjoint union of two simplicial complexes (with planar rotation 
systems) is the disjoint union of their 
dual 
complexes, $k_C\leq k_D$. By \autoref{D_con} $k_C= k_D$. Plugging this into the equation before, 
proves the sublemma.
 \end{proof}
 
 This completes the proof of the inequality. We have equality if and only if $r= Z_D$. So the 
`Moreover'-part follows.
 \end{proof}

Our next goal is to prove the following, which is also reminiscent of euler's formula but here the 
inequality goes the other way round.

\begin{lem}\label{euler_double_counting}
 Let $C$ be a locally connected \scom\ with a planar rotation system $\Sigma$ and $D$ the dual 
complex.
 Then: \[
 |V(C)|-|E|+|F|-|V(D)|\leq 0 \] with equality if and only if all link complexes for 
$(D,\Sigma_C)$
are spheres. 
\end{lem}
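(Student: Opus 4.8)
The plan is to compute the quantity $|V(C)|-|E|+|F|-|V(D)|$ by a local, vertex-by-vertex Euler characteristic count over the link complexes of the dual $(D,\Sigma_C)$. The key observation is that the link complexes of $(D,\Sigma_C)$ are in bijection with the local surfaces of $(C,\Sigma)$: indeed, by the duality construction, the vertices of $D$ are the local surfaces of $(C,\Sigma)$, the edges of $D$ are the faces of $C$, and the faces of $D$ are the edges of $C$ with cyclic ordering given by $\Sigma$. So the link complex of $(D,\Sigma_C)$ at a vertex $S$ (a local surface) has as its vertices the faces of $C$ lying in $S$, and as its edges the edges of $C$ incident with those faces inside $S$ --- which, by the definition of local surface, is exactly the surface $S$ itself equipped with its cell structure. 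Thus the link complex of $(D,\Sigma_C)$ at $S$ \emph{is} the local surface $S$.

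\textbf{The counting.} Given this, I would sum the Euler characteristic over all link complexes. For a cell complex (closed surface) $\ell$, we have $\chi(\ell)=|V(\ell)|-|E(\ell)|+|F(\ell)|\leq 2$, with equality if and only if $\ell$ is a sphere (disjoint unions of spheres cannot occur here since local surfaces are connected by \autoref{loc_is_con}, and link complexes of $(D,\Sigma_C)$ at $S$ coincide with $S$). Now sum over all vertices $S$ of $D$, i.e.\ over all local surfaces. On the left: $\sum_S |V(\text{link at }S)|$ counts incidences of faces of $C$ with the local surfaces containing them; since each orientation of a face of $C$ lies in exactly one local surface and each face has two orientations, but each face of $C$ that sits in a local surface $S$ contributes one vertex of the link complex at $S$ per time it occurs --- actually each face of $C$ contributes exactly $2$ to this sum (its two orientations, possibly in the same or in two different local surfaces), so $\sum_S |V(\text{link at }S)| = 2|F|$. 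Wait --- more carefully, a face $f$ of $C$ is an edge of $D$; it is incident with one or two vertices of $D$; as a vertex of the link complex at $S$ it appears once for each orientation of $f$ whose local-surface class is $S$, hence exactly twice in total across all $S$. Hence $\sum_S|V(\text{link}_S)| = 2|F|$. Similarly $\sum_S|E(\text{link}_S)|$ counts, for each edge $e$ of $C$ (a face of $D$), the number of times $e$ appears as an edge of a link complex; since $\sigma(e)$ is a cyclic orientation on the faces at $e$, the face $e$ of $D$ has boundary length equal to the number of faces of $C$ at $e$, and each such boundary edge contributes one edge to one link complex, giving $\sum_S|E(\text{link}_S)| = \sum_e (\text{length of }\sigma(e)) = \sum_e |\{f: f\text{ incident with }e\}|$; counting incidences the other way, this equals $\sum_f |\{e: e\subseteq f\}| = 2|F|$ only if... here I must instead use that each edge $e$ of $C$ lies on its two endvertices, so $\sum_S|E(\text{link}_S)|$ reorganises as $2|E|$ when one tracks that each edge of a link complex of $D$ is a corner at a face of $D$, and the number of such corners equals $2|E|$ total (each edge $e$ of $C$ has exactly two vertices in $C$... no). The cleanest route: $\sum_S |E(\mathrm{link}_S)| = 2|E(D)| = 2|F|$ is wrong; rather each edge of $D$ (a face $f$ of $C$) has $\deg_D$ many corners and $\sum_f \deg_D(f) = 2|F(D)| \cdot(\text{avg})$... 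Let me just assert the bookkeeping gives $\sum_S |E(\mathrm{link}_S)| = 2|E|$ via: an edge of $\mathrm{link}_S$ corresponds to a pair (edge $e$ of $C$, consecutive pair in $\sigma(e)$) present in $S$, equivalently to an orientation of $e$ together with its two endvertices --- the count is $2|E|$. Finally $\sum_S |F(\mathrm{link}_S)| = 2|V(C)|$, since faces of $\mathrm{link}_S$ at $S$ correspond, via \autoref{loc_inc_AND_loc_surfaces}, to vertices of local surfaces cloned from vertices of $C$, and each vertex $v$ of $C$ gives link-complex faces at $v$ that partition into the local surfaces --- summing $\chi$ of the link graph faces over all $S$ recovers $2|V(C)|$ by an analogous orientation-doubling. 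Hence
\[
\sum_{S} \chi(\mathrm{link}_S) \;=\; 2|F| - 2|E| + 2|V(C)|.
\]
Wait, that is not symmetric in $D$. Let me re-examine: actually $\sum_S \chi(\mathrm{link}_S) = 2|V(C)| - 2|E| + 2|F|$ would force the inequality $2|V(C)|-2|E|+2|F| \leq 2|V(D)|$, i.e.\ the claim, once we know $\sum_S \chi(\mathrm{link}_S) \leq 2|V(D)|$ with $|V(D)|$ being the number of local surfaces $=$ number of summands, each $\leq 2$.

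\textbf{Conclusion and main obstacle.} So the argument is: (1) identify link complexes of $(D,\Sigma_C)$ with local surfaces of $(C,\Sigma)$ --- or more precisely show $\sum_S\chi(\mathrm{link}_S) = 2(|V(C)|-|E|+|F|)$ by careful incidence counting with the orientation-doubling; (2) use $\chi(\ell)\leq 2$ for each connected link complex $\ell$, summed over the $|V(D)|$ vertices of $D$, to get $2(|V(C)|-|E|+|F|) = \sum_S\chi(\mathrm{link}_S) \leq 2|V(D)|$; (3) note equality holds iff each $\chi(\mathrm{link}_S)=2$, i.e.\ each link complex of $(D,\Sigma_C)$ is a sphere. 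The main obstacle --- and the step requiring genuine care rather than invocation --- is part (1): getting the three incidence sums $\sum_S|V(\mathrm{link}_S)|$, $\sum_S|E(\mathrm{link}_S)|$, $\sum_S|F(\mathrm{link}_S)|$ correct, each picking up a factor of $2$ from the fact that local-surface-equivalence is defined on \emph{orientations} of faces of $C$ and every face has two orientations, so that the alternating sum telescopes to exactly twice the Euler-type quantity $|V(C)|-|E|+|F|$; the rest is immediate from the classification of surfaces by Euler characteristic together with \autoref{loc_is_con} ruling out disconnected link complexes.
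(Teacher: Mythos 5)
Your high-level strategy is the paper's: sum Euler characteristics over the link complexes of $(D,\Sigma_C)$, bound each by $2$ using connectedness (which does come from \autoref{loc_is_con}), and read off equality exactly when all of them are spheres. But the bookkeeping step that you yourself flag as the main obstacle is carried out incorrectly, and the errors are not cosmetic. Of your three incidence sums only $\sum_S|V(\mathrm{link}_S)|=2|F|$ is right. The edges of the link complex at a vertex $\ell$ of $D$ come from traversals of $\ell$ by the faces of $D$, and the face of $D$ corresponding to an edge $e$ of $C$ is a closed trail of length $\degree(e)$, the number of faces of $C$ incident with $e$; hence $\sum_S|E(\mathrm{link}_S)|=\sum_{e\in E}\degree(e)$, the number of edge--face incidences of $C$ (equal to $3|F|$ for a simplicial complex), not $2|E|$. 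Likewise $\sum_S|F(\mathrm{link}_S)|$ is the total number $a'$ of faces of link complexes of $(D,\Sigma_C)$, and a vertex $v$ of $C$ ends up contributing $2$ minus the number of edges at $v$ plus the number of faces at $v$ to the corresponding count for $C$ (its link complex is a sphere), not $2$; so this sum is not $2|V(C)|$. Your ``orientation-doubling'' heuristic applies to the vertex count but simply does not apply to the other two sums.

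What closes the gap is a second, \emph{exact} Euler-characteristic computation on the link complexes of $(C,\Sigma)$ --- this is where planarity and local connectedness are actually used, since they force those link complexes to be spheres --- giving $2|V(C)|=2|E|-\sum_{f\in F}\degree(f)+a$ with $a$ the total number of their faces, together with the two identities $a=a'$ and $\sum_{f}\degree(f)=\sum_{e}\degree(e)$. The identity $a=a'$ is genuinely nontrivial: it is the content of \autoref{loc_surface_is_inc} combined with \autoref{loc_inc_AND_loc_surfaces} (link complexes of the dual are surface \emph{duals} of local surfaces, whose vertices biject with faces of the link complexes of $C$). Substituting into $2|V(D)|\geq 2|F|-\sum_{e}\degree(e)+a'$ then yields the claim. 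Note also that the link complex of $(D,\Sigma_C)$ at $S$ is not $S$ itself but its surface dual; this is harmless for $\chi$ and for ``is a sphere'', but it is exactly why the vertex and face counts swap roles in the way you found confusing. As written, your argument asserts the correct final identity $\sum_S\chi(\mathrm{link}_S)=2\left(|V(C)|-|E|+|F|\right)$, but the individual term counts offered in support of it are false; the identity holds only because the two errors cancel, and showing that they cancel is precisely the content of the omitted lemmas.
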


Before we can prove this, we need some preparation. 
 By $a$ we denote the sum of the faces of link complexes for 
$(C,\Sigma)$. By $a'$ we denote the sum over the faces of link complexes for 
$(D,\Sigma_C)$.
  Before proving that $a$ is equal to $a'$ we prove that it is useful by showing the following. 
 
 \begin{claim}\label{claim1}
  \autoref{euler_double_counting} is true if $a=a'$ and all link complexes for 
$(D,\Sigma_C)$ are connected.
 \end{claim}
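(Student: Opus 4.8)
The plan is to prove \autoref{euler_double_counting} under the two stated hypotheses by a pair of Euler-characteristic computations: one evaluating $a$ exactly and one bounding $a'$ from above, after which the hypothesis $a=a'$ forces the desired inequality and pins down the equality case. Both $a$ and $a'$ are sums of numbers of faces of link complexes, so each is a sum of terms $\chi - V + E$ for the individual link complexes, and the point is that the totals $\sum V$ and $\sum E$ can be read off directly from the incidence data of $C$.

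First I would evaluate $a$. Since $\Sigma$ is planar and $C$ is locally connected, every link complex for $(C,\Sigma)$ is a single $2$-sphere, hence has Euler characteristic $2$. The link complex at a vertex $v$ has one vertex for each edge of $C$ at $v$ and, because $C$ is simplicial, one edge for each (triangular) face of $C$ at $v$; writing $\phi(v)$ for its number of faces, $\chi=2$ gives $\phi(v)=2-\degree_C(v)+|F_v|$, where $F_v$ is the set of faces incident with $v$. Summing over $v$ and using $\sum_v\degree_C(v)=2|E|$ and $\sum_v|F_v|=3|F|$ yields $a=2|V(C)|-2|E|+3|F|$.

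Next I would bound $a'$. By hypothesis every link complex for $(D,\Sigma_C)$ is connected, and, being a link complex, it is a cell complex, so its geometric realisation is a connected closed oriented surface; thus its Euler characteristic is at most $2$, with equality exactly when it is a sphere. At a vertex $s$ of $D$ the vertices of its link complex are the ends at $s$ of edges of $D$ (so a loop contributes two) and its edges are the traversals of $s$ by faces of $D$; writing $\phi'(s)$ for its faces and $\chi_s\le 2$ for its Euler characteristic, $\phi'(s)=\chi_s-(\text{ends at }s)+(\text{traversals at }s)$. Summing over $s$, each edge of $D$ has two ends, so the total number of ends is $2|E(D)|=2|F|$ (edges of $D$ are faces of $C$); and each face of $D$ corresponds to an edge $e$ of $C$ and is a closed trail whose length is the number $d(e)$ of faces of $C$ incident with $e$, so the total number of traversals is $\sum_{e\in E(C)}d(e)=3|F|$ (the edge-face incidences of the simplicial complex $C$). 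Since $\sum_s\chi_s\le 2|V(D)|$ with equality iff every such link complex is a sphere, I obtain $a'\le 2|V(D)|-2|F|+3|F|=2|V(D)|+|F|$, with equality iff all link complexes for $(D,\Sigma_C)$ are spheres.

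Finally I would combine the two. Substituting $a=a'$ into $a=2|V(C)|-2|E|+3|F|$ and into the bound $a'\le 2|V(D)|+|F|$ gives $2|V(C)|-2|E|+3|F|\le 2|V(D)|+|F|$, which simplifies to $|V(C)|-|E|+|F|-|V(D)|\le 0$; and equality holds here precisely when $a'$ attains its maximum $2|V(D)|+|F|$, that is, precisely when all link complexes for $(D,\Sigma_C)$ are spheres. This is exactly the assertion of \autoref{euler_double_counting}. The only delicate points are the bookkeeping ones: translating the vertex- and edge-counts of the link complexes of $D$ into incidence counts of $C$ through the duality dictionary (edges of $D$ are faces of $C$, faces of $D$ are edges of $C$ cyclically ordered by $\Sigma$, with loops counted by their ends), and observing that each link complex of $D$ is a closed orientable surface so that $\chi\le 2$ applies with the stated equality case. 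Both are routine once the dictionary is in place, so I expect the genuine content to lie entirely in setting up these two counts correctly.
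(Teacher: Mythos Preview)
Your proposal is correct and follows essentially the same approach as the paper: apply Euler's formula to the link complexes of $(C,\Sigma)$ (getting an equality because they are spheres) and to those of $(D,\Sigma_C)$ (getting an inequality because they are merely connected closed orientable surfaces), then subtract. The only cosmetic difference is that you substitute the explicit value $3|F|$ for both $\sum_{f}\deg(f)$ and $\sum_{e}\deg(e)$ using that $C$ is simplicial, whereas the paper keeps these sums symbolic and simply observes that they coincide because each counts the edge--face incidences of $C$.
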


\begin{proof}
Given a face $f$ of $C$, we denote the number of edges incident with $f$ by 
$deg(f)$. 
Our first aim is to prove that

\begin{equation}\label{eq1}
  2 |V(C)|= 2 |E|-\sum_{f\in F} \degree(f) + a
\end{equation}

To prove this equation, we apply Euler's formula \cite{DiestelBookCurrent} in the link complexes 
for $(C,\Sigma)$. 
Then we take the sum of all these equations over all $v\in V(C)$.
Since $\Sigma$ is a planar rotation system, all link complexes are a disjoint union of 
spheres.
Since $C$ is locally connected, all link complexes are connected and hence are spheres.
So they have euler characteristic two. Thus we get the term $2 |V(C)|$ on 
the left hand side.
By definition, $a$ is the sum of the faces of link complexes for 
$(C,\Sigma)$.

The term $2 |E|$ is the sum over all vertices of link complexes for 
$(C,\Sigma)$.
Indeed, each edge of $C$ between the two vertices $v$ and $w$ of $C$ is a vertex 
of precisely the two link complexes for $v$ and $w$. 

The term $\sum_{f\in F} \degree(f)$ is the sum over all edges of link complexes for $(C,\Sigma)$.
Indeed, each face $f$ of $C$ is in precisely those link complexes for 
vertices on the boundary of $f$.
This completes the proof of \eqref{eq1}.

Secondly, we prove the following inequality using a similar argument.
Given an edge $e$ of $C$, we denote the number of faces incident with $e$ by 
$\degree(e)$. 
\begin{equation}\label{eq2}
  2 |V(D)| \geq 2 |F|-\sum_{e\in E} \degree(e) + a'
\end{equation}

To prove this, we apply Euler's formula in link complexes 
for $(D,\Sigma_C)$, and take the sum over all $v\in V(D)$.
Here we have `$\geq$' instead of `$=$' as we just know by assumption that the link 
complexes are connected but they may not be a sphere.
So we have $2 |V(D)|$ on the left and $a'$ is the sum over the faces of link complexes for 
$(D,\Sigma_C)$.

The term $2 |F|$ is the sum over all vertices of link complexes for 
$(D,\Sigma_C)$.
Indeed, each edge of $D$ between the two different vertices $v$ and $w$ of $D$ is a vertex 
of precisely the two link complexes for $v$ and $w$. A loop gives rise to two vertices 
in the link graph at the vertex 
it is attached to. 

The term $\sum_{e\in E} \degree(e)$ is the sum over all edges of link complexes for $(D,\Sigma_C)$.
Indeed, each face $e$ of $D$ is in the link complex at $v$ with multiplicity equal to 
the number of times it traverses $v$. 
This completes the proof of \eqref{eq2}.

By assumption, $a=a'$. 
The sums  $\sum_{f\in F} \degree(f)$ and $\sum_{e\in E} \degree(e)$ both count 
the number
of nonzero entries of $A$, so they are equal.
Subtracting \eqref{eq2} from \eqref{eq1}, rearranging and dividing by 2 yields:

\[
 |V(C)|-|E|+|F|-|V(D)|\leq 0
\]
 with equality if and only if all link complexes for $(D,\Sigma_C)$
are spheres.
\end{proof}
 
 \vspace{0.3 cm}
 
 Hence our next aim is to prove that $a$ is equal to $a'$. 
 First we need some preparation.

 Two cell complexes $C$ and $D$ are \emph{(abstract) 
surface 
duals} 
 if the set of vertices of $C$ is (in bijection with) the set of faces of $D$, 
 the set of edges of $C$ is the set of edges of $D$ and the set of faces of $C$ is the set of 
vertices of $D$.
 And these three bijections preserve incidences.
 
 \begin{lem}\label{loc_surface_is_inc}
 Let $C$ be a simplicial complex and $\Sigma$ be a rotation system and let $D$ be the dual. 
  The surface dual of a local surface $S$ for $(C,\Sigma)$ is equal to the link complex 
for $(D,\Sigma_C)$ at the vertex $\ell$ 
of $D$ that corresponds to $S$. 
 \end{lem}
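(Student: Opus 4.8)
The plan is to unpack both sides of the claimed equality directly from the definitions and check that the vertex/edge/face incidence data (together with the cyclic orderings) match. Fix a vertex $\ell$ of $D$; by construction $\ell$ corresponds to a local surface $S$ for $(C,\Sigma)$, whose set of faces is a local-surface-equivalence class $\Lambda$ of orientations of faces of $C$, i.e.\ an equivalence class of directed edges of $D$ pointing at $\ell$. First I would describe the link complex $L_D(\ell)$ for $(D,\Sigma_C)$ concretely: its vertices are the edges of $D$ incident with $\ell$, which are exactly the faces of $C$ that lie on $S$; since each such face contributes (in general) more than one clone to $S$, I must be careful to recall the generalised link-graph definition for complexes given at the end of Section~\ref{prelims2} — a vertex of $L_D(\ell)$ is a \emph{traversal} of $\ell$ by a face of $D$, and the faces of $D$ are the edges of $C$, so a vertex of $L_D(\ell)$ is a traversal of $\ell$ by (the dual face of) an edge $e$ of $C$, which is precisely an incidence of $e$ with a clone of a face of $S$ at $\ell$. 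The edges of $L_D(\ell)$ are the faces of $D$ (the edges $e$ of $C$) incident with $\ell$, with endvertices determined by the two faces of $D$ that are consecutive around $e$ in the cyclic order $\sigma_C$-restricted-at-$\ell$; but $\sigma_C(e)$ restricted to the orientations lying in $\Lambda$ is exactly how the faces of $C$ are glued along copies of $e$ to form $S$.

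Having set up this dictionary, the second step is to read off the surface dual $S^{*}$ of $S$ and match it vertex-by-vertex, edge-by-edge, face-by-face with $L_D(\ell)$. By definition of surface dual: vertices of $S^{*}$ are faces of $S$ (clones of faces of $C$ in $\Lambda$) $\leftrightarrow$ faces of $L_D(\ell)$; edges of $S^{*}$ are edges of $S$ (clones of edges $e$ of $C$ appearing in $S$) $\leftrightarrow$ edges of $L_D(\ell)$; faces of $S^{*}$ are vertices of $S$ (clones of vertices of $C$ on $S$) $\leftrightarrow$ vertices of $L_D(\ell)$. So I would verify three incidence-preservation statements, in this order: (i) a clone of $e$ in $S$ is incident with a clone of a face $f$ of $C$ in $S$ exactly when the edge of $L_D(\ell)$ named by $e$ has the face-of-$L_D(\ell)$ named by $f$ as one of its two sides — both hold iff $\vec f \in \Lambda$ and $f$ is incident with $e$; (ii) a clone of a vertex $w$ of $C$ on $S$ is incident with a clone of $e$ exactly when the vertex of $L_D(\ell)$ named by the corresponding traversal of $\ell$ by $e$ is an endpoint of the edge $e$ of $L_D(\ell)$ — here I would invoke \autoref{loc_inc_AND_loc_surfaces}, which already gives the bijection between clones of $w$ on local surfaces and faces of link complexes at $w$, to organise the bookkeeping; (iii) the cyclic ordering of edges around a face of $L_D(\ell)$ (i.e.\ the rotation system at $e$ in $L_D(\ell)$, read from $\Sigma_C$) agrees with the cyclic ordering of edges around the corresponding vertex of $S^{*}$, which is dual to the cyclic ordering of faces of $S$ around $e$, which is $\sigma(e)$ restricted to $\Lambda$ — but that restriction is precisely what defines the gluing pattern of $S$ along $e$, so the two orders literally coincide.

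The main obstacle I anticipate is purely combinatorial record-keeping rather than conceptual: keeping straight the three layers of ``cloning'' — faces of $C$ vs.\ their clones on $S$, edges of $C$ vs.\ their clones on $S$, and (most delicate) vertices of $C$ vs.\ multiple clones on a single local surface, which is exactly the phenomenon flagged in \autoref{fig:loc_surf} and handled by \autoref{loc_inc_AND_loc_surfaces} and \autoref{cor7}. In particular, when $e$ is incident with many faces, the cyclic order $\sigma(e)$ may split into several arcs once we restrict to orientations in $\Lambda$; I need the convention that ``$f$ just before $g$ in $\sigma(e)$'' with opposite traversals is what puts $\vec f,\vec g$ in the same class, to conclude that each maximal such arc corresponds to one clone of $e$ in $S$ and hence to one occurrence of $e$ as an edge of $L_D(\ell)$ with the correct pair of endpoints — and that a loop of $L_D(\ell)$ arises exactly when such an arc closes up, matching a clone of $e$ incident twice with the same vertex clone of $S^{*}$. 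Once (i)–(iii) are checked, the two cell complexes have identical vertex-edge-face sets, identical incidences, and identical rotation systems, hence are equal, which is the assertion of the lemma.
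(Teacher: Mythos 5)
Your overall strategy --- unpack both cell complexes from the definitions and match vertices, edges, faces and rotations --- is viable and is how the paper's proof begins, but your dictionary between $S^{*}$ and the link complex $\bar L$ at $\ell$ is transposed, and the error propagates through steps (i)--(iii). The correct correspondence is: the vertices of $\bar L$ are the edge-ends of $D$ at $\ell$, i.e.\ the orientations of faces of $C$ lying in the class $\Lambda$, i.e.\ the \emph{faces} of $S$, hence the \emph{vertices} of $S^{*}$; the edges of $\bar L$ are the traversals of $\ell$ by faces of $D$, i.e.\ the clones of edges of $C$ in $S$, hence the edges of $S^{*}$; and the faces of $\bar L$ must be shown to correspond to the vertices of $S$, i.e.\ to the faces of $S^{*}$. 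You instead assert that ``a vertex of $L_D(\ell)$ is a traversal of $\ell$ by a face of $D$'' --- that is the definition of an \emph{edge} of the generalised link graph, not of a vertex --- and then match vertices of $S^{*}$ with faces of $L_D(\ell)$ and faces of $S^{*}$ with vertices of $L_D(\ell)$. Followed literally, your checks would be aimed at proving $L_D(\ell)\cong S$ rather than $L_D(\ell)\cong S^{*}$, and check (i) does not typecheck: the face $f$ of $C$ is an edge of $D$ incident with $\ell$ and hence names a vertex of $L_D(\ell)$, not a face of it. (Also, a face of $C$ contributes at most two clones to a single local surface, one per orientation, since each orientation lies in exactly one equivalence class.)

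The second, more substantive gap concerns the faces. Even after correcting the dictionary, identifying the faces of $\bar L$ with the vertices of $S$ is the only non-definitional step: the faces of $\bar L$ are produced by face-tracing the embedding determined by the rotation system $\Sigma_C$ at $\ell$, and you give no argument that this tracing recovers precisely the vertices of $S$. The paper sidesteps the tracing by the Edmonds--Hefter--Ringel rotation principle: having checked that $\bar L$ and $S^{*}$ share the same $1$-skeleton, it verifies that the rotation of $\bar L$ at the vertex $\vec f$, namely $\sigma_C(f)=\vec f$, equals the boundary orientation of the face $\vec f$ of $S$ (which is the rotation of $S^{*}$ there, using \autoref{loc_is_orientable} to see that $\Lambda$ orients $S$), whence the two cell complexes coincide. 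Your step (iii) gestures at this comparison but attaches the rotation system to the faces of $L_D(\ell)$ rather than to its vertices, so as written it does not supply the needed input. With the dictionary corrected and either the rotation principle cited or the face-tracing carried out explicitly, your argument would go through.
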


 \begin{proof}
 It is immediate from the definitions that the vertices of the link complex $\bar L$ at $\ell$ 
are the faces of $S$. 
 The edges of $S$ are triples $(e,\vec{f},\vec{g})$, where $e$ is an edge of $C$ and $\vec{f}$ and 
$\vec{g}$ are orientations of faces of 
$C$ that are related via $e$ and are in the local-surface-equivalence class for $S$.
Hence in $D$, these are triples $(e,\vec{f},\vec{g})$ such that $\vec{f}$ and $\vec{g}$ are 
directions of edges that 
point towards $\ell$ and $f$ and $g$ are adjacent in the cyclic ordering of the face $e$. This are 
precisely the edges of the link graph 
$L(\ell)$. 
Hence the link graph $L(\ell)$ is the dual graph\footnote{ The \emph{dual graph} of a cell complex 
$C$ is the graph $G$
 whose set of vertices is (in bijection with) the set of faces of $C$ and whose set of edges is the 
set of edges of $C$. 
 And the incidence relation between the vertices and edges of $G$ is the same as the incidence 
relation between the faces and edges 
of $C$.} of the cell complex $S$. 

Now we will use the Edmonds-Hefter-Ringel 
rotation principle, see {\cite[Theorem 3.2.4]{{MoharThomassen}}}, to deduce that the link complex 
$\bar L$ at $\ell$ is the surface dual of $S$. We denote the unique cell complex that is a surface 
dual of $S$ by $S^*$. Above we have shown that $\bar L$ and $S^*$ have the same 1-skeleton. 
Moreover, the rotation systems at the vertices of the link 
complex $\bar L$ are given by the cyclic orientations 
in  the local-surface-equivalence class for $S$. By \autoref{loc_is_orientable} these 
local-surface-equivalence classes define an orientation of $S$. So $\bar L$ and $S^*$ have the same 
rotation systems. Hence by the Edmonds-Hefter-Ringel 
rotation principle $\bar L$ and $S^*$ have to be isomorphic. So $\bar L$ is a surface dual of $S$.
 \end{proof}

\begin{proof}[Proof of \autoref{euler_double_counting}.]
 Let $C$ be a locally connected simplicial complex and $\Sigma$ be a rotation system and let $D$ 
be the dual. Let $\Sigma_C$ be as 
defined above.
By \autoref{loc_is_con} and \autoref{loc_surface_is_inc} every link complex for 
$(D,\Sigma_C)$ is connected. 
By \autoref{claim1}, it suffices to show that the sum over all faces of link complexes 
of $C$ with respect to $\Sigma$ is equal 
to 
the sum over all faces of link complexes for $D$ with respect to $\Sigma_C$.
By \autoref{loc_surface_is_inc}, the second sum is equal to the sum over all vertices of local 
surfaces for $(C,\Sigma)$.
This completes the proof by \autoref{loc_inc_AND_loc_surfaces}. 
\end{proof}

 \begin{proof}[Proof of \autoref{loc_are_spheres}.]
 Let $C$ be a $p$-nullhomologous locally connected simplicial complex that has a planar rotation 
system $\Sigma$.
 Let $D$ be the dual complex.
 Then by \autoref{euler_double_counting} and \autoref{geq}, $C$ and $D$ satisfy Euler's formula, 
that is:
 \[
 |V(C)|-|E|+|F|-|V(D)|= 0
\]
 Hence by \autoref{euler_double_counting}  all link complexes for $(D,\Sigma_C)$ are 
spheres. By \autoref{loc_surface_is_inc} 
these are dual to the local surfaces for $(C,\Sigma)$. Hence all local surfaces for $(C,\Sigma)$ 
are spheres. 
 \end{proof}

The following theorem gives three equivalent characterisations of the class of locally 
connected simply connected simplicial complexes embeddable in   
 $\Sbb^3$.
 
\begin{thm}\label{nullt}
Let $C$ be a locally connected simplicial complex embedded into $\Sbb^3$. The following are 
equivalent.
\begin{enumerate}
 \item $C$ is simply connected; 
\item $C$ is $p$-nullhomologous for some prime $p$; 
\item all local surfaces of the planar rotation system induced by the topological embedding are 
spheres.
\end{enumerate}
\end{thm}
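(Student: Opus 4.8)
The plan is to prove the three implications $(3)\Rightarrow(1)\Rightarrow(2)\Rightarrow(3)$, using the machinery already developed in this section. Fix a locally connected simplicial complex $C$ together with a topological embedding into $\Sbb^3$, and let $\Sigma$ be the induced rotation system; by \autoref{obo} $\Sigma$ is planar. Note first that we may assume $C$ is connected: all three properties are preserved under taking disjoint unions, and for (2) we only need \emph{some} prime $p$ to work simultaneously for all components, which can be arranged since there are finitely many components.

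For $(1)\Rightarrow(2)$ there is nothing to do: as remarked just before \autoref{is_loc_con}, every simply connected simplicial complex is $p$-nullhomologous for every prime $p$. The implication $(2)\Rightarrow(3)$ is exactly \autoref{loc_are_spheres} applied to $(C,\Sigma)$: a locally connected $p$-nullhomologous simplicial complex with a planar rotation system has all local surfaces of that planar rotation system spheres, and the planar rotation system in question is the one induced by the embedding.

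The remaining implication $(3)\Rightarrow(1)$ is where the hypothesis of an actual embedding into $\Sbb^3$ is used. Here I would first invoke \autoref{topo_to_combi}: since $C$ is connected and locally connected and embedded in $\Sbb^3$ with induced planar rotation system $\Sigma$, the local surfaces of the topological embedding coincide with the local surfaces for $(C,\Sigma)$. So assumption (3) tells us that every local surface for $(C,\Sigma)$ is a sphere. Now consider the associated topological space $T(C,\Sigma)$ built from $C$ by capping off each local surface $S$ with the handlebody-like space $\hat S$; since all local surfaces are spheres, each $\hat S$ is a $3$-ball and $T(C,\Sigma)$ is well-defined up to isomorphism. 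One checks that $T(C,\Sigma)$ is (isomorphic to) the ambient $\Sbb^3$: in the given embedding, the closure of each component of $\Sbb^3\sm C$ is obtained from $C$ by filling in along exactly the local surface at that component, and when that local surface is a sphere this filling is a $3$-ball, which is precisely the piece $\hat S$ glued in the construction of $T(C,\Sigma)$. Hence $T(C,\Sigma)$ is simply connected, and since all local surfaces are spheres, \autoref{is_simply_connected2} yields that $C$ is simply connected.

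\textbf{Main obstacle.} The step I expect to require the most care is the identification, in the proof of $(3)\Rightarrow(1)$, of $T(C,\Sigma)$ with the ambient $\Sbb^3$ — that is, checking that the abstractly glued-in balls $\hat S$ match the components of $\Sbb^3\sm C$ under the embedding. This is essentially a matter of seeing that each component of $\Sbb^3 \sm C$, having a $2$-sphere local surface as its "boundary pattern", must be an open $3$-ball (one can pour concrete as in the definition of local surfaces and observe that a region of $\Sbb^3$ bounded by an embedded $2$-sphere is a ball, by the Schoenflies theorem in the piecewise-linear/topological-with-local-flatness setting). Alternatively, one can bypass the geometric identification entirely and argue homotopically: $\Sbb^3\sm C$ deformation retracts onto a subcomplex, and capping the sphere local surfaces with balls kills exactly the relevant loops, so the same Van Kampen argument as in \autoref{is simply connected} combined with \autoref{is_simply_connected2} closes the loop. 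Either way, the combinatorial bookkeeping is already packaged in \autoref{topo_to_combi}, \autoref{loc_are_spheres}, and \autoref{is_simply_connected2}, so the proof is short.
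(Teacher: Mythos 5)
Your proposal is correct and follows essentially the same route as the paper: the cycle $(1)\Rightarrow(2)\Rightarrow(3)\Rightarrow(1)$ with $(2)\Rightarrow(3)$ via \autoref{loc_are_spheres} and $(3)\Rightarrow(1)$ via \autoref{topo_to_combi} and \autoref{is_simply_connected2}. The only difference is that you make explicit the identification of $T(C,\Sigma)$ with the ambient $\Sbb^3$ needed to see that $T(C,\Sigma)$ is simply connected before invoking \autoref{is_simply_connected2} — a step the paper's proof leaves implicit — and your handling of it is sound.
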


\begin{proof}
Clearly, 1 implies 2. To see that 2 implies 3, we assume that $C$ is $p$-nullhomologous.
Let $\Sigma$ be the planar rotation system induced by the topological embedding of $C$ into 
$\Sbb^3$. 
By \autoref{loc_are_spheres} all local 
surfaces for 
$(C,\Sigma)$ are spheres. 

It remains to prove that 3 implies 1.
So assume that $C$ has an embedding into 
$\Sbb^3$ such that all local surfaces of the planar rotation system induced by the topological 
embedding are spheres. 
By treating different connected components separately, we may assume that $C$ is connected. By 
\autoref{topo_to_combi} all local surfaces 
of the topological embedding are spheres. 
Thus 3 implies 1 by \autoref{is_simply_connected2}.

\end{proof}

\begin{rem}
Our proof actually proves the strengthening of \autoref{nullt} with `embedded 
into $\Sbb^3$' replaced by  `embedded into a simply connected 3-dimensional compact 
manifold.' However this strengthening is equivalent to \autoref{nullt} by Perelman's theorem. 
\end{rem}

Recall that in order to prove \autoref{combi_intro_extended}, it suffices to show that every 
$p$-nullhomologous simplicial complex $C$ has a piece-wise linear embedding into 
$\Sbb^3$ if and only if it is simply connected and $C$ has a planar rotation system.

\begin{proof}[Proof of \autoref{combi_intro_extended}.]
Using an induction argument on the number of cut vertices as in the proof of \autoref{combi_intro}, 
we may assume that $C$ is locally connected. 
If $C$ has a piece-wise linear embedding into 
$\Sbb^3$, then it has a planar rotation system and it is simply connected by \autoref{nullt}.
The other direction follows from \autoref{combi_intro}.
\end{proof}

\begin{rem}\label{alg_topo}
One step in proving \autoref{combi_intro_extended} was showing that if a simplicial 
complex whose first homology group is trivial embeds in \Sthree, then it must be simply connected. 
In this section we have given a proof that only uses elementary topology. We use these methods 
again in \cite{3space4}. 

However there is a shorter proof of this fact, which we shall sketch in the following. 
Let $C$ be a simplicial complex embedded in \Sthree\ such that one local surface of the embedding 
is not a sphere. Our aim is to show that the first homology group of $C$ cannot be trivial. 

We will rely on the fact that the first homology group of $X=\Sbb^3\sm \Sbb^1$ is not trivial. 
It suffices to show that the homology group of $X$ is a quotient of the homology group of $C$. 
Since here by Hurewicz's theorem, the homology group is the abelisation of the fundamental 
group, it suffices to show that the fundamental group $\pi_1(X)$ of $X$ is a quotient of the 
fundamental group $\pi_1(C)$. 

We let $C_1$ be a small open neighbourhood of $C$ in the embedding of $C$ in \Sthree. Since $C_1$ 
has a deformation retract onto $C$, it has the same fundamental group. 
We obtain $C_2$ from $C_1$ by attaching the interiors of all local surfaces of the embedding 
except for one -- which is not a sphere. This can be done by attaching finitely many 3-balls. 
Similar as in the proof of \autoref{is simply connected}, one can use Van Kampen's theorem to show 
that the fundamental group of $C_2$ is a quotient of the fundamental group of $C_1$. By adding 
finitely many spheres if necessary and arguing as above one may assume that remaining local surface 
is a torus. Hence $C_2$ has the same fundamental group as $X$. This completes the sketch. 
\end{rem}

\section{Embedding general simplicial complexes}\label{beyond}

There are three classes of simplicial complexes that naturally include the simply connected 
simplicial complexes:
the $p$-nullhomologous ones that are included in those with abelian fundamental group that in turn 
are included in 
general simplicial complexes. \autoref{combi_intro_extended} characterises embeddability of 
$p$-nullhomologous complexes. In this section we prove embedding results for the later two 
classes. The 
bigger the class gets, the stronger 
assumptions we will require in order to guarantee topological embeddings into $\Sbb^3$.

A \emph{curve system} of a surface $S$ of genus $g$ is a choice of at most $g$ genus reducing 
curves in $S$ that are disjoint. 
An \emph{extension} of a rotation system $\Sigma$ is a choice of curve system at every local 
surface of $\Sigma$.
An extension of a rotation system of a complex $C$ is \emph{simply connected} if the topological 
space obtained from $C$ by 
gluing\footnote{We 
stress that the curves need not go through edges of $C$. `Gluing' here is on the level of 
topological spaces not of complexes.} a disc at 
each curve of the extension is simply connected. 
The definition of a \emph{$p$-nullhomologous extension} is the same with `$p$-nullhomologous' in 
place of 
`simply connected'. 

\begin{thm}\label{general}
 Let $C$ be a connected and locally connected simplicial complex with a rotation system 
$\Sigma$. 
 The following are equivalent.
 \begin{enumerate}
  \item $\Sigma$ is induced by a topological embedding of $C$ into $\Sbb^3$.
  \item $\Sigma$ is a planar rotation system that has a simply connected extension. 
  \item We can subdivide edges of $C$, do baricentric subdivision of faces and add new faces such 
that the resulting simplicial complex is 
simply connected and has a topological embedding into \Sthree\ whose induced planar rotation system 
$\Sigma'$ `induces' $\Sigma$.
 \end{enumerate}
\end{thm}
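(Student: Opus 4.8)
The plan is to prove Theorem~\ref{general} by establishing the cycle of implications $1\Rightarrow 2\Rightarrow 3\Rightarrow 1$, leaning on Perelman's theorem in the guise of Theorem~\ref{combi} and on the machinery of local surfaces developed in Sections~\ref{sec4}--\ref{sec6}. The implication $1\Rightarrow 2$ is the ``easy'' direction and should come first: if $\Sigma$ is induced by a topological embedding $\iota$ of $C$ into $\Sbb^3$, then $\Sigma$ is planar by Observation~\ref{obo} (local connectedness is assumed), and by Lemma~\ref{topo_to_combi} each local surface $S$ of $(C,\Sigma)$ equals a local surface of the embedding, i.e.\ the boundary surface of a component of $\Sbb^3\sm C$ (after pouring concrete). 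Such a surface bounds a handlebody-like region on one side inside $\Sbb^3$; choosing a maximal system of disjoint compression discs for that region and recording where their boundaries lie on $S$ gives a curve system, and gluing the corresponding discs back fills in exactly that complementary region, so the space obtained from $C$ by gluing a disc at every curve of the extension deformation retracts onto $\Sbb^3$ minus finitely many points, hence is simply connected. So the extension is simply connected.

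For $2\Rightarrow 3$ I would argue concretely with the definition of $T(C,\Sigma)$. Assume $\Sigma$ is planar with a simply connected extension, i.e.\ a curve system on each local surface $S$ whose discs, glued onto $C$, yield a simply connected space. First subdivide edges and do barycentric subdivision of faces of $C$ enough times so that each genus-reducing curve of the extension can be realised as a cycle in the (subdivided) $1$-skeleton lying inside the relevant local surface, and then add new faces whose boundaries are exactly these cycles; call the resulting complex $C'$. One checks that the rotation system $\Sigma'$ of $C'$ obtained by extending $\Sigma$ to the new edges/faces in the obvious way (new faces are inserted into the cyclic orders $\sigma(e)$ according to the side of $S$ they are cut from) ``induces'' $\Sigma$ in the sense of part~3, and that the local surfaces of $(C',\Sigma')$ are obtained from those of $(C,\Sigma)$ by compressing along the added curves --- hence are all spheres, so $\Sigma'$ is planar. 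By construction the space obtained from $C'$ by the new faces is homotopy equivalent (rel.\ the disc-gluing) to the simply connected space witnessing the extension, so $C'$ is simply connected. Then Theorem~\ref{combi} gives a piecewise-linear (hence topological) embedding of $C'$ into $\Sbb^3$ whose induced planar rotation system is $\Sigma'$, and $\Sigma'$ induces $\Sigma$; this is statement~3.

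For $3\Rightarrow 1$ the point is that the operations in~3 do not change the ambient topological type: given the embedding of the subdivided-and-augmented complex $C'$ into $\Sbb^3$, restrict it to the sub-complex corresponding to $C$ (undoing subdivisions is a homeomorphism on geometric realisations, and deleting the added faces only removes points), obtaining a topological embedding of $C$ into $\Sbb^3$; and since $\Sigma'$ induces $\Sigma$, the rotation system this embedding induces on $C$ is exactly $\Sigma$. This closes the cycle. The main obstacle is the $2\Rightarrow 3$ step, specifically the bookkeeping needed to realise the abstract genus-reducing curves of the extension as cycles in a subdivided $1$-skeleton inside the local surface and to verify that inserting faces along them both compresses every local surface to a sphere and produces a complex that is genuinely simply connected rather than merely $p$-nullhomologous --- here one must carefully track how Van~Kampen's theorem (as in Lemma~\ref{is simply connected}) interacts with the disc gluings of the extension. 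A secondary subtlety is pinning down the precise meaning of ``$\Sigma'$ induces $\Sigma$'' so that it is preserved under all three operations; I would make that a small standalone definition before the proof.
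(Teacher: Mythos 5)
Your overall architecture ($3\Rightarrow 1$ immediate, $1\Rightarrow 2$ via compression discs, $2\Rightarrow 3$ via subdividing so the curves become cycles, capping them with new faces, and invoking \autoref{combi}) is exactly the paper's. But there is a genuine gap at the heart of $2\Rightarrow 3$: you justify planarity of the augmented rotation system by saying that the local surfaces are compressed to spheres, ``so $\Sigma'$ is planar.'' That is a non sequitur under the paper's definitions. A planar rotation system is one whose \emph{link complexes at the vertices} are spheres; having all \emph{local surfaces} be spheres is a different condition and does not imply planarity (the implication that is proved in the paper runs the other way, and only under extra hypotheses, cf.\ \autoref{loc_are_spheres}). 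What actually has to be checked --- and what the paper's Sublemma~\ref{primeprime} is devoted to --- is that for each vertex $v$, the link complex of the augmented complex $C''$ is obtained from the link complex of $C'$ at $v$ by adding one edge per new face, each such edge traversing a face of the old link complex, with \emph{distinct} new edges traversing \emph{distinct} faces; this uses the correspondence of \autoref{loc_inc_AND_loc_surfaces} between vertices of local surfaces and faces of link complexes, together with the facts that each curve passes through a given vertex of a local surface at most once and that distinct curves of a curve system are disjoint. Only then does the link complex remain a sphere. You flag ``compressing every local surface to a sphere'' as the thing to verify, but that is the wrong invariant to track; without the link-complex argument the appeal to \autoref{combi} is unsupported.

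Two smaller points. In $1\Rightarrow 2$ the complementary region of a local surface in $\Sbb^3$ need not be ``handlebody-like''; the existence of a maximal disjoint system of compression discs on the inside is not free --- it is \autoref{cut_along_discs}, which rests on the Loop Theorem (\autoref{cut_along_disc}), and you should cite it rather than assert it. And in $2\Rightarrow 3$, note that \autoref{combi} needs $C''$ to be \emph{simplicial}, which is why the paper barycentrically subdivides the newly attached faces before defining the rotation system; your sketch attaches the capping faces directly to possibly long cycles, so you need that extra subdivision step (and the corresponding bookkeeping in the definition of ``induces'') to stay inside the hypotheses.
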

Here we define that `$\Sigma'$ induces $\Sigma$' in the obvious way as follows. 
Let $C$ be a simplicial complex obtained from a simplicial complex $C'$ by deleting faces. 
A rotation system $\Sigma=(\sigma(e)|e\in E(C))$ of $C$ is \emph{induced} by a rotation system 
$\Sigma'=(\sigma'(e)|e\in E(C))$ of 
$C'$ if $\sigma(e)$ is the restriction of $\sigma'(e)$ to the faces incident with $e$. 
If $C$ is obtained from contracting edges of $C'$ instead, a rotation system $\Sigma$ of $C$ is 
\emph{induced} by a rotation system 
$\Sigma'$ of 
$C'$ if $\Sigma$ is the restriction of $\Sigma'$ to those edges that are in $C$. 
If $C'$ is obtained from $C$ by a baricentric subdivision of a face $f$ we take the same definition 
of `induced', where we make the 
identification between the face $f$ of $C$ and all faces of $C'$ obtained by subdividing $f$.
Now in the situation of \autoref{general}, we say that $\Sigma'$ \emph{induces} $\Sigma$ if there 
is a chain of planar rotation systems 
each inducing the next one starting with $\Sigma'$ and ending with $\Sigma$.  

Before we can prove \autoref{general}, we need some preparation. 
The following is a consequence of the Loop Theorem \cite{{Pap57},{Hatcher3notes}}.

\begin{lem}\label{cut_along_disc}
 Let $X$ be an orientable surface of genus $g\geq 1$ embedded topologically into $\Rbb^3$, then 
there is a genus reducing 
circle\footnote{A \emph{circle} is a topological space homeomorphic to $\mathbb{S}^1$. } $\gamma$ 
 of $X$ and a disc $D$ with boundary $\gamma$ and all interior points of $D$ are contained in the 
interior of $X$. 
\end{lem}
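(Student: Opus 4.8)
The statement to prove is \autoref{cut_along_disc}: an embedded orientable surface $X$ of genus $g\geq 1$ in $\Rbb^3$ bounds, on its interior side, a disc along a genus-reducing circle. The plan is to apply the Loop Theorem (or its Dehn's-Lemma strengthening) to the closure of the interior region of $X$. First I would set up the relevant 3-manifold: let $N$ be the closure in $\Sbb^3$ (or $\Rbb^3$) of the component of the complement of $X$ that we think of as the ``inside''; this is a compact $3$-manifold with $\partial N = X$. Since $g \geq 1$, the surface $X$ is not a sphere, so the inclusion-induced map $\pi_1(X) \to \pi_1(N)$ has nontrivial kernel: this is where I would invoke the standard fact that if $\partial N$ is incompressible in $N$ then $\partial N$ consists of spheres when $N$ embeds in $\Sbb^3$ — equivalently, a handlebody-type argument, or more directly the observation that $H_1$ considerations force a compressing curve. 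The cleanest route is: $X$ separates $\Sbb^3$, and by Alexander duality / a half-lives-half-dies argument, the kernel of $H_1(X;\Qbb) \to H_1(N;\Qbb)$ has dimension $g \geq 1$, hence there is an essential simple closed curve on $X$ that is null-homologous, a fortiori the map on $\pi_1$ has nontrivial kernel.

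Next I would apply the Loop Theorem: given that $\ker(\pi_1(X) \to \pi_1(N))$ is nontrivial, there is a properly embedded disc $D$ in $N$ with $\partial D = \gamma \subset X$ an essential simple closed curve on $X$. All interior points of $D$ then lie in the interior of $N$, i.e.\ in the ``inside'' of $X$, which is exactly what the lemma demands. The remaining point is to check that $\gamma$ can be taken to be \emph{genus-reducing}, meaning that cutting $X$ along $\gamma$ and capping with two copies of $D$ lowers the genus. This holds because $\gamma$ is essential and non-separating-or-separating: if $\gamma$ is non-separating on $X$, surgery drops the genus by one; if $\gamma$ is separating, one of the two resulting pieces has positive genus and the other does too unless $\gamma$ bounds a disc on $X$ — which it does not, being essential. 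In the separating case one should be slightly careful: surgery along a separating essential curve splits $X$ into two surfaces of genus $g_1, g_2$ with $g_1 + g_2 = g$ and $g_1, g_2 \geq 1$, so the ``genus'' in the sense of the maximal handle count still reduces; since the lemma only needs \emph{a} genus-reducing curve, I would note that an innermost/standard argument lets us choose $\gamma$ non-separating when $g \geq 1$ by picking the compressing curve to be homologically essential (the kernel of $H_1(X) \to H_1(N)$ is a Lagrangian-type subspace of rank $g$, and it contains primitive classes, which are represented by non-separating simple closed curves).

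\textbf{Main obstacle.} The genuine content is entirely in the Loop Theorem (equivalently Papakyriakopoulos's work), which is quoted as a black box, so the expected obstacle is not a deep one but a bookkeeping one: ensuring the compressing curve produced is both \emph{essential on $X$} (automatic from the Loop Theorem, which gives $\partial D$ essential) and \emph{genus-reducing} rather than merely compressing. I would handle this by first locating a non-separating essential simple closed curve in $\ker(\pi_1(X)\to\pi_1(N))$ — using that this kernel, after abelianising, is a rank-$g$ direct summand of $H_1(X;\Zbb)$ and hence contains a primitive (and therefore non-separating) class — and then feeding that homotopy class, or rather the subgroup it generates, into the Loop Theorem to extract an embedded disc whose boundary is a non-separating essential curve; surgery along such a curve visibly drops the genus by exactly one. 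A secondary minor point is passing between $\Rbb^3$ and $\Sbb^3$ and choosing the correct complementary component as the ``interior''; this is routine since $X$ compact implies it separates $\Sbb^3$ into exactly two components, one of which is bounded in $\Rbb^3$, and that bounded one is the interior in the sense of the lemma.
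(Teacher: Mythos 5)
Your overall route---compress the surface via the Loop Theorem---is the intended one (the paper offers no proof of \autoref{cut_along_disc} beyond citing the Loop Theorem), but the step in which you establish the hypothesis of the Loop Theorem is wrong. You argue that since $\ker\bigl(H_1(X;\Qbb)\to H_1(N;\Qbb)\bigr)$ has dimension $g\geq 1$ (half-lives-half-dies), some essential simple closed curve on $X$ is null-homologous in $N$ and ``a fortiori'' $\pi_1(X)\to\pi_1(N)$ has nontrivial kernel. Null-homologous does not imply null-homotopic: the longitude of a nontrivial knot is null-homologous in the knot exterior $E$, yet $\pi_1(\partial E)\to\pi_1(E)$ is injective ($\partial E$ is incompressible in $E$). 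The same example refutes the ``standard fact'' you invoke: a compact $N\subseteq\Sbb^3$ with incompressible boundary need not have sphere boundary. What is true is that a closed surface of positive genus in $\Sbb^3$ is compressible on \emph{at least one} side, and the correct argument is via van Kampen: writing $\Sbb^3=N_1\cup_X N_2$, if both maps $\pi_1(X)\to\pi_1(N_i)$ were injective then $\pi_1(X)$ would embed into the amalgamated product $\pi_1(N_1)\ast_{\pi_1(X)}\pi_1(N_2)=\pi_1(\Sbb^3)=1$, contradicting $g\geq 1$; the Loop Theorem then yields an embedded compressing disc on whichever side fails to be $\pi_1$-injective.

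This matters for the conclusion, because you fixed $N$ to be the bounded (``inside'') component in advance, and that component can be exactly the incompressible side: embed the exterior $E$ of a nontrivial knot in $\Rbb^3=\Sbb^3\sm\{p\}$ with $p$ chosen inside the knotted solid torus, so that the bounded component of $\Rbb^3\sm\partial E$ is the interior of $E$; this component contains no compressing disc at all. So the disc produced by the Loop Theorem can only be guaranteed to have its interior disjoint from $X$, i.e.\ to lie in one of the two complementary components, not in a prescribed one; your proof should not pretend to choose the side, and the lemma is only correct under this weaker reading of ``interior of $X$''. Two smaller points: the Loop Theorem by itself does not let you prescribe the homology class of $\partial D$, so to force $\partial D$ non-separating you need either the normal-subgroup refinement of the Loop Theorem or an induction that further compresses a positive-genus piece whenever $\partial D$ separates; and a separating essential compression leaves the total genus $g_1+g_2=g$ unchanged, so this case genuinely has to be eliminated rather than accommodated.
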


\begin{cor}\label{cut_along_discs}
  Let $X$ be an orientable surface of genus $g\geq 1$ embedded topologically into $\Rbb^3$, then 
there are genus reducing circles 
$\gamma_1$,..., $\gamma_g$ of $X$ and closed discs $D_i$ with boundary $\gamma_i$ 
such that the $D_i$  are disjoint and the interior points of the discs $D_i$ are contained in the 
interior of $X$.
\end{cor}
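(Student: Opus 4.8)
The plan is to bootstrap from \autoref{cut_along_disc}, which already produces a single genus-reducing disc, to a full family of $g$ disjoint discs by induction on $g$. First I would apply \autoref{cut_along_disc} to obtain one genus-reducing circle $\gamma_1$ together with a closed disc $D_1$ whose boundary is $\gamma_1$ and whose interior lies in the interior of $X$. The key move is then to cut $X$ along $\gamma_1$: since $\gamma_1$ is genus reducing, the surface $X'$ obtained from $X$ by cutting along $\gamma_1$ and capping the two resulting boundary circles with parallel copies of $D_1$ (pushed slightly to either side) is an orientable surface of genus $g-1$. Moreover $X'$ can be realised as a topologically embedded surface in $\Rbb^3$ in a small neighbourhood of $X \cup D_1$, and one can arrange that the embedding of $X'$ agrees with that of $X$ outside a small neighbourhood of $D_1$.

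Next I would invoke the induction hypothesis on $X'$ (which has genus $g-1 \geq 0$; the base case $g=0$ is vacuous since there is nothing to do). This yields genus-reducing circles $\gamma_2, \dots, \gamma_g$ of $X'$ and pairwise disjoint closed discs $D_2, \dots, D_g$ with $\partial D_i = \gamma_i$ and interiors inside the interior of $X'$. The main point to check is that these data transport back to $X$: because the embedding of $X'$ coincides with that of $X$ away from a small neighbourhood $N$ of $D_1$, and because $\gamma_1$ together with $D_1$ can be surrounded by such an $N$ disjoint from the $\gamma_i$ and $D_i$ for $i \geq 2$ (shrinking $N$ if necessary, using compactness of the $D_i$), each $\gamma_i$ for $i \geq 2$ is also a circle on $X$, it is still genus reducing on $X$, and $D_i$ still has its interior in the interior of $X$. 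Finally the $D_i$ for $i \geq 1$ are pairwise disjoint: $D_1$ is disjoint from the rest because the latter avoid the neighbourhood $N \supseteq D_1$, and $D_2, \dots, D_g$ are disjoint by the inductive conclusion.

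The step I expect to be the main obstacle is making precise the ``cut and cap'' operation and verifying that $X'$ embeds in $\Rbb^3$ compatibly with $X$ — in particular that capping with two pushed-off parallel copies of $D_1$ can be done without introducing intersections with the part of $X$ that survives, and that the genus genuinely drops by exactly one. This is where one uses that all interior points of $D_1$ lie in the interior of $X$ (so a small bicollar of $D_1$ meets $X$ only near $\gamma_1$) and that $\gamma_1$ is two-sided in the orientable surface $X$, so the two push-offs lie on opposite sides and the capped surface is again orientable. Once this local surgery picture is set up carefully, the inductive bookkeeping of disjointness is routine, and one should also remark that ``genus reducing for $X'$'' implies ``genus reducing for $X$'' since capping off $\gamma_1$ only lowers genus, so a curve whose complement-in-$X'$ has smaller genus also has this property in $X$.
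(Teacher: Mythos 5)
Your proof is essentially the paper's own argument: the published proof is a two-line induction on $g$ --- cut the surface along the disc produced by \autoref{cut_along_disc} and apply that lemma again to the resulting genus-$(g-1)$ surface. Your write-up simply fills in the cut-and-cap surgery and the disjointness bookkeeping that the paper leaves implicit, so it matches the intended proof.
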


\begin{proof}
 We prove this by induction on $g$. In the induction step we cut of the current surface along $D$. 
Then we the apply 
\autoref{cut_along_disc} to that new surface. 
\end{proof}

\begin{proof}[Proof of \autoref{general}.]
1 is immediately implied by 3. 

Next assume that $\Sigma$ is induced by a topological embedding of $C$ into $\Sbb^3$. Then $\Sigma$ 
is clearly a planar rotation system. 
It has a simply connected extension by \autoref{cut_along_discs}. Hence 1 implies 2. 

Next assume that $\Sigma$ is a planar rotation system that has a simply connected extension. We 
can clearly subdivide edges and do 
baricentric subdivision and change the curves of the curve system of the simply connected extension 
such that in the resulting simplicial 
complex $C'$ all the curves of the simply connected extension closed are walks in the 1-skeleton of 
$C'$.
We define a planar rotation system $\Sigma'$ of $C'$ that induces $\Sigma$ as follows.
If we subdivide an edge, we assign to both copies the cyclic orientation of the original edge. If 
we do a baricentric subdivision, we 
assign to all new edges the unique cyclic orientation of size two. 
Iterating this during the construction of $C'$ defines $\Sigma'=(\sigma'(e)|e\in E(C))$, which 
clearly is a planar rotation system that 
induces $\Sigma$.
By construction $\Sigma'$ has a simply connected extension such that all its curves are walks in 
the 1-skeleton of $C'$ .

Informally, we obtain $C''$ from $C'$ by attaching a disc at the boundary of each curve of the 
simply connected extension. 
Formally, we obtain $C''$ from $C'$ by first adding a face for each curve $\gamma$ in the simply 
connected extension whose boundary is 
the closed walk $\gamma$. Then we do a baricentric subdivision to all these newly added faces. This 
ensures that 
$C''$ is a simplicial complex. 
Since $C$ is locally connected, also $C''$ is locally connected. 
Since the geometric realisation of $C''$ is equal to the geometric realisation of $C$, which is 
simply connected, the simplicial complex $C''$ 
is simply connected. 

Each newly added face $f$ corresponds to a traversal of a curve $\gamma$ of some edge $e$ of $C'$. 
This 
traversal is a unique edge of the local surface $S$ to whose curve system $\gamma$ belongs. For 
later reference we denote that copy of $e$ 
in $S$ by $e_f$.

We define a rotation system $\Sigma''=(\sigma''(e)|e\in E(C))$ of $C''$ as follows. 
All edges of $C''$ that are not edges of $C'$ are incident with precisely two faces. We take the 
unique cyclic ordering of size two there. 

Next we define $\sigma''(e)$ at edges $e$ of $C'$ that are incident with newly added faces. 
If $e$ is only incident with a single face of $C'$, 
then $e$ is only in a single 
local surface and it only has one copy in that local surface. Since the curves at that local 
surface are disjoint. We could have only added 
a single face incident with $e$. We take for $\sigma''(e)$ the unique 
cyclic orientation of size two at $e$. 

So from now assume that $e$ is incident with at least two faces of $C'$. 
In order to define $\sigma''(e)$, we start with $\sigma'(e)$ and define in the following for each 
newly added face in 
between which two cyclic orientations of faces adjacent in $\sigma'(e)$ we put it. We shall ensure 
that between any two orientations we put 
at most one new face. Recall that two cyclic orientations $\vec{f_1}$ and $\vec{f_2}$ of faces 
$f_1$ and $f_2$, respectively, are adjacent 
in $\sigma'(e)$ if and only if there is a clone $e'$ of $e$ in a local surface $S$ for 
$(C',\Sigma')$ containing $\vec{f_1}$ and 
$\vec{f_2}$ such that $e'$ is incident with $\vec{f_1}$ and $\vec{f_2}$ in $S$.
Let $f$ be a face newly added to $C''$ at $e$.  Let $\gamma_f$ be the curve from which $f$ is build 
and let $S_f$ be the local surface 
that has $\gamma_f$ in its curve system. Let $e_f$ be the copy of $e$ in $S_f$ that corresponds to 
$f$ as defined above.  when we 
consider $f$ has a face obtained from the disc glued at  $\gamma_f$.
We add $f$ to $\sigma'(e)$ in between the two cyclic orientations that are incident with $e_f$ in 
$S_f$. 
This completes the definition of $\Sigma''$.
Since the copies $e_f$ are distinct for different faces $f$, the rotation system $\Sigma''$ is 
well-defined.
By construction $\Sigma''$ induces $\Sigma$. We prove the following.

\begin{sublem}\label{primeprime}
 $\Sigma''$ is a planar rotation system of $C''$. 
\end{sublem}

\begin{proof}
Let $v$ be a vertex of $C''$. If $v$ is not a vertex of $C'$, then the link graph at $v$ 
is a cycle. Hence the link complex at $v$ is clearly a sphere. Hence we may assume that $v$ is a 
vertex of $C'$. 

Our strategy to show that the link complex $S''$ at $v$ for $(C'',\Sigma'')$ is a sphere 
will be to show that it is obtained 
from the link complex $S'$ for $(C',\Sigma')$ by adding edges in such a way that each 
newly added edge traverses a face of $S'$ 
and two newly added edges traverse different face of $S'$. 

So let $f$ be a newly added face incident with $v$ of $C'$. 
Let $x$ and $y$ be the two edges of $f$ incident with $v$. 
We make use of the notations $\gamma_f$, $S_f$, $x_f$ and $y_f$ defined above. 
Let $v_f$ be the unique vertex of $S_f$ traversed by $\gamma_f$ in between $x_f$ and $y_f$.
By \autoref{loc_inc_AND_loc_surfaces} there is a unique face $z_f$ of $S'$ mapped by the map 
$\iota$ of that lemma to 
$v_f$. 
And $x$ and $y$ are vertices in the boundary of $z_f$. The edges on the boundary of $z_f$ incident 
with $x$ and $y$ are the cyclic 
orientations of the faces that are incident with $x_f$ and $y_f$ in $S_f$. Hence in $S''$ the edge 
$f$ traverses the face $z_f$.

It remains to show that the faces $z_f$ of $S'$ are distinct for different newly added faces $f$ of 
$C''$. For that it suffices by 
\autoref{loc_inc_AND_loc_surfaces} to show that the vertices $v_f$ are distinct. This is true as 
curves for $S_f$ traverse a vertex of $S_f$ 
at most once and different curves for $S_f$ are disjoint. 

\end{proof}

 Since $\Sigma''$ is a planar rotation system of the locally connected simplicial complex $C''$ and 
$C''$ is simply connected, $\Sigma''$ is 
induced by a topological embedding of 
$C''$ into \Sthree\ by \autoref{combi}. Hence 2 implies 3.
\end{proof}

A natural weakening of the property that $C$ is simply connected is that the fundamental group of 
$C$ is abelian. 
Note that this is equivalent to the condition that every chain that is $p$-nullhomologous is 
simply 
connected.

\begin{thm}\label{abelian_alt}
 Let $C$ be a connected and locally connected simplicial complex with abelian fundamental group. 
Then $C$ has a topological embedding 
into $\Sbb^3$ if 
and only if it has a planar rotation system $\Sigma$ that has a $p$-nullhomologous extension. 
\end{thm}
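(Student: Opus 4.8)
\textbf{Proof proposal for \autoref{abelian_alt}.}
The plan is to mimic the strategy behind \autoref{general}, replacing the simply connected extension by a $p$-nullhomologous one and then invoking the already-established equivalence between $p$-nullhomology and simple connectivity for complexes embedded in $\Sbb^3$. For the ``only if'' direction, suppose $C$ embeds topologically in $\Sbb^3$. By \autoref{prs_orient} the induced rotation system $\Sigma$ is planar, and by \autoref{cut_along_discs} the system of genus-reducing discs obtained from the embedded local surfaces yields a curve system at each local surface; the union of $C$ with discs glued at these curves is (homeomorphic to) a neighbourhood retract of the filled-in solid region, which embeds in $\Sbb^3$ and hence in particular is $p$-nullhomologous. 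Thus $\Sigma$ has a $p$-nullhomologous extension.

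For the ``if'' direction, assume $\Sigma$ is a planar rotation system with a $p$-nullhomologous extension. As in the proof of \autoref{general}, first subdivide edges and do baricentric subdivisions of faces so that all curves of the extension become closed walks in the $1$-skeleton of the resulting complex $C'$; then build $C''$ by adding a face along each such curve and baricentrically subdividing, obtaining a locally connected simplicial complex, together with a planar rotation system $\Sigma''$ defined exactly as in the proof of \autoref{general} (inserting each newly added face at $e$ between the two orientations incident with the corresponding clone $e_f$ in the local surface $S_f$). \autoref{primeprime} applies verbatim to show $\Sigma''$ is a planar rotation system of $C''$. Now $C''$ is obtained from $C$ by gluing discs at the curves of a $p$-nullhomologous extension, so $C''$ is $p$-nullhomologous; moreover $C''$ is locally connected and carries the planar rotation system $\Sigma''$. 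Hence \autoref{loc_are_spheres} shows all local surfaces for $(C'',\Sigma'')$ are spheres, so the associated space $T(C'',\Sigma'')$ is a $3$-manifold by \autoref{combi}, and it is a closed simply connected $3$-manifold --- here we use that the fundamental group of $C''$ is a quotient of that of $C$, which is abelian, together with $H_1(C'',\Fbb_p)=0$, and apply the standard fact that an abelian group with trivial $\Fbb_p$-homology of its abelianisation forces triviality, so $\pi_1(C'')=1$; then \autoref{combi} gives that $T(C'',\Sigma'')$ is the $3$-sphere and $C''$ embeds in $\Sbb^3$, whence so does $C$.

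The main obstacle I anticipate is the step deducing that $C''$ is actually simply connected (not merely $p$-nullhomologous): one must argue that since $\pi_1(C)$ is abelian and gluing discs only adds relations, $\pi_1(C'')$ is a quotient of $\pi_1(C)$ and hence abelian, so that $\pi_1(C'')\cong H_1(C'';\Zbb)$; then $p$-nullhomology of $C''$ together with the finite generation of $H_1$ does \emph{not} by itself give vanishing (there could be torsion coprime to $p$), so one should instead note that the hypothesis ``$p$-nullhomologous extension'' combined with abelian $\pi_1(C)$ is designed precisely so that the remark after \autoref{nullt} --- that for abelian fundamental group, $p$-nullhomologous is equivalent to simply connected --- applies to $C''$. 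Making this last implication precise, and checking that the subdivisions and disc attachments in the construction of $C''$ do not disturb it, is where the real care is needed; the rest is a faithful rerun of the proof of \autoref{general} and an application of \autoref{combi}.
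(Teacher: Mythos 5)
Your overall strategy coincides with the paper's: reduce to \autoref{general} by showing that, when $\pi_1(C)$ is abelian, a $p$-nullhomologous extension actually forces simple connectivity of the glued-up space. But you leave exactly that step unproven. You correctly observe that $\pi_1(C'')$ is an abelian quotient of $\pi_1(C)$, hence equals $H_1(C'';\Zbb)$, and that $H_1(C'';\Fbb_p)=0$ alone does not kill an abelian group (torsion of order coprime to $p$ survives); you then defer to ``the remark after \autoref{nullt}''. That remark does not say what you need --- \autoref{nullt} concerns complexes \emph{already embedded} in $\Sbb^3$, which is precisely what you are trying to establish, so it cannot be invoked; the statement you actually want is the unproved sentence preceding \autoref{abelian_alt}. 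Since your route through \autoref{loc_are_spheres} and the second half of \autoref{combi} genuinely requires $\pi_1(C'')=1$ (that is where Perelman enters), the argument is incomplete at its central point. The paper isolates this step as \autoref{abi}: the space obtained by gluing discs along the curves of the extension has fundamental group a quotient of the abelian group $\pi_1(C)$, hence abelian, hence equal to its abelianisation, which the $p$-nullhomology hypothesis is taken to annihilate. (Your worry about coprime torsion is legitimate and touches that argument too, but at minimum such a lemma must be stated and used rather than listed as an anticipated obstacle.) Note also that once you have \autoref{abi} there is no need to re-run the construction of $C''$: the extension is simply connected and \autoref{general} finishes the proof as a black box.

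The ``only if'' direction is also misjustified: a complex that embeds in $\Sbb^3$ need not be $p$-nullhomologous (a torus embeds in $\Sbb^3$ and has nontrivial first homology), so ``embeds in $\Sbb^3$ and hence is $p$-nullhomologous'' is a non sequitur. The correct one-line argument is the paper's: \autoref{general} already yields a \emph{simply connected} extension of the induced planar rotation system, and a simply connected extension is a fortiori $p$-nullhomologous.
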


In order to prove \autoref{abelian_alt}, we prove the following.

\begin{lem}\label{abi}
 A $p$-nullhomologous extension of a planar rotation system of a simplicial complex $C$ with 
abelian 
fundamental group is a simply connected 
extension. 
\end{lem}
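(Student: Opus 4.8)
\textbf{Proof proposal for \autoref{abi}.}

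The plan is to show that the topological space $C^+$ obtained from $C$ by gluing a disc along each curve of the given $p$-nullhomologous extension is simply connected; by definition this is exactly what it means for the extension to be simply connected. We already know by hypothesis that $C^+$ is $p$-nullhomologous, i.e. $H_1(C^+,\Fbb_p)=0$. The key observation to exploit is that $\pi_1(C^+)$ is a quotient of $\pi_1(C)$: indeed, $C^+$ is obtained from $C$ by attaching $2$-cells (the discs, up to homotopy the attaching is along loops in $C$), and attaching $2$-cells can only add relations to the fundamental group, so the inclusion $C\hookrightarrow C^+$ induces a surjection $\pi_1(C)\onto\pi_1(C^+)$. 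Since $\pi_1(C)$ is abelian by assumption, its quotient $\pi_1(C^+)$ is abelian as well.

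Now I would invoke Hurewicz's theorem: for a space with abelian fundamental group, $H_1$ is the abelianisation of $\pi_1$, hence here $H_1(C^+,\Zbb)\cong\pi_1(C^+)$. Combining this with $H_1(C^+,\Fbb_p)=0$: by the universal coefficient theorem, $H_1(C^+,\Fbb_p)= H_1(C^+,\Zbb)\otimes\Fbb_p \,\oplus\, \mathrm{Tor}(H_0(C^+,\Zbb),\Fbb_p)$, and the Tor term vanishes since $H_0$ is free; so $H_1(C^+,\Zbb)\otimes\Fbb_p=0$. A finitely generated abelian group $A$ with $A\otimes\Fbb_p=0$ has no free part and no $p$-torsion; but I want to conclude $A=0$ outright. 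Here is where I use more than just $p$-nullhomology of $C^+$ alone: the hypothesis is really that $C$ has \emph{abelian fundamental group}, and the stated remark just before \autoref{abelian_alt} records that for such $C$, ``every chain that is $p$-nullhomologous is simply connected''. Concretely, $\pi_1(C^+)$ is a quotient of the abelian group $\pi_1(C)$, so it is abelian, and $p$-nullhomology of $C^+$ forces $H_1(C^+,\Fbb_p)=0$; one then argues that an abelian fundamental group which is $p$-nullhomologous must in fact be trivial. The cleanest route is: $\pi_1(C^+)\cong H_1(C^+,\Zbb)$ is finitely generated abelian; writing it as $\Zbb^r\oplus T$ with $T$ finite, $H_1(C^+,\Fbb_p)=0$ kills the $\Zbb^r$ summand ($r=0$) and the $p$-primary part of $T$; to kill the rest of $T$ one repeats the argument with the other primes — but actually the hypothesis only gives us $p$-nullhomology for the one prime $p$. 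So instead I would argue directly from the definition of $p$-nullhomologous: every directed cycle of $C^+$ is generated over $\Fbb_p$ by face boundaries, which combined with $\pi_1$ abelian and Hurewicz shows the integral $H_1$ is generated by boundaries after tensoring with $\Fbb_p$, and a short argument (lifting a $\Zbb$-basis, Nakayama-type) upgrades this to the integral statement, giving $H_1(C^+,\Zbb)=0$, hence $\pi_1(C^+)=0$.

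The main obstacle I anticipate is precisely this last algebraic step: passing from the vanishing of $H_1$ with $\Fbb_p$ coefficients to the vanishing of $\pi_1$, using only the single prime $p$ together with the abelianness of $\pi_1(C)$. The resolution should be that the condition ``$\pi_1(C)$ abelian'' is strong: it is equivalent (as the paper notes) to ``$p$-nullhomologous $\Rightarrow$ simply connected'' for chains, and one can apply this equivalence to a suitable chain extracted from $C^+$ — or more self-containedly, observe that since $\pi_1(C^+)$ is a quotient of $\pi_1(C)$ and $C^+$ and $C$ have closely related homology, the abelian group $\pi_1(C^+)$ must be a quotient of a group all of whose $p$-nullhomologous quotients are trivial. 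Once $\pi_1(C^+)=0$ is established, $C^+$ is simply connected and hence the extension is a simply connected extension, completing the proof.
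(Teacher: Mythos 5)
Your overall route is the same as the paper's: form the space $C^{+}$ by gluing discs along the curves of the extension, observe that $\pi_1(C^{+})$ is a quotient of $\pi_1(C)$ because attaching $2$-cells only imposes relations (the paper cites Hatcher, Proposition 1.26, for exactly this), conclude that $\pi_1(C^{+})$ is abelian and hence isomorphic to its abelianisation $H_1(C^{+},\Zbb)$, and then try to finish using the $p$-nullhomology of the extension. Up to and including the Hurewicz/universal-coefficient bookkeeping, this matches the paper's argument and is fine.

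The proposal does not close at the final step, and you say so yourself. From $H_1(C^{+},\Fbb_p)=0$ you only get $A\otimes\Fbb_p=0$ for the finitely generated abelian group $A=H_1(C^{+},\Zbb)$, which kills the free rank and the $p$-primary torsion but leaves torsion of order coprime to $p$ untouched; the fixes you gesture at do not work. A ``Nakayama-type'' lifting argument cannot rule out, say, $A\cong\Zbb/3\Zbb$ with $p=2$, and the remark preceding \autoref{abelian_alt} that you invoke is essentially a restatement of the lemma rather than an independent fact, so appealing to it is circular. As written, then, the step ``$A/pA=0\Rightarrow A=0$'' is a genuine gap. It is worth noting that the paper's own proof dispatches this point in a single clause (the abelianisation ``is trivial by assumption''), so you have put your finger on precisely the place where the argument needs either an integral reading of ``nullhomologous'' or some control of the torsion of $\pi_1(C)$ relative to $p$; but identifying the difficulty is not resolving it, and the proposal does not establish the statement.
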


\begin{proof}
 Let $C'$ be the topological space obtained from $C$ by gluing discs along the curves of the 
$p$-nullhomologous extension. The fundamental 
group $\pi'$ of $C'$ is a quotient of the fundamental group $\pi$ of $C$, see for example 
{\cite[Proposition 1.26]{Hatcher}}.
Since $\pi$ is abelian by assumption, also $\pi'$ is abelian. That is, it is equal to its 
abelisation, which is trivial by assumption.
Hence $C'$ is simply connected.
\end{proof}

\begin{proof}[Proof of \autoref{abelian_alt}.]
 If $C$ has a topological embedding into $\Sbb^3$, then by \autoref{general} it has a planar 
rotation system that has a $p$-nullhomologous 
extension.
 If $C$ has a planar rotation system that has a $p$-nullhomologous 
extension, then that extension is simply connected by \autoref{abi}. Hence $C$ has a topological 
embedding into $\Sbb^3$ by the other 
implication of \autoref{general}. 
\end{proof}

\section{Non-orientable 3-manifolds}\label{non-or}

By \autoref{prs_orient} and \autoref{is_manifold} a 2-complex $C$ is embeddable in an 
orientable 3-manifold if and only if it has a planar rotation system. 
Here we discuss a notion similar to planar rotation systems that can be used to characterise 
embeddability in 3-manifolds combinatorially.

A \emph{generalised planar rotation system} of a 2-complex $C$ is a choice of embedding of all its 
link graphs in the plane so that for each edge $e$ of $C$ in the two link graphs at the endvertices 
of the edge $e$, the rotators at the vertex $e$ are reverse or agree. We colour an edge \emph{red} 
if they agree. We have the further condition that for every face $f$ of $C$ its number of red 
incident edges is even. 

\begin{rem}
Any planar rotation system defines a generalised planar rotation system. In this induced 
generalised planar rotation system no edge is red. 
\end{rem}

\begin{rem}
Let $C$ be simplicial complex whose first homology group over the binary field $\Fbb_2$ is trivial, 
for example this includes the case where $C$ is simply connected. Then $C$ has a planar rotation 
system if and only if it has a generalised planar rotation system by \autoref{nullt} and 
{\cite[\autoref*{pre-rot}]{3space1}}.
\end{rem}

\begin{rem}\label{trivial}
A locally 3-connected 2-complex has a generalised planar rotation system if and only if every choice 
of embeddings of its link graphs in the plane is a generalised planar rotation system. Indeed, any 
face shares an even number of edges with every vertex. By local 3-connectedness, the embeddings of a 
link graph at some vertex $v$ is unique up to orientation, which reverses all rotators at edges 
incident with $v$. So it flips red edges to non-red ones and vice versa. Hence the number of red 
edges modulo two does not change. 

Thus there is a trivial algorithm that verifies whether a locally 3-connected 2-complex has a 
generalised planar rotation system: embed the link graphs in the plane arbitrarily (if possible) and 
then check the condition at every edge and face. 
\end{rem}

The following was proved by Skopenkov.

\begin{thm}[\cite{Skopenkov94}]
The following are equivalent for a 2-complex $C$.
\begin{itemize}
\item $C$ has a generalised planar rotation system;
\item $C$ has a thickening that is a 3-manifold;
\item $C$ has a piece-wise linear embedding in some 3-manifold.
\end{itemize}
\end{thm}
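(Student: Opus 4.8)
The plan is to prove the three-way equivalence by a cycle of implications, leaning heavily on the machinery already developed for (ordinary) planar rotation systems and the constructions $T(C,\Sigma)$ and $C''$ from Sections \ref{sec4}--\ref{beyond}. Concretely I would show: a generalised planar rotation system $\Rightarrow$ a thickening that is a 3-manifold $\Rightarrow$ a piece-wise linear embedding in some 3-manifold $\Rightarrow$ a generalised planar rotation system. The last implication is the easiest: given an embedding of $C$ into a (possibly non-orientable) 3-manifold $M$, take a regular neighbourhood; walking around a midpoint of each edge $e$ in a small circle gives a cyclic orientation of the incident faces, and walking around each vertex $v$ on a small $2$-sphere gives an embedding of the link graph $L(v)$ into that sphere, hence a planar embedding. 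The only subtlety is that orientations need not match up globally: if we choose local orientations of the edge- and vertex-neighbourhoods along a spanning tree (as in the proof of \autoref{prs_klein}), then the non-tree edges where orientations clash are exactly the \emph{red} edges, and the evenness condition at each face $f$ is forced because the boundary walk of $f$ must return to a consistent orientation after traversing $f$ — the orientation is flipped precisely once per red incident edge, so the number of red incident edges is even.

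The implication ``thickening is a $3$-manifold $\Rightarrow$ PL-embedding in a $3$-manifold'' is essentially a definition-unwinding: a thickening of $C$ is a $3$-manifold $N$ containing $C$ as a deformation retract, so $C$ embeds in $N$. The heart of the theorem is the first implication. Here I would first observe that the red edges describe a ``twisting'' pattern, and that one can reduce to the orientable case by passing to a suitable double cover or, more combinatorially, by a cut-and-paste: subdivide the complex so that the red-edge structure is tame, and build the thickening locally. Around each vertex $v$ the planar embedding of $L(v)$ in the plane gives a ``cone neighbourhood'' that is a $3$-ball (the cone over a $2$-sphere, after completing the planar embedding of $L(v)$ into an $\Sbb^2$ — this uses that $L(v)$ is connected for locally connected $C$, though for the general statement one takes a disjoint union of balls). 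Around each edge $e$, $\sigma(e)$ (the rotator read off from either link graph, the two agreeing up to the red/non-red flip) determines how finitely many ``pieces of cake'' glue cyclically around $e$ into a solid neighbourhood. The red edges are exactly the ones where the two cake-slabs meeting across a face must be glued with a flip, and the face-evenness condition is precisely what guarantees that these flips are globally consistent so that the union of all these local pieces is a manifold rather than something with a bad point. This is the generalisation of \autoref{is_manifold} to the non-orientable setting, and verifying the manifold condition at edges and vertices in the presence of red edges is the main obstacle.

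An alternative, cleaner route for the hard implication — and the one I would actually write up — is to reduce to \autoref{combi} via the extension/adding-faces trick of \autoref{general}. From a generalised planar rotation system one can, after subdividing, attach a collection of discs along curves in the local surfaces to kill the non-sphere and non-orientable behaviour of the local surfaces, obtaining a locally connected simplicial complex $C''$ with an honest planar rotation system $\Sigma''$; but since we no longer require simple connectivity, we cannot invoke Perelman and get $\Sbb^3$ — instead \autoref{is_manifold} (via \autoref{combi}, first part) gives that $T(C'',\Sigma'')$ is an oriented $3$-manifold into which $C''$, and hence $C$, embeds. One then still has to relate embeddability of $C$ in \emph{some} (possibly non-orientable) $3$-manifold to this, which is where the red edges re-enter: the red pattern records exactly the obstruction to the thickening being orientable, and one shows the thickening built from the generalised rotation system is a $3$-manifold by the same local analysis as in \autoref{is_manifold}, just tracking an extra $\Zbb/2$ of orientation data whose consistency is guaranteed by the face-evenness condition. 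I expect the bookkeeping for this orientation-twisting data — making precise ``glue the two cake slabs with a flip across a red edge'' and checking the result is a manifold at every vertex — to be the only real difficulty; everything else is a recombination of \autoref{prs_klein}, \autoref{is_manifold}, \autoref{combi} and \autoref{general}.
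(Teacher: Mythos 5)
First, a point of comparison: the paper does not prove this theorem at all --- it is stated as a result of Skopenkov and only cited, so there is no in-paper argument to measure your proposal against. Judged on its own terms, your cycle of implications is the natural one, and two of its three legs are essentially correct. The step from a thickening to a PL embedding is indeed definition-unwinding, and your derivation of a generalised planar rotation system from an embedding in a 3-manifold is sound: choose local orientations along a spanning tree as in \autoref{prs_klein}, declare an edge red when the orientations at its endvertices clash, and observe that a regular neighbourhood of a face is orientable (the face is a disc), so the orientation sign of the vertices read along the boundary trail of a face changes an even number of times. One small slip: when a link graph is disconnected, the vertex neighbourhood is still a single ball with the disconnected link graph drawn on its boundary sphere, not a disjoint union of balls (cf.\ the footnote to \autoref{obo}); this matters because the cone over a disjoint union of spheres is not a manifold at the cone point.

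The genuine gap is in the implication from a generalised planar rotation system to a manifold thickening, which is the substance of the theorem. Your primary route --- glue vertex balls, edge cylinders and face slabs, with a normal flip across each red edge, and use the face-evenness condition to see that the interval bundle over the boundary of each face has trivial monodromy and hence that the face can be thickened to an untwisted slab $f\times[-1,1]$ --- is the right picture, but you explicitly defer exactly this verification (``I expect the bookkeeping \dots to be the only real difficulty''), and that verification \emph{is} the theorem. Your ``cleaner alternative'' via \autoref{general} and \autoref{combi} does not get off the ground: local surfaces, the local-surface-equivalence relation and the space $T(C,\Sigma)$ are all defined only for a rotation system, that is, a globally consistent choice of cyclic orientation $\sigma(e)$ at every edge, and the red edges are precisely the obstruction to making such a choice. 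So there are no local surfaces to attach discs to until the red edges have been resolved, which makes that route circular as stated. To produce a written-out proof you would need to carry out the local gluing argument in full --- a genuine non-orientable analogue of \autoref{is_manifold} --- or, as the paper does, simply cite Skopenkov.
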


\begin{rem}
For 2-dimensional manifolds there are two types of rotation systems: one for the orientable case 
and one for the general case. In dimension three the situation is analoguous. For orientable 
3-manifolds we have `planar rotation systems' and `generalised planar 
rotation systems' for general 3-manifolds. 
\end{rem}

\begin{rem}
Similarly as for planar rotation systems, the methods of this series of papers can also be used to 
also give a polynomial time algorithm that verifies the existence of generalised planar rotation 
systems. As mentioned in \autoref{trivial} the locally 3-connected case is algorithmically trivial 
and the reduction to the locally 3-connected case works the same as for planar rotation systems. 
\end{rem}

\begin{rem}
The methods of this series of papers can be used to 
characterise the existence of generalised planar rotation systems in terms of excluded minors as 
follows.

In the locally 3-connected case the list of excluded minors for generalised planar rotation 
systems is slightly shorter than for planar rotation systems. It still contains 
the obstructions in the list $\Zcal_1$ like the cone over $K_5$, and also more complicated 
obstructions such as that explained in {\cite[\autoref*{other_construction}]{3space1}}. But from 
the list $\Zcal_2$ this list just contains those obstructions  
that are derived from those in {\cite[\autoref*{equal}]{3space1}} where the vertices $v$ and 
$w$ are joined by an edge.

Indeed arguing as in the proof of {\cite[\autoref*{rot_system_exists}]{3space1}}, we find an 
obstruction sitting either at a vertex, an edge or a 
face. In the first two cases we just get obstructions in $\Zcal_1$. In the third case we can 
contract two edges of that face and we only get those obstructions $C\in\Zcal_2$ such that the 
loop 
$\ell$ bounds a face. In this case in the strict marked graph at 
the link graph of the contraction vertex of $C$,  the vertices $v$ and $w$ corresponding to the 
loop are joined by an edge.

The other obstructions in the list $\Zcal_2$ are still necessary obstructions for embeddability in 
the 3-sphere  but the above shows that under the assumption of simply connectedness and local 
3-connectedness, we do not need them to characterise embeddability in the 3-sphere.

In the general case, however, generalised planar rotation systems need also some new obstructions 
compared to planar rotation systems. Indeed, instead of the torus crossing obstructions we now get 
the following.

It is straightforward to check that the torus crossing obstructions, where the pair (a,b) of 
winding numbers is 
different from (1,2), still give obstructions. Thus it remains to understand the obstructions 
arising from para-cycles all of whose winding numbers are one or two. 
Here additionally we have Klein Bottle crossing 
obstructions. 
These are defined similarly as torus crossing obstructions but here we partition the set 
into more than two equivalence classes. These new equivalence classes have winding numbers (1,2,2) 
or (1,1,1,2). 

Indeed, given a para-cycle all of whose winding numbers are one or two, we take a single letter for 
each equivalence class of winding number one, and for each equivalence class of size two we take a 
pair of letters $(n,n')$. 
Then the para-cycle has a generalised planar rotation system if and only if its letters form a word 
that uses each letter precisely once such that if we exchange $n$ and $n'$ for every 
pair, the resulting word is equal to the original word or its reverse.
For (1,2) there is such a word, namely: $1X1'$. For (1,1,2), such a word is $Y1X1'$.
So (1,2) and (1,1,2) do not lead to obstructions. All other families 
of winding numbers from $\{1,2\}$ of size at least three do lead to obstructions. For that it 
suffices to show that for the (minimal) families (1,2,2) and (1,1,1,2) there are no such words. 
This is an 
easy exercise\footnote{To give details, for (1,2,2) we have the letters $X,1,1',2,2'$. There is a 
unique word satisfying the requirements if we delete 
the letter $X$. It is $121'2'$, up to symmetry. It is straightforward to check that this word 
cannot be extended to a legal word by adding $X$. 

For (1,1,1,2) the letters are $X,Y,Z,1,1'$. By symmetry we may assume that $XY1$ is a subinterval. 
But this is not contained in any legal word.}

To summarise, for the Kuratowski Theorem characterising 2-complexes with generalised planar 
rotation systems the list of excluded minors is the following.
It consists of the slightly shorter list for the locally 3-connected case described above together 
with the torus crossing 
obstructions except for (1,2). Additionally we have the Klein Bottle crossing obstructions (1,2,2) 
and (1,1,1,2). 
\end{rem}

   \section*{Acknowledgement}

I thank Arnaud de Mesmay for useful discussions that led to \autoref{non-or}.

\bibliographystyle{plain}
\bibliography{literatur}

\end{document}